\tikzset{mynode/.style={draw,circle,inner sep=2pt,outer sep=0pt}}
\newtheorem{thm}{Theorem}[section]
\newtheorem{cor}[thm]{Corollary}
\newtheorem{lem}[thm]{Lemma}
\newtheorem{prop}[thm]{Proposition}
\newtheorem*{thm*}{Theorem}
\newtheorem*{conj*}{Conjecture}
\theoremstyle{definition}
\newtheorem{defn}[thm]{Definition}
\newcommand*{\ve}{\varepsilon}
\newcommand*{\ga}{\ga}
\newcommand{\defeq}{\vcentcolon=}
\newcommand{\overbar}[1]{\mkern 1.5mu\overline{\mkern-1.5mu#1\mkern-1.5mu}\mkern 1.5mu}
\newcommand{\ltens}{
	\otimes^{\textbf{L}}
}
\newcommand{\rhom}{
	\textbf{R}\Hom
}
\newcommand{\opcat}[1]{
	{#1}^{\mathrm{op}}
}
\DeclareMathOperator{\rad}{rad}
\DeclareMathOperator{\soc}{soc}
\DeclareMathOperator{\cone}{cone}
\DeclareMathOperator{\id}{id}
\DeclareMathOperator{\Ext}{Ext}
\DeclareMathOperator{\End}{End}
\DeclareMathOperator{\Hom}{Hom}
\DeclareMathOperator{\Id}{Id}
\DeclareMathOperator{\Perf}{Perf}
\DeclareMathOperator{\stmod}{-\underline{mod}}
\DeclareMathOperator{\Ch}{Ch}
\DeclareMathOperator{\modcat}{-mod}
\DeclareMathOperator{\cadd}{-add}
\DeclareMathOperator{\proj}{-proj}
\DeclareMathOperator{\SL}{SL}
\newcommand{\bbimder}[2]{
	D^b({#1}\text{-}{#2})
}
\newcommand{\bbimcomp}[2]{
	\Ch^b({#1}\text{-}{#2})
}
\title{Strongly Periodic Modules and Perverse Autoequivalences}
\author{Alfred Dabson}
\date{}
\begin{document}
	
	\maketitle
	
	\begin{abstract}
		We introduce a notion of \emph{strong periodicity} of a module over a finite-dimensional algebra over a field. We prove that the existence of such modules over certain idempotent algebras is both a necessary and sufficient condition for the existence of a two-step self-perverse equivalence of a finite-dimensional algebra. We survey some applications to the setting of the symmetric groups.
	\end{abstract}

	\section{Introduction}
	
	Perverse equivalences, as introduced by Chuang and Rouquier in \cite{chuang-rouquier_perverse17}, are a class of derived equivalences, filtered by shifted Morita equivalences. This built on work in \cite{chuang-rouquier_sl2cat}, in which the authors defined derived equivalences arising from $\mathfrak{sl}_2$-categorification, equivalences that are themselves perverse. Chuang and Rouquier's work resolved Brou\'{e}'s Abelian Defect Group Conjecture \cite{broue_conjecture} for blocks of symmetric groups, highlighting the significance of perverse equivalences in the representation theory of finite groups. This has been explored further by Craven and Rouquier, \cite{craven_perverse} and \cite{craven-rouquier_perverse}, proving for some blocks of sporadic groups and finite groups of Lie type in non-defining characteristic that conjectured derived equivalences realising Brou\'{e}'s conjecture coming from Deligne-Lusztig varieties exist and are perverse. Independently, Wong \cite{wong_SL2defining} has exploited perverse equivalences to resolve Brou\'{e}'s conjecture for blocks of $\SL_2(q)$ in defining characteristic in a novel way.
	
	The significance of perverse equivalences is by no means limited to the representation theory of finite groups. As the name might suggest, Chuang and Rouquier's constructions are are intimately related to the gluing of perverse sheaves in geometry \cite{BBD_perverse}, and in certain cases, \cite{anno-bezrukavnikov-mirkovic}, can be described in terms of Bridgeland stability conditions \cite{bridgeland-stability}, though this aspect is as of yet poorly understood.
	
	Perverse equivalences have a natural combinatorial flavour, making them far easier to work with than general derived equivalences. Moreover, unlike a general derived equivalence, a perverse equivalence between two algebras defines a stratified bijection between their sets of simple modules.
	
	The class of two-step perverse equivalences, for which filtration is of length two, can be considered the closest derived equivalences to a Morita equivalence. We call the magnitude $d$ of the singular shift the \emph{width} of the equivalence. Two-step perverse equivalences of width 1 are combinatorial or standard Okuyama-Rickard tilts \cite{okuyama_method}, examples of which include Kauer moves on Brauer graph algebras \cite{kauer_brauergraphs}. Iterated applications of combinatorial tilts produce two-step equivalences of larger width; it remains unknown if other sources of two-step perverse equivalences of width $d$ exist.
	
	Among the more compelling studies of two-step perverse equivalences comes from Grant \cite{grant_periodic}, \cite{grant_thesis}, who proves that, for symmetric algebras, periodic and relatively periodic idempotent algebras give rise to two-step perverse autoequivalences of width equal to the period of the idempotent algebra. In particular, if $k$ is a field and $A$ is a finite-dimensional symmetric $k$-algebra, $P$ a projective $A$-module, $E=\opcat{\End_A(P)}$ and $\sigma$ an automorphism of $E$, then if $E$ is $\sigma$-periodic of period $n$ relative to some symmetric subalgebra $B$ of $E$ and $P$ is a projective right $B$-module, then there is a two-step perverse autoequivalence \begin{tikzcd}[cramped,sep=small]
		\Psi_P: D^b(A) \arrow[r,"\sim"] & D^b(A)
	\end{tikzcd} of width $n$. Moreover, $\Psi_P$ coincides with the $n$th iterated application of a combinatorial tilt of $A$, producing a circle of two-step perverse equivalences of width 1, starting and ending at $A$. This provides a concrete link to geometric phenomena, specifically the spherical and $\mathbb{P}^n$-twists of Seidel and Thomas \cite{seidel-thomas_spherical} and Huybrechts and Thomas \cite{huybrechts-thomas_Pn} respectively.
	
	The work in this document was born out of a desire to better understand the relationship between periodicity and perversity, and in particular to strengthen Grant's result into an if-and-only if statement. 
	
	In Sections \ref{sec:background} and \ref{sec:periodicmodulesalgebras}, we present the relevant background, recounting Chuang and Rouquier's definition of perverse equivalences between finite-dimensional algebras and some significant first consequences, before a brief dip into periodicity, primarily to describe more fully Grant's result.
	
	Section \ref{sec:main} provides the mathematical core of this document. We introduce a notion of \emph{strong periodicity} of a module, by which an $E$-module $M$ is strongly $\sigma$-periodic of period $n$, for $\sigma$ an automorphism of $E$ and $n \in \mathbb{Z}_+$, if there is an $\alpha \in \Ext^n_{E \otimes_k \opcat{E}}(E,{}_{\sigma\!}E)$ such that $\alpha \ltens_E M$ induces an isomorphism $\Omega_E(M) \cong {}_{\sigma\!}M$. We have our first key result, Theorem \ref{thm:perversetoperiodic}.
	
	\begin{thm*}
		If $A$ is a finite-dimensional, symmetric $k$-algebra and \begin{tikzcd}[cramped,sep=small]
			\Phi:D^b(A) \arrow[r,"\sim"] & D^b(A)
		\end{tikzcd} is a two-step perverse equivalence of width $n$ satisfying a natural restriction condition, then there are projective $A$-modules $P$ and $Q$ such that, with $E=\opcat{\End_A(P)}$, the $E$-module $M=\Hom_A(P,Q)$ is strongly $\sigma$-periodic and the $\opcat{E}$-module $M^\vee$ is strongly $\sigma^{-1}$-periodic, both of period $n$, relative to some $\alpha\in \Ext^n_{E \otimes_k \opcat{E}}(E,{}_{\sigma\!}E)$.
	\end{thm*}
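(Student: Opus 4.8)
The plan is to reverse-engineer Grant's construction. First I would extract from $\Phi$ the idempotent data and the twisting automorphism $\sigma$; then exhibit a periodic projective resolution of $M$ by transporting a tilting complex for $\Phi$ along $\Hom_A(P,-)$; then produce the governing bimodule class $\alpha$ from the associated two-sided tilting complex; and finally use the symmetric structure of $A$ to deduce the statement for $M^\vee$ from that for $M$.

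\emph{Extracting the data.} A two-step perverse equivalence of width $n$ comes with a partition of the simple $A$-modules into a fixed part $\mathcal S_0$ and a moved part $\mathcal S_1$. Let $e,f\in A$ be corresponding sums of primitive idempotents with $1=e+f$, and put $P:=Ae$, $Q:=Af$, $E:=\opcat{\End_A(P)}\cong eAe$, and $M:=\Hom_A(P,Q)\cong eAf$, an $E$-module via precomposition. Since $A$ is symmetric, the corner $eAe$ is again symmetric, so $E$ is symmetric; in particular $\opcat E\cong E$ and $M^\vee\cong\Hom_A(Q,P)\cong fAe$. The \enquote{natural restriction condition} should be exactly what forces $\Phi$, restricted to the fixed part, to be a shifted Morita self-equivalence, and reading off the resulting self-bijection of $\mathcal S_0$ together with its twist yields the automorphism $\sigma\in\mathrm{Aut}(E)$.

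\emph{The periodic resolution and the class $\alpha$.} Let $T=P\oplus T_f$ be a tilting complex inducing $\Phi$, with $T_f$ the moved summand. By the explicit shape of the two-step construction recalled in Section~\ref{sec:background}, $T_f$ is a bounded complex of projective $A$-modules whose non-extremal terms lie in $\mathrm{add}(P)$, and — because $\Phi$ is a \emph{self}-equivalence with top layer shifted by $n$ — whose cohomology is concentrated in two degrees, one a module built from $Q$ and the other a $\sigma$-twist of it. Applying the exact functor $\Hom_A(P,-)=e\cdot(-)$, which sends the non-extremal terms of $T_f$ to projective $E$-modules, produces a length-$n$ complex exhibiting $\Omega^n_E(M)\cong{}_\sigma M$. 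For the \emph{strong} periodicity I would instead pass to the two-sided tilting complex $X\in D^b(A\otimes_k\opcat A)$ with $X\ltens_A(-)\simeq\Phi$ (available because $A$ is symmetric): its restriction $eXe\in D^b(E\otimes_k\opcat E)$ is then quasi-isomorphic to a length-$n$ projective bimodule resolution of $E$ terminating in ${}_\sigma E$, and I take $\alpha\in\Ext^n_{E\otimes_k\opcat E}(E,{}_\sigma E)$ to be its Yoneda class — it lies in degree exactly $n$ by the width hypothesis and is concentrated at two cohomological degrees by the two-step hypothesis. Verifying the identity $eXf\simeq eXe\ltens_E M$ then shows $\alpha\ltens_E M$ induces the required isomorphism $\Omega^n_E(M)\cong{}_\sigma M$.

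\emph{The dual statement and the main obstacle.} Since $A$ is symmetric, $k$-duality carries $\Phi$ to $\Phi^{-1}$, whose two-sided tilting complex is $X^{-1}\simeq\rhom_A(X,A)$; the inverse of a two-step perverse equivalence of width $n$ is again one, with the inverse self-bijection of simples and twist $\sigma^{-1}$. Running the previous step for $\Phi^{-1}$ — equivalently, dualising $eXe$ — shows $M^\vee$ is strongly $\sigma^{-1}$-periodic of period $n$, governed by the Yoneda class of $eX^{-1}e$. The hard part will be to identify this class with $\alpha$ itself, under the canonical isomorphism $\Ext^n_{E\otimes_k\opcat E}(E,{}_\sigma E)\cong\Ext^n_{\opcat E\otimes_k E}(\opcat E,{}_{\sigma^{-1}}\opcat E)$ obtained by interchanging the tensor factors: this amounts to showing $eXe$ is self-dual up to the twist, and it is precisely here that the symmetric hypothesis on $A$ (hence on $E$) is indispensable. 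I expect this compatibility to be the principal obstacle; once it is established, $M$ and $M^\vee$ are strongly periodic relative to one and the same $\alpha$. The remaining verifications — that $e$ times a fixed projective is an $E$-projective, the identity $eXf\simeq eXe\ltens_E M$, and that the cohomology concentrations follow from perversity — are routine bookkeeping with the material of Sections~\ref{sec:background} and~\ref{sec:periodicmodulesalgebras}.
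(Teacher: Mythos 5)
Your general framing — corner the two-sided tilting complex $X$ of the standard equivalence $\Phi$ at the idempotent $e$ with $P=Ae$, $Q=Af$, $E\cong eAe$, $M\cong eAf$, and use the symmetry of $A$ and $E$ to handle $M^\vee$ — does point in the paper's direction, but several of your intermediate claims are wrong and, more importantly, you are missing the device that makes the argument go.

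The crucial missing ingredient is the natural transformation $\Id_{D^b(A)}\to\Phi$ restricting to an isomorphism on $\mathcal A_1$. By Yoneda this is a morphism $h\colon A\to X$ in $\bbimder{A}{A}$, and the paper builds everything out of the triangle $Z\to A\xrightarrow{\,h\,} X\rightsquigarrow$. You invoke the restriction condition only vaguely (``forces $\Phi$ on the fixed part to be a shifted Morita self-equivalence'') and never use $h$ explicitly; as a result your construction of $\alpha$ is off. Applying $P^\vee\ltens_A-\ltens_A P$ to that triangle yields $Y\to E\to P^\vee\ltens_A X\ltens_A P\rightsquigarrow$; perversity (via Proposition~\ref{prop:rightmodperverse} and the self-equivalence hypothesis) gives $P^\vee\ltens_A X\ltens_A P\cong{}_{\sigma\!}E[n]$, and $\alpha$ is precisely the morphism $E\to{}_{\sigma\!}E[n]$ in this triangle, i.e.\ the image of $h$. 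Your claim that $eXe$ is ``quasi-isomorphic to a length-$n$ projective bimodule resolution of $E$ terminating in ${}_\sigma E$'' whose Yoneda class is $\alpha$ is false: $eXe\cong{}_{\sigma\!}E[n]$ is a single shifted bimodule, not a resolution (and a complex whose cohomology realizes an exact sequence would be acyclic). The paper also never asserts that the cone $Y$ is a projective bimodule complex; it sidesteps this entirely via Lemma~\ref{lem:strongperiodic}, which reduces strong periodicity of $M$ to showing $Y\ltens_E M$ is perfect in $D^b(E)$.

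That perfection, in turn, hinges on showing $Z\in\langle P^\vee\rangle$ as an object of $D^b(\opcat A)$, which is where the restriction condition does its work: $h\ltens_A V$ is an isomorphism for $V\in D^b_{\mathcal A_1}(A)$, so $Z\ltens_A V=0$, and Lemma~\ref{lem:ZinthickP} then gives $Z\in\langle P^\vee\rangle$. Your ``routine bookkeeping'' label hides exactly this lemma and its dual, which are not routine. Finally, you flag as ``the principal obstacle'' the task of matching the class for $M$ with the class for $M^\vee$ by proving a self-duality of $eXe$; in the paper this is a non-issue, because both strong periodicities are read off from the one triangle $\nabla\colon Y\to E\xrightarrow{\alpha}{}_{\sigma\!}E[n]\rightsquigarrow$, by showing respectively that $Y\ltens_E M$ and $M^\vee\ltens_E Y$ are perfect — the same $\alpha$ appears by construction. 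So the obstacle you anticipate is an artifact of not having $h$ in hand. In short: same raw materials, but you need to take $\alpha$ to be the corner of $h$, not a Yoneda class of a putative resolution, and the load-bearing step is the $Z\in\langle P^\vee\rangle$ argument, not a self-duality of $eXe$.
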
	
	
	Working more directly within the derived category allows us to prove a converse, encompassing Grant's earlier result. Given a basic algebra $A$ and projective $A$-modules $P$ and $Q$ such that $A \cong P \oplus Q$ as $A$-modules, set $E=\opcat{\End_A(P)}$ and $M=\Hom_A(P,Q)$. Suppose that $M$ is a strongly periodic $E$-module of period $n$ and $M^\vee$ is a strongly periodic $\opcat{E}$-module of period $n$, with the periodicity arising from the same $\alpha \in \Ext^n_{E \otimes_k \opcat{E}}(E,{}_{\sigma\!}E)$. We construct an endofunctor \begin{tikzcd}[cramped, sep=small]
		\Phi_P: D^b(A) \arrow[r] & D^b(A)
	\end{tikzcd} from $\alpha$, called the \emph{generalised periodic twist} at $P$. We have the following, Theorem \ref{thm:periodictoperverse}.
	
	\begin{thm*}
		The generalised periodic twist $\Phi_P$ is a two-step perverse equivalence of width $n$.
	\end{thm*}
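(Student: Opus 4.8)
The plan is to check directly the defining conditions of a two-step perverse self-equivalence, as recalled in Section~\ref{sec:background}, relative to the filtration of the module category of $A$ whose unique proper term is the Serre subcategory $\mathcal S$ generated by the simple modules ``at $Q$'' --- those $S$ with $\Hom_A(P,S)=0$ --- equipped with the perversity function taking value $0$ on $\mathcal S$ and $n$ on the top stratum. Unwound, this requires: (1)~$\Phi_P$ is an autoequivalence of $D^b(A)$; (2)~$\Phi_P$ preserves $D^b_{\mathcal S}(A)$, is $t$-exact there, and induces an equivalence on $\mathcal S$; (3)~for every simple $T$ at $P$, the homology $H^j(\Phi_P(T))$ lies in $\mathcal S$ for all $j$ except one (namely $j=-n$ in our grading convention), and $T\mapsto H^{-n}(\Phi_P(T))$ induces, modulo $\mathcal S$, an equivalence from the top subquotient of the source filtration to that of the target. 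We will see that the equivalence in (2) is in fact trivial ($\Phi_P(S)\cong S$ for $S$ at $Q$) and that the equivalence in (3) is the Morita self-equivalence of the top subquotient induced by the automorphism $\sigma$; this last point is the role $\sigma$ plays in the resulting stratified bijection on simples, and the width is $n$ exactly because the period is $n$.

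For (1) I would exhibit a quasi-inverse. By the construction of the generalised periodic twist, $\Phi_P\simeq -\ltens_A T_\alpha$ for a bounded complex $T_\alpha$ of $A$-$A$-bimodules, projective on each side, built from $\alpha\in\Ext^n_{\envelop E}(E,{}_{\sigma\!}E)$ by splicing $A$ with the complex induced up to $A$ (via $Ae\otimes_E(-)\otimes_E eA$) from the degree-$n$ truncation of a projective $\envelop E$-resolution of $E$. Since $M^\vee$ is strongly $\sigma^{-1}$-periodic with respect to the same $\alpha$, one builds the companion complex $T'_\alpha$ and checks $T_\alpha\ltens_A T'_\alpha\cong A\cong T'_\alpha\ltens_A T_\alpha$ in $D^b(\envelop A)$; the ``same $\alpha$'' compatibility is precisely what makes the two gluing maps cancel, the key input being that $\alpha\ltens_E M$ and $\alpha\ltens_{\opcat E}M^\vee$ are mutually inverse realisations of the periodicity isomorphisms $\Omega_E(M)\cong{}_{\sigma\!}M$ and $\Omega_{\opcat E}(M^\vee)\cong{}_{\sigma^{-1}\!}M^\vee$. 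One could instead verify Rickard's criterion for $T_\alpha$ by transporting the $\Hom$-vanishing to $D^b(E)$, but the hypothesis on $M^\vee$ appears to be tailored to the first route.

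Condition (2) is then essentially immediate: $\Phi_P$ sits in a functorial triangle $C(X)\to X\to\Phi_P(X)\to C(X)[1]$ in which $C(X)$ is assembled from the object $\rhom_A(P,X)\cong\Hom_A(P,X)$ of $D^b(E)$, and this vanishes whenever $X$ is at $Q$ because $P$ is projective; hence $\Phi_P(S)\cong S$ there and $\Phi_P$ restricts to the identity on $D^b_{\mathcal S}(A)$. Condition (3) is the heart of the proof and the place where strong periodicity is indispensable. For $T$ simple at $P$, $\Hom_A(P,T)$ is a simple $E$-module and $C(T)$ becomes a length-$n$ truncated projective $A$-resolution glued to itself through $\alpha$, so the homology of $\Phi_P(T)$ in each degree is read off from the $E$-module syzygies of this data; because $M$ is strongly $\sigma$-periodic those syzygies ``close up'' after exactly $n$ steps, the closure being the isomorphism $\Omega^n_E(M)\cong{}_{\sigma\!}M$ realised by $\alpha$. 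The \emph{main obstacle} --- and precisely what fails for generic, non-periodic data --- is to show that no simple at $P$ survives in $H^j(\Phi_P(T))$ for $0\le j<n$: periodicity forces the wrap-around to be clean, leaving a single composition factor outside $\mathcal S$, in degree $-n$, namely the $\sigma$-image of $T$, which yields the width-$n$ perversity and the $\sigma$-indexed stratified bijection. Finally, $\Phi_P$ being an autoequivalence, the source and target filtrations coincide, so no separate check is needed on the target side; specialising to $E$ symmetric and periodic, $M=\Hom_A(P,Q)$, and $\alpha$ the periodicity class of $E$ recovers Grant's theorem, and combined with Theorem~\ref{thm:perversetoperiodic} this gives the equivalence announced in the introduction.
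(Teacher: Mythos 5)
Your plan diverges from the paper in two substantive ways, and each divergence introduces gaps that the paper's own argument is designed to avoid.

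First, to show $\Phi_P$ is an equivalence, you propose constructing an explicit quasi-inverse $T'_\alpha$ from the dual strong-periodicity of $M^\vee$ and verifying $T_\alpha\ltens_A T'_\alpha\cong A\cong T'_\alpha\ltens_A T_\alpha$. The paper instead follows Ploog's approach: it shows $X$ is perfect in $D^b(A)$ and $D^b(\opcat{A})$ (Lemma~\ref{lem:Xisperfect}, which is precisely where the $M^\vee$ hypothesis enters, to secure perfectness on the right), so $X^\vee\ltens_A-$ supplies both adjoints; then it applies Bridgeland's fully-faithfulness criterion (Theorem~\ref{thm:bridgeland}) to the spanning class $\{P\}\cup P^\perp$ by computing $\Phi_P(P)\cong P[n]$ and $\Phi_P(V)\cong V$ for $V\in P^\perp$; finally it shows $\Phi_P|_{\Perf(A)}$ is essentially surjective using the fact that $P\ltens_E Y\ltens_E M$ lies in $\langle P\rangle$ because $Y\ltens_E M$ is perfect over $E$, and invokes Rickard to lift back to $D^b(A)$. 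The cancellation ``the same $\alpha$ makes the gluing maps cancel'' that your route rests on is not spelled out, and it is not obviously clean: one would have to control the composite $T_\alpha\ltens_A T'_\alpha$ at the level of bimodule complexes, which is a non-trivial gluing computation. The paper never needs to perform it. Moreover, your description of $T_\alpha$ as being built by ``splicing $A$ with the complex induced up to $A$ from the degree-$n$ truncation of a projective $\envelop E$-resolution of $E$'' implicitly assumes $Y$ is perfect over $\envelop{E}$ (i.e.\ that $E$ itself is (relatively) $\sigma$-periodic). That is Grant's hypothesis, not the paper's: in the generalised setting the only inputs are the perfectness of $Y\ltens_E M$ over $E$ and of $M^\vee\ltens_E Y$ over $\opcat E$, and $Y$ is just the fibre of $\alpha$ in $\bbimder{E}{E}$, not a truncated $\envelop E$-projective resolution. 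If you carry that assumption you will only recover Grant's theorem, not the stated generalisation.

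Second, for the perversity check you propose verifying the simple-module criterion (Proposition~\ref{prop:perversesimp}) on simples at $P$, arguing that ``the homology of $\Phi_P(T)$ in each degree is read off from the $E$-module syzygies of this data; because $M$ is strongly $\sigma$-periodic those syzygies close up after $n$ steps.'' The paper instead uses the projective-module criterion (Proposition~\ref{prop:perverseproj}), which is much more direct here: the computation $\Phi_P(P)\cong P\ltens_E E_{\sigma^{-1}}[n]$ from Proposition~\ref{prop:psionP} gives $\Phi_P(P_i)\cong P_{\sigma^{-1}(i)}[n]$ on the nose for each summand $P_i$ of $P$, and applying $-\ltens_A P_j$ to the triangle $\Delta$ for a summand $P_j$ of $Q$ shows $\Phi_P(P_j)$ is a complex with $P_j$ in degree $0$ and all other terms in $\langle P\rangle$. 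Your route is harder precisely because strong periodicity is a statement about $\Omega_E^n(M)\cong{}_{\sigma\!}M$, not about the simple $E$-modules $\Hom_A(P,T)$. The complex $Y\ltens_E\Hom_A(P,T)$ is the fibre of a map of simples (twisted, shifted), and nothing in the hypotheses makes it perfect over $E$ or forces its homology after tensoring up to $A$ to avoid simples at $P$ in intermediate degrees; your ``wrap-around is clean'' claim is exactly the assertion needing proof, and it is not supplied. The paper's choice to check perversity on projectives eliminates this difficulty entirely. Your treatment of condition (2), that $\Phi_P$ fixes $P^\perp$, matches Proposition~\ref{prop:psionperp} and is fine.
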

	
	We then demonstrate that, as in Grant's case, the equivalence $\Phi_P$ can be realised as the inverse of an $n$th iterated combinatorial tilt, producing a cycle of length $n$ of perverse equivalences between different algebras. One can follow this cycle from any starting point to produce a two-step perverse autoequivalence of width $n$, corresponding to our construction.
	
	Finally, in Section \ref{sec:symmgroups}, we look at two applications of our result to blocks of the symmetric groups, one well-known example occurring in the principal block of $\mathfrak{S}_6$ in characteristic 3, and one more surprising example in the principal block of $\mathfrak{S}_8$ in characteristic 3. This class of algebra, and the broader class of blocks of Iwahori-Hecke algebras of type $A$, appears to be a fruitful source of interesting equivalences, which may be worth exploring further.
	
	Our methods establish a firm connection between perverse equivalences and the (twisted) Hochschild cohomology of idempotent algebras. Work of Keller \cite{keller_DGHochschildinv}, \cite{keller_DGHochschildinv} indicates the possibility of extending these results to the differential graded setting, and that this may in fact be a more natural point of view to take. We leave this as tantalising potential future work.

	\section{Background}\label{sec:background}
	
	Throughout, $k$ is an algebraically closed field, $A$ is a finite-dimensional $k$-algebra, and all $A$-modules are finitely generated and assumed to be left modules, unless otherwise stated. Given a second $k$-algebra $B$, we assume that $k$ acts centrally on $A$-$B$-bimodules, and we freely identify $A$-$B$-bimodules with $A \otimes_k \opcat{B}$-modules.
	
	If $M$ is an $A$-module, then we denote by $M^\ast = \Hom_k(M,k)$ the $k$-linear dual of $M$, and by $M^\vee = \Hom_A(M,A)$ the $A$-linear dual of $M$. Both $M^\ast$ and $M^\vee$ are right $A$-modules. If $B$ is a second $k$-algebra and $M$ is an $A$-$B$-bimodule, then $M^\ast$ and $M^\vee$ are $B$-$A$-bimodules.	
	
	We denote by $A\modcat$ the ($k$-linear, abelian) category of all (finitely generated, left) $A$-module and by $\Ch^b(A)$ the (abelian) category of chain complexes of $A$-modules. We use a homological grading convention for chain complexes. We further denote by $D^b(A)$ the (triangulated) bounded derived category of $A\modcat$. Given a second $k$-algebra $B$, we will denote by $\bbimder{A}{B}$ the bounded derived category of complexes of $A$-$B$-bimodules. 
	
	Given a collection of objects $\mathcal{X}$ in $A\modcat$, we denote by $\mathcal{X}\cadd$ the full additive subcategory of $A\modcat$ whose objects are isomorphic to summands of direct sums of objects in $\mathcal{X}$. For example, we have $A\cadd = A\proj$, the (additive) category of all (finitely generated, left) projective $A$-modules. Finally, if $\mathcal{C}$ is an additive category, we denote by $K^b(\mathcal{C})$ the (triangulated) bounded homotopy category of $\mathcal{C}$.

	\subsection{Symmetric algebras}\label{subsec:symmalg}

	Let $A$ be a $k$-algebra. We say that $A$ is \emph{symmetric} if there is an isomorphism of $A$-$A$-bimodules $A \cong A^\ast$. 
	
	We have the following equivalent definitions, \cite[Theorem 3.1]{rickard_symmderived}. 
	
	\begin{thm}\label{thm:symmetric}
		Let $A$ be a $k$-algebra. The following statements are equivalent.
		\begin{enumerate}[label=(\roman*)]
			\item The algebra $A$ is symmetric.
			\item There is a natural isomorphism of contravariant functors $\Hom_k(-,k) \cong \Hom_A(-,A)$	from $A\modcat$ to \emph{$\opcat{A}\modcat$}.
			\item Given $A$-modules $M$ and $P$ such that $P$ is projective, there is an isomorphism of $k$-vector spaces $\Hom_A(M,P) \cong \Hom_A(P,M)^\ast$, functorial in $M$ and $P$.
		\end{enumerate}
	\end{thm}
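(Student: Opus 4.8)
The statement to prove is Theorem~\ref{thm:symmetric}, the standard characterizations of symmetric algebras. Let me sketch a proof.

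\medskip

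The plan is to establish the cycle of implications $(i)\Rightarrow(ii)\Rightarrow(iii)\Rightarrow(i)$, exploiting the fact that all three statements are really shadows of the single bimodule isomorphism $A\cong A^\ast$.

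First, for $(i)\Rightarrow(ii)$: fix an isomorphism of $A$-$A$-bimodules $\phi\colon A\xrightarrow{\sim}A^\ast$, equivalently a symmetrizing linear form $\lambda\colon A\to k$ whose associated bilinear form $(a,b)\mapsto\lambda(ab)$ is nondegenerate and symmetric. For an $A$-module $M$ I would construct a natural map $\Hom_A(M,A)\to\Hom_k(M,k)$ by $f\mapsto\lambda\circ f$, and check it is natural in $M$ (clear, since it is post-composition) and bijective. Bijectivity can be verified first on $M=A$ (where it is precisely $\phi$, up to the identification $\Hom_A(A,A)\cong A$ and $\Hom_k(A,k)=A^\ast$), then on free modules $A^n$ by additivity, then on arbitrary finitely generated $M$ by taking a presentation $A^m\to A^n\to M\to0$ and applying the five lemma to the two exact sequences obtained by applying $\Hom_A(-,A)$ and $\Hom_k(-,k)$ — here I use that $\Hom_k(-,k)$ is exact and $\Hom_A(-,A)$ is left exact, which suffices to pin down the cokernel term. (Both functors are contravariant, so "presentation" becomes a left-exact sequence after applying them.)

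Next, $(ii)\Rightarrow(iii)$ is essentially formal. Assume the natural isomorphism $\nu\colon\Hom_A(-,A)\xrightarrow{\sim}\Hom_k(-,k)$. Given $A$-modules $M$ and $P$ with $P$ projective, I would chain together natural isomorphisms: $\Hom_A(M,P)\cong\Hom_A(M,A)\otimes_A P$ — no, better to avoid tensor subtleties and instead argue $\Hom_A(P,M)^\ast\cong\Hom_k(\Hom_A(P,M),k)$, and observe that for fixed projective $P$ the functor $M\mapsto\Hom_A(P,M)$ is exact and commutes with the relevant constructions, reducing to the case $P=A$ by additivity and the fact that a projective is a summand of a free module. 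When $P=A$ we have $\Hom_A(A,M)\cong M$ and $\Hom_A(M,A)$, so the claim $\Hom_A(M,A)\cong\Hom_A(A,M)^\ast=M^\ast$ is exactly $(ii)$ applied to $M$. Functoriality in both variables follows by tracking the isomorphisms, noting $(ii)$ gives functoriality in the first slot and additivity handles the $P$-slot.

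Finally, $(iii)\Rightarrow(i)$: take $M=P=A$ in statement $(iii)$. This yields a $k$-linear isomorphism $\Hom_A(A,A)\cong\Hom_A(A,A)^\ast$, i.e.\ $A\cong A^\ast$ as $k$-vector spaces, but the functoriality in $M$ (and a symmetric argument in $P$, using that $A$ is an $A$-$A$-bimodule) upgrades this to an isomorphism of $A$-$A$-bimodules; concretely, functoriality of the iso $\Hom_A(M,A)\cong\Hom_A(A,M)^\ast$ in $M$ under left multiplication maps $a\cdot(-)\colon A\to A$ encodes the left $A$-action, and functoriality in the $P$-variable (equivalently, precomposing with right multiplications) encodes the right action. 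I expect the main obstacle to be precisely this last bookkeeping step — promoting the $k$-linear isomorphism of $(iii)$ at $M=P=A$ to a genuine bimodule isomorphism by carefully exploiting the two-sided functoriality — together with making the reduction-to-free-modules arguments in $(i)\Rightarrow(ii)$ and $(ii)\Rightarrow(iii)$ genuinely natural rather than just pointwise. Since this is a well-known result (a reference to \cite[Theorem~3.1]{rickard_symmderived} is already given), I would keep these verifications brief.
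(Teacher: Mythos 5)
The paper does not give a proof of Theorem~\ref{thm:symmetric} at all; it simply cites it to \cite[Theorem~3.1]{rickard_symmderived} and moves on. So there is no in-paper argument for you to be compared against.

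Your sketch is correct and is essentially the standard proof one would write out for this fact. A few remarks on the places where you flagged hesitation. For $(i)\Rightarrow(ii)$ you should note that the map $f\mapsto\lambda\circ f$ being a map of \emph{right} $A$-modules (not merely $k$-linear) is precisely where symmetry of $\lambda$ is used: $(\lambda\circ f)\cdot a$ sends $m\mapsto\lambda(f(am))=\lambda(af(m))$, while $\lambda\circ(f\cdot a)$ sends $m\mapsto\lambda(f(m)a)$, and these agree because $\lambda(xa)=\lambda(ax)$; the five-lemma reduction to free modules then goes through as you describe since both functors are left exact contravariant. For $(iii)\Rightarrow(i)$ your concern about the bookkeeping is warranted but the argument does work: the only $A$-linear endomorphisms of the regular left module $A$ are the right multiplications $R_a$, and functoriality of the isomorphism $\Hom_A(A,A)\cong\Hom_A(A,A)^\ast$ in the $M$-slot against $R_a$ produces left $A$-linearity of the composite $A\cong\Hom_A(A,A)\to\Hom_A(A,A)^\ast\cong A^\ast$, while functoriality in the $P$-slot against $R_a$ produces right $A$-linearity, giving the bimodule isomorphism $A\cong A^\ast$. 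In short: your proof is sound and matches the standard route; the paper itself contributes nothing here to compare against beyond the citation.
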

	
	Further discussion of symmetric algebras can be found in \cite[Section 3]{rickard_symmderived}. An algebra $A$ is symmetric if and only if the opposite algebra $\opcat{A}$ is symmetric. Moreover, $A$ is symmetric if and only if every block of $A$ is symmetric. The property of an algebra being symmetric is preserved by Morita and derived equivalences.
	
	Let $A$ be a finite-dimensional, symmetric $k$-algebra and $P$ a projective $A$-module. Set $E=\opcat{\End_A(P)}$. It is a straightforward exercise to show that $E$ is also a finite-dimensional, symmetric $k$-algebra.

	\subsection{Derived equivalences}
	
	Let $A$ and $B$ be $k$-algebras, and $D^b(A)$, $D^b(B)$ their bounded derived categories. For a refresher on the basics of derived categories and equivalence, the reader is encouraged to see \cite[Chapter 10]{weibel_homalg}.
	
	We recall Rickard's Morita theory for derived categories, \cite[Theorem 6.4]{rickard_derivedmorita}. A \emph{one-sided tilting complex} for $A$ is an object $X$ of $K^b(A\proj)$ which satisfies:
	\begin{itemize}
		\item $\Hom_{D^b(A)}(X,X[i])=0$ whenever $i\neq 0$;
		\item $X\cadd$ generates $K^b(A\proj)$ as a triangulated category.
	\end{itemize}
	
	\begin{thm}\label{thm:derivedmorita}
		There is a derived equivalence \begin{tikzcd}[cramped,sep=small]F:D^b(A) \arrow[r,"\sim"] & D^b(B)\end{tikzcd} if and only if there is a one-sided tilting complex $X$ for $A$ such that $\opcat{\End_{D^b(A)}(X)} \cong B$. 	If such a tilting complex $X$ and equivalence $F$ exist, then $F(X) \cong B$ as $B$-modules.
	\end{thm}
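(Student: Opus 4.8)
The plan is to prove the two implications separately, treating the ``only if'' direction as essentially formal and concentrating on the construction in the ``if'' direction. I will use freely that $K^b(A\proj)$ coincides with the subcategory $\Perf(A) \subseteq D^b(A)$ of perfect complexes, that a triangle equivalence carries compact objects to compact objects and induces a $k$-algebra isomorphism on endomorphism rings, and that for a finite-dimensional $k$-algebra $C$ the category $D^b(C) = D^b(C\modcat)$ is the full subcategory of the unbounded derived category $D(C\Modcat)$ whose objects have bounded, finite-dimensional total homology. For the forward direction, given a derived equivalence $F\colon D^b(A)\to D^b(B)$, put $X = F^{-1}(B)$ with $B$ the rank-one free module. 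In $D^b(B)$ the object $B$ is perfect, has $\Hom_{D^b(B)}(B,B[i]) = 0$ for $i\neq 0$ and $\End_{D^b(B)}(B) = \opcat{B}$, and $B\cadd$ generates $K^b(B\proj)$ as a triangulated category; transporting each of these facts through $F^{-1}$ shows that $X$ is a tilting complex for $A$ with $\opcat{\End_{D^b(A)}(X)} \cong B$, while trivially $F(X) = F(F^{-1}(B)) = B$.

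The content is the converse. Fix a tilting complex $X$ for $A$ with $\opcat{\End_{D^b(A)}(X)}\cong B$, represented by an object of $\Ch^b(A\proj)$, and form its DG endomorphism algebra $\mathcal{E} = \End^\bullet_A(X)$, with composition of homogeneous maps as multiplication. The two tilting axioms say precisely that the homology of $\mathcal{E}$ is concentrated in degree $0$, where it equals $\End_{D^b(A)}(X)\cong\opcat{B}$. A DG algebra with homology concentrated in a single degree is formal, i.e.\ quasi-isomorphic as a DG algebra to its homology, by a standard truncation argument; hence $\mathcal{E}$ is quasi-isomorphic to $\opcat{B}$ and $D(\mathcal{E})\simeq D(\rmodcat{B})\simeq D(B\Modcat)$, the last step being $k$-linear duality matched to the opposite-algebra convention. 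Now $\rhom_A(X,-)\colon D(A\Modcat)\to D(\mathcal{E})$ is an equivalence of triangulated categories: this is the tilting theorem that a compact object which generates a compactly generated algebraic triangulated category reconstructs it as the derived category of its DG endomorphism algebra, and here $X$ is compact (being perfect) and generates $D(A\Modcat)$ because $X\cadd$ already generates $\Perf(A) = K^b(A\proj)$, which generates $D(A\Modcat)$. Composing yields an equivalence $\widetilde F\colon D(A\Modcat)\to D(B\Modcat)$ with $\widetilde F(X)\simeq\mathcal{E}\simeq B$.

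Finally, since $X$ is compact, $\widetilde F$ commutes with coproducts, so it is isomorphic to $-\ltens_A Y$ for a complex of $A$-$B$-bimodules $Y\cong\widetilde F(A)$; as $A\in D^b(A)$, its image $Y$ has bounded, finite-dimensional total homology, whence $-\ltens_A Y$ preserves that property in both directions and restricts to an equivalence $F\colon D^b(A)\to D^b(B)$ with $F(X)\cong\mathcal{E}\cong B$ as $B$-modules, which is the final assertion. I expect the only genuinely hard step to be the claim that $\rhom_A(X,-)$ is an equivalence onto $D(\mathcal{E})$ --- equivalently, Rickard's original construction of a two-sided tilting complex from $X$. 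Its proof reduces, via compatibility with shifts, cones and coproducts, to checking that the unit and counit of the adjunction $\bigl(X\ltens_{\mathcal{E}}-,\;\rhom_A(X,-)\bigr)$ are isomorphisms on the generator, where they hold by construction, and this is exactly where the hypothesis that $X\cadd$ generates $K^b(A\proj)$ is consumed. Since the article uses Theorem~\ref{thm:derivedmorita} only as a black box, I would cite this step rather than reprove it.
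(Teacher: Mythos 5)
The paper does not prove this statement: it is quoted verbatim from Rickard's Morita theory for derived categories and used as a black box, so there is no ``paper's own proof'' to compare against. Your argument takes the modern Keller-style route through the DG endomorphism algebra, which is genuinely different from Rickard's original proof; Rickard works inside $K^{-}(A\text{-Proj})$, extends a given tilting complex to a resolution functor by hand, and never mentions DG algebras. The DG approach is slicker once you grant Keller's reconstruction theorem (a compact generator of $D(A\Modcat)$ identifies it with the derived category of its DG endomorphism algebra), and your use of the two tilting axioms to show that $\mathcal{E}=\End^{\bullet}_A(X)$ has homology concentrated in degree $0$, hence is intrinsically formal, is exactly right. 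Your handling of the forward direction and of the descent from $D(A\Modcat)$ to $D^b(A)$ (compactness, then checking that $\rhom_A(X,-)$ and its inverse preserve bounded finite-dimensional total homology) is also sound, though the claim that a coproduct-preserving equivalence ``is isomorphic to $-\ltens_A Y$'' is itself a nontrivial theorem and is not actually needed: it is cleaner to note that $\Hom^{\bullet}_A(X,M)$ for $M$ in $D^b(A\modcat)$ is visibly a bounded finite-dimensional complex, and that the inverse equivalence is $X\ltens_{\mathcal{E}}-$.

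There is one real slip. You pass from $D(\mathcal{E})$ to $D(B\Modcat)$ by writing $D(\mathcal{E})\simeq D(\rmodcat{B})\simeq D(B\Modcat)$ ``by $k$-linear duality.'' But $\Hom_k(-,k)$ is \emph{contravariant}; it gives $D^b(\rmodcat{B})\simeq \opcat{D^b(B\modcat)}$, which is the wrong variance, and as a blanket claim ``$D(\rmodcat{B})\simeq D(B\Modcat)$'' would assert that every finite-dimensional algebra is derived equivalent to its opposite, which is not something you can take for granted. The fix is purely a matter of tracking sides correctly, and no duality is involved: $\rhom_A(X,-)$ takes values in \emph{right} DG $\mathcal{E}$-modules (because $\mathcal{E}$ acts by precomposition on the source), i.e.\ in $D(\text{Mod-}\mathcal{E})=D(\opcat{\mathcal{E}}\text{-Mod})$, and $\opcat{\mathcal{E}}$ is formal with $H^0(\opcat{\mathcal{E}})=\opcat{(\opcat{B})}=B$. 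Alternatively, follow the paper's convention and set $\mathcal{E}=\opcat{\End^{\bullet}_A(X)}$ from the outset, so that $\rhom_A(X,-)$ lands in left $\mathcal{E}$-modules and $H^0(\mathcal{E})=B$ directly. With that correction the argument is a clean, if convention-heavy, proof of Rickard's theorem.
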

	
	A \emph{standard derived equivalence} between $A$ and $B$ is one of the form
	\begin{center}
		\begin{tikzcd}[sep=small]
			X \ltens_A - : D^b(A) \arrow[r,"\sim"] & D^b(B),
		\end{tikzcd}
	\end{center}
	where $X$ is an object of $\bbimder{B}{A}$. The object $X$ of $\bbimder{B}{A}$ is a \emph{two-sided tilting complex} if $X$ induces a standard derived equivalence as above. 
	
	The following theorem, \cite[Corollary 3.5]{rickard_derivedfunctors}, tells us that we can always replace a derived equivalence by a standard derived equivalence, and it will behave the same on objects.
	
	\begin{thm}\label{thm:standardderived}
		If \begin{tikzcd}[cramped, sep=small]
			F : D^b(A) \arrow[r,"\sim"] & D^b(B)
		\end{tikzcd} is a derived equivalence, then there is a standard derived equivalence
		\begin{tikzcd}[cramped, sep=small]
			X \ltens_A -: D^b(A) \arrow[r,"\sim"] & D^b(B)
		\end{tikzcd} that agrees with $F$ on $A\proj$, and is such that $F(Y) \cong X \ltens_A Y$ for every object $Y$ of $D^b(A)$.
	\end{thm}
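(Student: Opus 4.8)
This is Rickard's theorem \cite[Corollary 3.5]{rickard_derivedfunctors}; I indicate the strategy. Since $F$ is an equivalence, $S \defeq F(A)$ is a perfect complex of $B$-modules, and triangulated functoriality turns the canonical map $\opcat{A} \to \End_{D^b(A)}(A)$ into a map $\opcat{A} \to \End_{D^b(B)}(S)$; this is an isomorphism, and $\Hom_{D^b(B)}(S, S[i]) \cong \Hom_{D^b(A)}(A, A[i]) = 0$ for $i \neq 0$, so $S$ is a one-sided tilting complex for $B$ with $\opcat{\End_{D^b(B)}(S)} \cong A$. The obstruction to writing $F$ as $X \ltens_A -$ is that this $\opcat{A}$-action lives only in $D^b(B)$, i.e.\ only up to coherent homotopy, so $S$ is not literally a complex of $B$-$A$-bimodules; the plan is to rigidify it via a differential graded endomorphism algebra.

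Represent $S$ by a bounded complex of finitely generated projective $B$-modules and form the DG algebra $\mathcal{E} = \Hom^\bullet_B(S, S)$. By the above, $H^0(\mathcal{E}) \cong \opcat{A}$ and $H^i(\mathcal{E}) = 0$ for $i \neq 0$, so the good truncation $\tau_{\leq 0}\mathcal{E}$ is a DG subalgebra sitting in a pair of DG-algebra quasi-isomorphisms $\opcat{A} \twoheadleftarrow \tau_{\leq 0}\mathcal{E} \hookrightarrow \mathcal{E}$. Now $S$ is tautologically a DG module over $\mathcal{E} \otimes_k B$; restricting the action to $\tau_{\leq 0}\mathcal{E} \otimes_k B$ and applying derived induction along $\tau_{\leq 0}\mathcal{E} \to \opcat{A}$ produces, after replacing by a bimodule projective resolution, an honest complex $X$ of $B$-$A$-bimodules whose restriction of scalars to $B$ is quasi-isomorphic to $S$; in particular $X \ltens_A A \cong F(A)$ in $D^b(B)$.

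It remains to check that $X \ltens_A - : D^b(A) \to D^b(B)$ is a derived equivalence agreeing with $F$. For the first point, $X$ restricts over $B$ to the one-sided tilting complex $S$ with endomorphism algebra $\opcat{A}$, so by Rickard's Morita theory a complex of $B$-$A$-bimodules with this property is a two-sided tilting complex (this uses that $k$ is a field, so that $A$ is flat over $k$), whence $X \ltens_A -$ is an equivalence with quasi-inverse $\rhom_B(X, B) \ltens_B -$; alternatively, $X \ltens_A -$ restricts to $\Perf(A) \to \Perf(B)$, is fully faithful on the generator $A$ because $\Hom_{D^b(B)}(X \ltens_A A, X \ltens_A A[i]) \cong \Hom_{D^b(A)}(A, A[i])$, and is essentially surjective because $S = F(A)$ generates $\Perf(B)$ as a thick subcategory. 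For agreement with $F$, the restriction/induction data supply a canonical natural transformation from $X \ltens_A -$ to $F$ which is an isomorphism on $A$ by construction, hence on all of $A\proj$ by additivity; since every object of $D^b(A)$ is obtained from $A$ by finitely many shifts, cones and direct summands and both functors are triangulated, it follows that $F(Y) \cong X \ltens_A Y$ for every object $Y$.

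The main obstacle is the rigidification step: converting the homotopy-coherent $\opcat{A}$-action on $F(A)$ into a strict complex of bimodules while keeping enough control to compare the resulting functor with $F$ itself, not merely with \emph{some} standard equivalence. This is exactly what the DG endomorphism algebra and its truncation accomplish; Keller's language of DG categories and derived categories of DG bimodules carries out the same idea more conceptually, and is the point of view hinted at in the introduction.
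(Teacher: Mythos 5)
Your steps through the construction of $X$ — reading off the one-sided tilting complex $S = F(A)$, forming the DG endomorphism algebra $\mathcal{E}$, truncating, and inducing along the quasi-isomorphism $\tau_{\leq 0}\mathcal{E} \to \opcat{A}$ to rigidify the $A$-action — are essentially correct and follow Keller's conceptual reworking of this part of Rickard's argument (which the paper's introduction already flags as the natural modern point of view). The verification that $X$ is a two-sided tilting complex is also fine. The paper itself offers no proof, simply citing \cite[Corollary 3.5]{rickard_derivedfunctors}, so your write-up is a genuine alternative route for everything up to that point.

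The gap is in the final step, the objectwise agreement $F(Y) \cong X \ltens_A Y$. You assert that ``the restriction/induction data supply a canonical natural transformation from $X \ltens_A -$ to $F$,'' but the construction provides no morphism at all towards $F$: you have rigidified $S$ into a bimodule complex, but $F$ is an arbitrary triangulated functor with no DG lift in hand, so there is nothing for the restriction/induction unit and counit to map into. Producing such a natural transformation that is an isomorphism would show $F$ is \emph{naturally} isomorphic to a standard equivalence, which is strictly stronger than what Rickard proves and is, for general derived equivalences, a well-known open question; Rickard's Corollary 3.5 is deliberately phrased objectwise and its proof of that objectwise agreement is a delicate dévissage, not a formal consequence of agreement on $A\proj$. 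Even granting a natural isomorphism on $A\proj$, your ``shifts, cones, direct summands'' argument does not close it, for two reasons: cones are not functorial in a triangulated category, so a pointwise isomorphism on the ends of a triangle does not propagate to the cone without compatibility of the induced maps; and the thick subcategory of $D^b(A)$ generated by $A$ is $\Perf(A)$, which equals $D^b(A)$ only when $A$ has finite global dimension — false for the symmetric algebras this paper cares about. One needs a separate argument to pass from $K^b(A\proj)$ to all of $D^b(A)$, and a genuinely different argument (Rickard's) to get objectwise agreement on $K^b(A\proj)$ in the first place.
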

	
	We have that $X$ is a two-sided tilting complex if and only if there is an object $\tilde{X}$ of $\bbimder{A}{B}$ such that $X \ltens_A \tilde{X} \cong B$ in $\bbimder{B}{B}$ and $\tilde{X} \ltens_B X \cong A \text{ in } \bbimder{A}{A}$. The object $\tilde{X}$ is called the \emph{inverse} of $X$, \cite[Definition 4.2]{rickard_derivedfunctors}. It is itself a two-sided tilting complex, inducing a standard derived equivalence
	\begin{center}
		\begin{tikzcd}[sep=small]
			\tilde{X} \ltens_B -: D^b(B) \arrow[r,"\sim"] & D^b(A).
		\end{tikzcd}
	\end{center}
	
	Recall that a \emph{perfect} object in a derived category is any object isomorphic to a bounded chain complex of finitely generated projective $A$-modules. If $X \in \bbimder{B}{A}$ is a two-sided tilting complex, then $X$ is perfect in $D^b(B)$ and in $D^b(\opcat{A})$.
	
	The perfect objects in $D^b(A)$ form a triangulated subcategory $\Perf(A)$ of $D^b(A)$. If $A\stmod$ is the stable module category of $A$, then \cite[Theorem 2.1]{rickard_derivedstable} there is an equivalence of triangulated categories \begin{tikzcd}[cramped, sep=small]
		D^b(A) / \Perf(A) \arrow[r,"\sim"] & A\stmod.
	\end{tikzcd}

	\subsection{Perverse equivalences}
	
	Let $A$ and $B$ be algebras, and let $\mathcal{A}=A\modcat$ and $\mathcal{B} = B\modcat$. Suppose that there is a derived equivalence \begin{tikzcd}[cramped,sep=small]
		F:D^b(A) \arrow[r,"\sim"] & D^b(B).
	\end{tikzcd}
	Let $\{S_1,\ldots,S_r\}$, $\{S'_1,\ldots,S'_r\}$ be the sets of isomorphism classes of simple $A$- and $B$-modules respectively, and $I=\{1,\ldots,r\}$ the shared indexing set. It is a standard fact that derived equivalent algebras have the same number of simple modules, but that a derived equivalence need not provide a bijection between the two sets.	
	
	Recall that a full abelian subcategory $\mathcal{A}'$ of $\mathcal{A}$ is a \emph{Serre subcategory} if whenever 
	\begin{center}
		\begin{tikzcd}[sep=small]
			0 \arrow[r] & L \arrow[r] & M \arrow[r] & N \arrow[r] & 0
		\end{tikzcd}
	\end{center}
	is an exact sequence in $\mathcal{A}$, the object $M$ belongs to $\mathcal{A}'$ if and only if both $L$ and $N$ belong to $\mathcal{A}'$. Serre subcategories of $\mathcal{A} = A\modcat$ coincide with subsets $J \subset I$. Given a subset $J \subset I$, we denote by $\mathcal{A}_J$ the Serre subcategory \emph{generated by} the set $\{S_j\}_{j \in J}$: this is the full subcategory of $\mathcal{A}$ whose objects are precisely the $A$-modules whose composition factors are all in the set $\{S_j\}_{j \in J}$.
	
	If $\mathcal{A}'$ is a Serre subcategory of $\mathcal{A}$, then we define $D^b_{\mathcal{A}'}(A)$ to be the thick subcategory of $D^b(A)$ whose objects are those isomorphic to complexes with homology contained entirely in $\mathcal{A}'$. The (Verdier) quotient $D^b(A) / D^b_{\mathcal{A}'}(A)$ is the triangulated category obtained from $D^b(A)$ by localising at the collection of morphisms $f:U \to V$ in $D^b(A)$ for which $\cone(f) \in D^b_{\mathcal{A}'}(A)$, together with a triangulated functor \begin{tikzcd}[cramped,sep=small]
		D^b(A) \arrow[r] & D^b(A) / D^b_{\mathcal{A}'}(A),
	\end{tikzcd} universal among all triangulated functors \begin{tikzcd}[cramped,sep=small]
		D^b(A) \arrow[r] & \mathcal{T}
	\end{tikzcd} whose kernel contains $D^b_{\mathcal{A}'}(A)$.
	
	Suppose we have filtrations
	\begin{align*}
		\emptyset &= I_0 \subset I_1 \subset \ldots \subset I_t = I, \\
		\emptyset &= I'_0 \subset I'_1 \subset \ldots \subset I'_t = I.
	\end{align*}
	This defines filtrations by Serre subcategories
	\begin{align*}
		0 &= \mathcal{A}_0 \subset \mathcal{A}_1 \subset \ldots \subset \mathcal{A}_t = \mathcal{A}, \\
		0 &= \mathcal{B}_0 \subset \mathcal{B}_1 \subset \ldots \subset \mathcal{B}_t = \mathcal{B},
	\end{align*}
	with $\mathcal{A}_i \defeq \mathcal{A}_{I_i}$ and $\mathcal{B}_i \defeq \mathcal{B}_{I'_i}$. For each $i$, the \emph{Serre quotient} $\mathcal{A}_i / \mathcal{A}_{i-1}$ is an abelian category, together with an exact functor \begin{tikzcd}[sep=small, cramped] \mathcal{A}_i \arrow[r,"\sim"] & \mathcal{A}_i / \mathcal{A}_{i-1}\end{tikzcd}, universal among all exact functors \begin{tikzcd}[sep=small, cramped] \mathcal{A}_i \arrow[r,"\sim"] & \mathcal{C}\end{tikzcd} whose kernel contains $\mathcal{A}_{i-1}$. Let $p:\{1,\ldots,t\} \to \mathbb{Z}$ be a function. 
	
	\begin{defn}\label{defn:perverse_gen}
		The derived equivalence \begin{tikzcd}[cramped,sep=small]F:D^b(A) \arrow[r,"\sim"] & D^b(B)\end{tikzcd} is \emph{perverse relative to} $(\mathcal{A}_\bullet, \mathcal{B}_\bullet, p)$ if both of the following hold.
		\begin{enumerate}[label=(\roman*)]
			\item The functor $F$ restricts to an equivalence \begin{tikzcd}[sep=small, cramped] D^b_{\mathcal{A}_i}(A) \arrow[r,"\sim"] & D^b_{\mathcal{B}_i}(B)\end{tikzcd} of triangulated categories for every $i$.
			\item For every $i$, $F[p(i)]$ induces an equivalence \begin{tikzcd}[sep=small, cramped] \mathcal{A}_i/\mathcal{A}_{i-1} \arrow[r,"\sim"] & \mathcal{B}_i/\mathcal{B}_{i-1}\end{tikzcd} of abelian categories.
		\end{enumerate}
		We call $(\mathcal{A}_\bullet, \mathcal{B}_\bullet, p)$ the \emph{perversity data} and $p$ the \emph{perversity function} of the perverse equivalence $F$.
	\end{defn}
	
	We may also say that $F$ is perverse relative to the data $(I_\bullet, I'_\bullet, p)$. Notationally, to highlight the shifts, we may write the filtrations as
	\[
	\emptyset = I_0 \subset_{p(1)} I_1 \subset_{p(2)} \ldots \subset_{p(t)} I_t = I,
	\]
	\[
	\emptyset = I'_0 \subset_{p(1)} I'_1 \subset_{p(2)} \ldots \subset_{p(t)} I'_t = I.
	\]
	
	Digging a little deeper into the definition, the equivalence \begin{tikzcd}[sep=small, cramped] D^b_{\mathcal{A}_i}(A) \arrow[r,"\sim"] & D^b_{\mathcal{B}_i}(B)\end{tikzcd} for each $i$ induces an equivalence \begin{tikzcd}[sep=small, cramped] D^b(A) / D^b_{\mathcal{A}_i}(A) \arrow[r,"\sim"] & D^b(B) / D^b_{\mathcal{B}_i}(B)\end{tikzcd}. The required equivalence in condition (ii) can be seen in the commutative diagram (see e.g. \cite[Remark 3.21]{grant_periodic})
	\begin{center}
		\begin{tikzcd}
			D^b(A)/D^b_{\mathcal{A}_{i-1}}(A) \arrow[r,"\sim"] & D^b(B)/D^b_{\mathcal{B}_{i-1}}(B) \\
			\mathcal{A}_i/\mathcal{A}_{i-1} \arrow[r,dashed,"\exists\sim"] \arrow[u,hook] & \mathcal{B}_i/\mathcal{B}_{i-1} \arrow[u,hook]
		\end{tikzcd}
	\end{center}
	sitting inside a larger diagram
	\begin{center}
		\begin{tikzcd}[sep=small]
			& D^b(A) \arrow[rr,"F{[p(i)]}"] \arrow[dd] & & D^b(B) \arrow[dd] \\
			\mathcal{A}_i \arrow[ur,hook] \arrow[dd] & & \mathcal{B}_i \arrow[ur,hook]  \\
			& D^b(A)/D^b_{\mathcal{A}_{i-1}}(A) \arrow[rr,"\sim"{near start}] & & D^b(B)/D^b_{\mathcal{B}_{i-1}}(B) \\
			\mathcal{A}_i/\mathcal{A}_{i-1} \arrow[rr,dashed,"\exists\sim"] \arrow[ur,hook] & & \mathcal{B}_i/\mathcal{B}_{i-1} \arrow[from=uu,crossing over] \arrow[ur,hook]
		\end{tikzcd}
	\end{center}
	whose vertical arrows are quotient functors, and the embedding \begin{tikzcd}[cramped,sep=small] \mathcal{A}_i \arrow[r,hook] & D^b(A)\end{tikzcd} is via the usual embedding of $A\modcat$ in $D^b(A)$. The functor \begin{tikzcd}[cramped,sep=small] \mathcal{A}_i/\mathcal{A}_{i-1} \arrow[r,hook] & D^b(A)/D^b_{\mathcal{A}_{i-1}}(A) \end{tikzcd} exists and is fully faithful by the universal property of the quotient \begin{tikzcd}[cramped,sep=small] \mathcal{A}_i \arrow[r] & \mathcal{A}_i/\mathcal{A}_{i-1} \end{tikzcd}. Identical reasoning justifies the arrows on the other side. 
	
	Two immediate consequences of the definition are the following, \cite[Lemma 4.16, Lemma 4.2]{chuang-rouquier_perverse17}.
	
	\begin{lem}\label{lem:perversity0}
		If \begin{tikzcd}[cramped,sep=small]F:D^b(A) \arrow[r,"\sim"] & D^b(B)\end{tikzcd} is perverse with perversity function $p \equiv 0$, then we have an equivalence of abelian categories \begin{tikzcd}[cramped,sep=small]A\modcat \arrow[r,"\sim"] & B\modcat.\end{tikzcd}
	\end{lem}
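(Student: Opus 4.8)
The plan is to deduce the statement from the fact that such an $F$ is \emph{t-exact} for the standard t-structures on $D^b(A)$ and $D^b(B)$: a t-exact equivalence restricts to an equivalence between the hearts of source and target, and here these hearts are exactly $A\modcat$ and $B\modcat$ (exactness of the restriction is automatic, an additive equivalence of abelian categories being exact). So the whole content is the t-exactness of $F$, and I would prove it by induction along the filtration. For each $i$, part (i) of Definition \ref{defn:perverse_gen} says $F$ restricts to an equivalence $F_i\colon D^b_{\mathcal{A}_i}(A) \xrightarrow{\sim} D^b_{\mathcal{B}_i}(B)$; I claim each $F_i$ is t-exact for the t-structures obtained by restricting the standard ones — these are bounded, with hearts the Serre subcategories $\mathcal{A}_i$ and $\mathcal{B}_i$ — and then $F = F_t$ finishes. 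The recurring tool is the elementary observation that a triangulated functor between triangulated categories equipped with bounded t-structures which sends the heart into the heart is automatically t-exact: split an object $X$ concentrated in degrees $\le 0$ using the triangles $\tau^{\le j-1}X \to X \to H^j(X)[-j] \to \tau^{\le j-1}X[1]$ and induct on the number of nonzero homology groups, using that aisles are closed under extensions (and dually on the other side). Granting this, the base case $i=1$ is immediate: $\mathcal{A}_0 = 0$ and $D^b_{\mathcal{A}_0}(A) = 0$, so part (ii) at level $1$ is literally the statement that $F$ maps $\mathcal{A}_1 \hookrightarrow D^b(A)$ into $\mathcal{B}_1 \hookrightarrow D^b(B)$.

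For the inductive step I would assume $F_{i-1}$, hence also $F_{i-1}^{-1}$, is t-exact, and take a module $M \in \mathcal{A}_i$. Part (ii) at level $i$ says the image of $F(M)$ in the Verdier quotient $D^b(B)/D^b_{\mathcal{B}_{i-1}}(B)$ is isomorphic to the image of a module in $\mathcal{B}_i$; since $\mathcal{B}_{i-1}$ is a Serre subcategory, $D^b_{\mathcal{B}_{i-1}}(B)$ is stable under truncation, the quotient functor $q$ is t-exact, and $\ker q = D^b_{\mathcal{B}_{i-1}}(B)$, so applying $H^n$ gives $H^n(F(M)) \in \mathcal{B}_{i-1}$ for every $n \neq 0$. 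Hence the two unwanted truncations $Y_- := \tau^{\le -1}F(M)$ and $Y_+ := \tau^{\ge 1}F(M)$ lie in $D^b_{\mathcal{B}_{i-1}}(B)$, and t-exactness of $F_{i-1}^{-1}$ gives $F^{-1}(Y_-) \in D^{\le -1}(A)$ and $F^{-1}(Y_+) \in D^{\ge 1}(A)$. I then apply $F^{-1}$ to the truncation triangles $Y_- \to F(M) \to \tau^{\ge 0}F(M) \to Y_-[1]$ and $\tau^{\le 0}F(M) \to F(M) \to Y_+ \to \tau^{\le 0}F(M)[1]$: as $M$ sits in degree $0$, the resulting maps $F^{-1}(Y_-) \to M$ and $M \to F^{-1}(Y_+)$ must vanish, by the orthogonality $\Hom_{D^b(A)}(D^{\le -1}, D^{\ge 0}) = 0 = \Hom_{D^b(A)}(D^{\le 0}, D^{\ge 1})$; both triangles therefore split, and applying $F$ again exhibits $Y_-[1]$ and $Y_+[-1]$ as direct summands of $\tau^{\ge 0}F(M) \in D^{\ge 0}(B)$ and $\tau^{\le 0}F(M) \in D^{\le 0}(B)$ respectively. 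But $Y_-[1] \in D^{\le -2}(B)$ and $Y_+[-1] \in D^{\ge 2}(B)$, so both vanish; thus $F(M) \in B\modcat$, and since $F(M) \in D^b_{\mathcal{B}_i}(B)$ in fact $F(M) \in \mathcal{B}_i$. So $F_i$ sends the heart into the heart, hence is t-exact, completing the induction.

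The step I expect to be the crux is precisely this inductive step: part (ii) of the definition only pins $F(M)$ down modulo $D^b_{\mathcal{B}_{i-1}}(B)$, and the work is to promote that approximate information to an exact statement — which the inductive t-exactness of $F_{i-1}$ makes possible, via the argument of splitting off the wrong-degree truncations of $F(M)$ and observing that they land in the wrong aisle and so vanish. Everything else I would treat as routine t-structure bookkeeping: that the standard t-structure restricts to each $D^b_{\mathcal{A}_i}(A)$ and descends to each quotient $D^b(B)/D^b_{\mathcal{B}_{i-1}}(B)$ with the expected hearts ($\mathcal{A}_i$, $\mathcal{B}_i$, and the Serre quotient $\mathcal{B}_i/\mathcal{B}_{i-1}$), and that the localisation functors are t-exact — all standard, and already implicit in the commutative diagram preceding the statement. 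With the induction complete, $i = t$ gives that $F$ itself is t-exact, and restriction to hearts yields the asserted equivalence $A\modcat \xrightarrow{\sim} B\modcat$.
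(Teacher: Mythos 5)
The paper does not prove this lemma itself; it is quoted directly from \cite[Lemma 4.16]{chuang-rouquier_perverse17}, so there is no internal argument to compare against. Your proof is correct, and the strategy---recast the statement as t-exactness of $F$ for the standard t-structures and prove it by induction along the filtration---is the natural one. The genuine content is exactly where you locate it: condition (ii) of Definition~\ref{defn:perverse_gen} only controls $F(M)$ modulo $D^b_{\mathcal{B}_{i-1}}(B)$, and the inductive t-exactness of $F_{i-1}^{-1}$ is precisely what upgrades that approximate information to the exact statement, by showing the maps $F^{-1}(\tau^{\le -1}F(M)) \to M$ and $M \to F^{-1}(\tau^{\ge 1}F(M))$ vanish by aisle orthogonality, splitting the triangles, and forcing the offending truncations to vanish as summands sitting in the wrong aisle. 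The bookkeeping you defer---that the standard t-structure restricts to each $D^b_{\mathcal{A}_i}(A)$ with heart $\mathcal{A}_i$, that the Verdier quotient $D^b(B)/D^b_{\mathcal{B}_{i-1}}(B)$ inherits a bounded t-structure with heart the Serre quotient $\mathcal{B}/\mathcal{B}_{i-1}$ and t-exact localisation functor, and that a triangulated functor between bounded t-structures sending heart into heart is automatically t-exact---is all standard and correctly stated. This matches Chuang and Rouquier's inductive argument in spirit, phrased a little more explicitly in t-structure language.
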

	
	\begin{lem}\label{lem:perverseinv}
		If \begin{tikzcd}[cramped,sep=small]F:D^b(A) \arrow[r,"\sim"] & D^b(A')\end{tikzcd} is perverse relative to $(\mathcal{A}_\bullet, \mathcal{B}_\bullet, p)$, then the inverse \begin{tikzcd}[cramped,sep=small]F^{-1}:D^b(B) \arrow[r,"\sim"] & D^b(A)\end{tikzcd} is perverse relative to $(\mathcal{B}_\bullet, \mathcal{A}_\bullet, -p)$.
	\end{lem}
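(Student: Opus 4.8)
The plan is to read off both conditions of Definition~\ref{defn:perverse_gen} for $F^{-1}$ directly from the corresponding conditions for $F$, exploiting that every construction involved — passing to the thick subcategory $D^b_{\mathcal{A}_i}(A)$ attached to a Serre subcategory, forming a Verdier quotient, forming a Serre quotient, and forming the ``induced functor'' supplied by a universal property — is functorial, so that replacing $F$ by a quasi-inverse $F^{-1}$ simply replaces every resulting equivalence by a quasi-inverse.

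Condition~(i) is immediate. By hypothesis $F$ restricts to an equivalence $D^b_{\mathcal{A}_i}(A) \xrightarrow{\sim} D^b_{\mathcal{B}_i}(B)$ for each $i$; in particular $F$ carries $D^b_{\mathcal{A}_i}(A)$ onto $D^b_{\mathcal{B}_i}(B)$, hence a quasi-inverse $F^{-1}$ carries $D^b_{\mathcal{B}_i}(B)$ onto $D^b_{\mathcal{A}_i}(A)$ and restricts there to a quasi-inverse equivalence. So $F^{-1}$ satisfies~(i) relative to the filtrations $(\mathcal{B}_\bullet,\mathcal{A}_\bullet)$.

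For condition~(ii), fix $i$ and observe that $F^{-1}[-p(i)]$ is a quasi-inverse of $F[p(i)]$ (shifts are invertible and commute with exact functors up to natural isomorphism, so $(F[p(i)])^{-1} \cong F^{-1}[-p(i)]$). Using~(i) at index $i-1$ for both $F$ and $F^{-1}$ together with the universal property of the Verdier quotient, both functors descend to functors between $D^b(A)/D^b_{\mathcal{A}_{i-1}}(A)$ and $D^b(B)/D^b_{\mathcal{B}_{i-1}}(B)$, and — since forming the induced functor respects composition and the identity — the two descents are mutually quasi-inverse equivalences $\overline{F[p(i)]}$ and $\overline{F^{-1}[-p(i)]}$. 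Condition~(ii) for $F$ says precisely that $\overline{F[p(i)]}$ restricts to an equivalence from the full subcategory $\mathcal{A}_i/\mathcal{A}_{i-1} \hookrightarrow D^b(A)/D^b_{\mathcal{A}_{i-1}}(A)$ onto the full subcategory $\mathcal{B}_i/\mathcal{B}_{i-1} \hookrightarrow D^b(B)/D^b_{\mathcal{B}_{i-1}}(B)$. A quasi-inverse of an equivalence that matches up two full subcategories in this way necessarily matches them up in the opposite direction (compare essential images, and note full faithfulness is inherited by any full subcategory), so $\overline{F^{-1}[-p(i)]}$ restricts to an equivalence $\mathcal{B}_i/\mathcal{B}_{i-1} \xrightarrow{\sim} \mathcal{A}_i/\mathcal{A}_{i-1}$, which is exactly the equivalence that $F^{-1}[-p(i)]$ is required to induce. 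Hence $F^{-1}$ is perverse relative to $(\mathcal{B}_\bullet,\mathcal{A}_\bullet,-p)$.

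I do not expect a serious obstacle: the argument is entirely formal, essentially a bookkeeping exercise in the functoriality of quotients and induced functors. The only slightly delicate point is the assertion that a quasi-inverse of an equivalence identifying two full subcategories identifies them the other way round, which needs a short check with the unit and counit of the original equivalence; and one should be careful that the shift arithmetic genuinely produces a quasi-inverse, i.e.\ that $(F[p(i)])^{-1} \cong F^{-1}[-p(i)]$ up to natural isomorphism.
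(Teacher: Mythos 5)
Your proof is correct. The paper does not actually supply a proof of this lemma — it is cited directly from Chuang and Rouquier (their Lemma 4.2) — so there is nothing in the text to compare against, but your argument is exactly the natural formal one: condition (i) passes to the quasi-inverse immediately, and for condition (ii) the identification $(F[p(i)])^{-1}\cong F^{-1}[-p(i)]$ together with functoriality of descent to Verdier quotients and the fact that a quasi-inverse of an equivalence matching two full subcategories matches them in the opposite direction (checked via the restricted unit and counit) yields the claim with perversity $-p$. The one small point worth being explicit about, which you flag yourself, is that the descents $\overline{F[p(i)]}$ and $\overline{F^{-1}[-p(i)]}$ are mutually quasi-inverse because the universal property of the Verdier quotient sends the natural isomorphisms $F^{-1}F\cong\Id$ and $FF^{-1}\cong\Id$ to natural isomorphisms between the composed descents and the identities; once that is recorded, the rest is bookkeeping, as you say.
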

	
	In general, the composition of two perverse equivalences need not remain perverse. However, the composition of two perverse equivalences at a fixed middle filtration is perverse, \cite[Lemma 4.4]{chuang-rouquier_perverse17}.
	
	\begin{lem}\label{lem:perversecomp}
		Let $C$ be another algebra, derived equivalent to $A$ and $B$. Suppose we have an equivalence 
		\begin{tikzcd}[cramped,sep=small]
			F: D^b(A) \arrow[r,"\sim"] & D^b(B),
		\end{tikzcd}
		perverse relative to $(\mathcal{A}_\bullet, \mathcal{B}_\bullet, p)$, and an equivalence
		\begin{tikzcd}[cramped,sep=small]
			F': D^b(B) \arrow[r,"\sim"] & D^b(C),
		\end{tikzcd}
		perverse relative to $(\mathcal{B}_\bullet, \mathcal{C}_\bullet, p')$. Then the composition $F' \circ F$ is perverse relative to $(\mathcal{A}_\bullet, \mathcal{C}_\bullet, p+p')$.
	\end{lem}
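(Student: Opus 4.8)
The plan is to verify conditions (i) and (ii) of Definition \ref{defn:perverse_gen} for the composite $F' \circ F$ relative to the data $(\mathcal{A}_\bullet, \mathcal{C}_\bullet, p + p')$, using crucially that $F$ and $F'$ are perverse with respect to the \emph{same} intermediate filtration $\mathcal{B}_\bullet$ of $\mathcal{B} = B\modcat$; without this alignment the statement would be false, and it is exactly where the argument has content.

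First I would dispatch condition (i), which is formal: for each $i$ the hypotheses supply equivalences $D^b_{\mathcal{A}_i}(A) \xrightarrow{\sim} D^b_{\mathcal{B}_i}(B)$ and $D^b_{\mathcal{B}_i}(B) \xrightarrow{\sim} D^b_{\mathcal{C}_i}(C)$ obtained by restricting $F$ and $F'$, and their composite is visibly the restriction of $F' \circ F$, hence an equivalence $D^b_{\mathcal{A}_i}(A) \xrightarrow{\sim} D^b_{\mathcal{C}_i}(C)$.

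For condition (ii), fix $i$. Since $F$ restricts to an equivalence $D^b_{\mathcal{A}_{i-1}}(A) \xrightarrow{\sim} D^b_{\mathcal{B}_{i-1}}(B)$ (condition (i) for $F$ at index $i-1$), it descends, by the universal property of the Verdier quotient, to an equivalence $\bar F : D^b(A)/D^b_{\mathcal{A}_{i-1}}(A) \xrightarrow{\sim} D^b(B)/D^b_{\mathcal{B}_{i-1}}(B)$; likewise $F'$ descends to $\bar{F'}$, and uniqueness in the universal property identifies the functor induced by $F' \circ F$ on the quotients with $\bar{F'} \circ \bar F$. Because $F$ and $F'$ are triangulated, $(F' \circ F)[p(i)+p'(i)] \cong F'[p'(i)] \circ F[p(i)]$, and this passes to the quotients, giving $\overline{F' \circ F}[p(i)+p'(i)] \cong \bar{F'}[p'(i)] \circ \bar{F}[p(i)]$. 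Now condition (ii) for $F$ says $\bar{F}[p(i)]$ carries the fully faithful image of $\mathcal{A}_i/\mathcal{A}_{i-1}$ in $D^b(A)/D^b_{\mathcal{A}_{i-1}}(A)$ onto the image of $\mathcal{B}_i/\mathcal{B}_{i-1}$ in $D^b(B)/D^b_{\mathcal{B}_{i-1}}(B)$ and restricts there to an equivalence of abelian categories $\mathcal{A}_i/\mathcal{A}_{i-1} \xrightarrow{\sim} \mathcal{B}_i/\mathcal{B}_{i-1}$; condition (ii) for $F'$ — applied with the \emph{same} filtration $\mathcal{B}_\bullet$ — says $\bar{F'}[p'(i)]$ restricts to an equivalence $\mathcal{B}_i/\mathcal{B}_{i-1} \xrightarrow{\sim} \mathcal{C}_i/\mathcal{C}_{i-1}$ on precisely that subcategory. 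Composing the two restrictions shows $\overline{F' \circ F}[p(i)+p'(i)]$ restricts to an equivalence of abelian categories $\mathcal{A}_i/\mathcal{A}_{i-1} \xrightarrow{\sim} \mathcal{C}_i/\mathcal{C}_{i-1}$, completing the verification.

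The step I expect to require the most care — the only one that is not bookkeeping — is matching the target of the first restriction to the source of the second: the abelian subcategory $\mathcal{B}_i/\mathcal{B}_{i-1}$ is not stable under the shift functor of $D^b(B)/D^b_{\mathcal{B}_{i-1}}(B)$, so it genuinely matters that $\bar{F}[p(i)]$ lands in $\mathcal{B}_i/\mathcal{B}_{i-1}$ on the nose rather than in a shift of it, which is exactly what lets us feed its output into $\bar{F'}[p'(i)]$ and invoke the perversity of $F'$. With the filtrations aligned as hypothesised this is automatic, so beyond tracking shifts carefully there is no obstacle; in particular the argument imposes no relation between $p$ and $p'$ other than their sharing the index set $\{1,\dots,t\}$.
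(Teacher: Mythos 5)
The paper does not prove this lemma; it cites \cite[Lemma 4.4]{chuang-rouquier_perverse17}. Your argument is the standard direct verification and is correct: condition (i) is closed under composition of restrictions, and for condition (ii) the key point---which you correctly flag as the crux---is that $\bar F[p(i)]$ carries the (non-shift-stable) heart $\mathcal{A}_i/\mathcal{A}_{i-1}$ into $\mathcal{B}_i/\mathcal{B}_{i-1}$ \emph{on the nose}, exactly matching the domain on which perversity of $F'$ applies, so the two restrictions compose to an abelian equivalence $\mathcal{A}_i/\mathcal{A}_{i-1} \xrightarrow{\sim} \mathcal{C}_i/\mathcal{C}_{i-1}$ under the shift $p(i)+p'(i)$. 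One small bookkeeping point worth making explicit if you write this up: $\bar F$ is not merely a functor but an \emph{equivalence} of Verdier quotients, because $F^{-1}$ also descends (as $F$ restricts to an equivalence, not just a functor, on the thick subcategories), and the two induced functors are mutually inverse by uniqueness in the universal property.
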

	
	The preceding results produce the following, \cite[Proposition 4.17]{chuang-rouquier_perverse17}.
	
	\begin{prop}\label{prop:auxilperv}
		Suppose we have equivalences
		\begin{tikzcd}[cramped,sep=small]
			F: D^b(A) \arrow[r,"\sim"] & D^b(B),
		\end{tikzcd}
		perverse relative to $(\mathcal{A}_\bullet, \mathcal{B}_\bullet, p)$, and
		\begin{tikzcd}[cramped,sep=small]
			F': D^b(A) \arrow[r,"\sim"] & D^b(C),
		\end{tikzcd}
		perverse relative to $(\mathcal{A}_\bullet, \mathcal{C}_\bullet, p)$. Then $F' \circ F^{-1}$ induces an equivalence of abelian categories \begin{tikzcd}[cramped,sep=small]
			B\modcat \arrow[r,"\sim"] & C\modcat.
		\end{tikzcd}
	\end{prop}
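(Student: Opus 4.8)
The plan is to chain together the three preceding lemmas, exploiting the fact that $F$ and $F'$ are perverse with respect to the \emph{same} filtration $\mathcal{A}_\bullet$ of $\mathcal{A} = A\modcat$. First I would apply Lemma \ref{lem:perverseinv} to $F$: since $F:D^b(A) \to D^b(B)$ is perverse relative to $(\mathcal{A}_\bullet,\mathcal{B}_\bullet,p)$, its inverse $F^{-1}:D^b(B)\to D^b(A)$ is perverse relative to $(\mathcal{B}_\bullet,\mathcal{A}_\bullet,-p)$.

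Next I would set up the composite $F'\circ F^{-1}:D^b(B)\to D^b(A)\to D^b(C)$ so that Lemma \ref{lem:perversecomp} applies. The point is that this composite passes through $D^b(A)$ with the intermediate filtration $\mathcal{A}_\bullet$ playing the role of the fixed middle filtration: $F^{-1}$ is perverse with target filtration data $\mathcal{A}_\bullet$, and $F'$ is perverse with source filtration data $\mathcal{A}_\bullet$. Thus, with the algebras $B,A,C$ here playing the parts of $A,B,C$ in the statement of Lemma \ref{lem:perversecomp}, we conclude that $F'\circ F^{-1}$ is perverse relative to $(\mathcal{B}_\bullet,\mathcal{C}_\bullet,(-p)+p) = (\mathcal{B}_\bullet,\mathcal{C}_\bullet,0)$. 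Finally, since the perversity function of $F'\circ F^{-1}$ is identically zero, Lemma \ref{lem:perversity0} immediately yields an equivalence of abelian categories $B\modcat \to C\modcat$ induced by $F'\circ F^{-1}$, as required.

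There is no genuine obstacle here; the result is a formal corollary. The one thing to be careful about is the hypothesis matching in Lemma \ref{lem:perversecomp}, namely that $\mathcal{A}_\bullet$ really does serve as a common middle filtration for $F^{-1}$ and $F'$ — this is immediate from the assumption that both $F$ and $F'$ are perverse relative to the \emph{same} filtration $\mathcal{A}_\bullet$. The only bookkeeping is the cancellation of perversity functions: inverting $F$ negates its perversity function to $-p$, and composing with $F'$, whose perversity function is the same $p$, restores the zero function, at which point Lemma \ref{lem:perversity0} does the rest.
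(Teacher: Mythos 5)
Your proof is correct and follows exactly the same route as the paper: invert $F$ via Lemma \ref{lem:perverseinv}, compose with $F'$ via Lemma \ref{lem:perversecomp} to get perversity function $(-p)+p\equiv 0$, then conclude via Lemma \ref{lem:perversity0}. In fact your final citation is the accurate one — the paper's printed proof contains a small typo, referring back to Lemma \ref{lem:perversecomp} where Lemma \ref{lem:perversity0} is clearly intended.
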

	
	\begin{proof}
		By Lemmas \ref{lem:perverseinv} and \ref{lem:perversecomp}, $F' \circ F^{-1}$ is perverse relative to $(\mathcal{B}_\bullet, \mathcal{C}_\bullet, \overbar{p})$, with $\overbar{p} = p + (-p) \equiv 0$, so the result follows by Lemma \ref{lem:perversecomp}.
	\end{proof}
	
	In other words, a perverse equivalence is determined up to Morita equivalence by the filtration on the left hand side and the perversity function $p$. We think of this as a \emph{uniqueness} result for perverse equivalences.
	
	As one final basic definition, an equivalence \begin{tikzcd}[cramped,sep=small]
		F: D^b(A) \arrow[r,"\sim"] & D^b(A),
	\end{tikzcd} perverse relative to $(\mathcal{A}_\bullet, \mathcal{A}_\bullet, p)$, is a \emph{self-perverse equivalence}. In such cases, we will say that $F$ is perverse relative to $(\mathcal{A}_\bullet, p)$.

	\subsection{Simple modules and projective modules}
	
	For the remainder of this section, we assume our algebras are symmetric. Here, it will be beneficial to consider the perversity data $(I_\bullet, I'_\bullet, p)$. Fix the notation of the previous subsection. Let $\mathcal{S}$ and $\mathcal{S}'$ be the sets of isomorphism classes of simple $A$- and $B$-modules respectively. For a subset $J \subset I$, set $\mathcal{S}_J = \{S_j\}_{j \in J}$ and $\mathcal{S}'_J = \{S'_j\}_{j \in J}$. Given filtrations $I_\bullet$, $I'_\bullet$ of subsets of $I$, set $\mathcal{S}_i = \mathcal{S}_{I_i}$ and $\mathcal{S}'_i = \mathcal{S}'_{I_i}$.
	
	We can rephrase the conditions in Definition \ref{defn:perverse_gen} to conditions wholly on the simple modules themselves. The following is \cite[Lemma 4.19]{chuang-rouquier_perverse17}.
	
	\begin{prop}\label{prop:perversesimp}
		A derived equivalence \begin{tikzcd}[cramped, sep=small]F:D^b(A) \arrow[r,"\sim"] & D^b(B)\end{tikzcd} is perverse relative to $(I_\bullet, I'_\bullet, p)$ if both of the following hold.
		\begin{enumerate}[label=(\roman*)]
			\item For every $i$ and every $V \in \mathcal{S}_i\setminus\mathcal{S}_{i-1}$, the composition factors of $H_t(F(V))$ for $t\ne p(i)$ are all in $\mathcal{S}'_{i-1}$, and there is a filtration $L_1 \subset L_2 \subset H_{p(i)}(F(V))$ such that the composition factors of $L_1$ and the composition factors of $H_{p(i)}(F(V))/L_2$ are all in $\mathcal{S}'_{i-1}$, and $L_2/L_1 \in \mathcal{S}'_i\setminus\mathcal{S}'_{i-1}$.
			\item The map $V\mapsto L_2/L_1$ described above is a bijection between $\mathcal{S}_i\setminus\mathcal{S}_{i-1}$ and $\mathcal{S}'_i\setminus\mathcal{S}'_{i-1}$.
		\end{enumerate}
	\end{prop}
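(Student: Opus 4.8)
The plan is to unwind both conditions of Definition~\ref{defn:perverse_gen} in terms of the behaviour of $F$ on simple modules, using that $A$ and $B$ are symmetric so that projectives and simples are nicely dual. First I would observe that condition~(i) of Definition~\ref{defn:perverse_gen}, namely that $F$ restricts to an equivalence $D^b_{\mathcal{A}_i}(A) \xrightarrow{\sim} D^b_{\mathcal{B}_i}(B)$, is equivalent to the containment $F(D^b_{\mathcal{A}_i}(A)) \subseteq D^b_{\mathcal{B}_i}(B)$ together with the reverse containment for $F^{-1}$; and since $D^b_{\mathcal{A}_i}(A)$ is the thick subcategory generated by the simples $\{S_j\}_{j \in I_i}$, this reduces to checking that $F(S_j) \in D^b_{\mathcal{B}_i}(B)$ for each $j \in I_i$ (and dually). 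The hypothesis in~(i) of the proposition — that for $V \in \mathcal{S}_i \setminus \mathcal{S}_{i-1}$ every homology $H_t(F(V))$ with $t \neq p(i)$ lies in $\mathcal{S}'_{i-1}$, and $H_{p(i)}(F(V))$ has a three-step filtration with top and bottom in $\mathcal{S}'_{i-1}$ and middle layer in $\mathcal{S}'_i \setminus \mathcal{S}'_{i-1}$ — immediately gives $H_t(F(V)) \in \mathcal{B}_i$ for all $t$, hence $F(S_j) \in D^b_{\mathcal{B}_i}(B)$ for every $j \in I_i$, by induction on $i$. The reverse inclusion, that $F^{-1}(D^b_{\mathcal{B}_i}(B)) \subseteq D^b_{\mathcal{A}_i}(A)$, I would extract from the bijectivity assumption~(ii) together with a counting/dimension argument: since $F$ is an equivalence and the filtration layers match up bijectively at the level of simples, the images $F(S_j)$, $j \in I_i$, generate the same thick subcategory as the $S'_{j'}$, $j' \in I'_i$.

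Next I would address condition~(ii) of the definition, that $F[p(i)]$ induces an equivalence $\mathcal{A}_i/\mathcal{A}_{i-1} \xrightarrow{\sim} \mathcal{B}_i/\mathcal{B}_{i-1}$. Using the commutative diagram reproduced in the excerpt (following \cite[Remark 3.21]{grant_periodic}), it suffices to show that the composite functor $\mathcal{A}_i \hookrightarrow D^b(A) \xrightarrow{F[p(i)]} D^b(B) \to D^b(B)/D^b_{\mathcal{B}_{i-1}}(B)$ kills $\mathcal{A}_{i-1}$ and factors through an equivalence onto the image of $\mathcal{B}_i/\mathcal{B}_{i-1}$. That it kills $\mathcal{A}_{i-1}$ is part~(i), already established. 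For the simple module $V = S_j$ with $j \in I_i \setminus I_{i-1}$, the hypothesis in~(i) of the proposition says precisely that, after shifting by $p(i)$ and passing to the quotient by $D^b_{\mathcal{B}_{i-1}}(B)$, the object $F[p(i)](V)$ becomes isomorphic to the image of the simple module $L_2/L_1 \in \mathcal{S}'_i \setminus \mathcal{S}'_{i-1}$: the other homologies vanish in the quotient because they lie in $\mathcal{B}_{i-1}$, and the sub/quotient layers $L_1$ and $H_{p(i)}(F(V))/L_2$ also vanish in the quotient, so only $L_2/L_1$ survives. Thus $F[p(i)]$ sends a complete set of simples of $\mathcal{A}_i/\mathcal{A}_{i-1}$ to a complete set of simples of $\mathcal{B}_i/\mathcal{B}_{i-1}$, the completeness and injectivity being exactly condition~(ii) of the proposition.

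It remains to upgrade "sends simples to simples bijectively" to "is an equivalence of abelian categories". Here I would invoke that an exact functor between module categories (or Serre quotients thereof, which are again module categories over some algebra) that is fully faithful and sends a complete irredundant set of simples to a complete irredundant set of simples is an equivalence; fullness and faithfulness on the quotient categories follow from the fact that $F$ itself is a derived equivalence, hence fully faithful on $D^b$, combined with the vanishing of the relevant $\Hom$ and $\Ext$ groups between objects of $\mathcal{A}_i/\mathcal{A}_{i-1}$ and the "lower" part $D^b_{\mathcal{B}_{i-1}}(B)$ that gets quotiented out — this is where the homology-support conditions in~(i) do the real work, ensuring that $\Hom_{D^b(B)/D^b_{\mathcal{B}_{i-1}}(B)}(F[p(i)]X, F[p(i)]Y) \cong \Hom_{D^b(A)/D^b_{\mathcal{A}_{i-1}}(A)}(X,Y) \cong \Hom_{\mathcal{A}_i/\mathcal{A}_{i-1}}(X,Y)$ for $X,Y$ in $\mathcal{A}_i$.

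The main obstacle I anticipate is the careful bookkeeping in this last step: verifying that the Serre quotient $\mathcal{A}_i/\mathcal{A}_{i-1}$ genuinely embeds as the heart of the relevant $t$-structure inside $D^b(A)/D^b_{\mathcal{A}_{i-1}}(A)$ (so that "equivalence on simples plus fully faithful" really does imply "equivalence of abelian categories"), and tracking the shift $[p(i)]$ correctly through the two nested quotients. None of this is deep, but it is the place where one must be scrupulous; everything else is a direct translation of Definition~\ref{defn:perverse_gen} into the language of composition factors and homology supports. I would remark that this is essentially the content of \cite[Lemma 4.19]{chuang-rouquier_perverse17}, and the proof above is a streamlined account of theirs.
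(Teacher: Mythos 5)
The paper does not prove this proposition; it is cited directly from \cite[Lemma~4.19]{chuang-rouquier_perverse17}, so there is no in-paper proof to compare against. Your sketch of the cited argument is broadly on the right track, but there is a genuine gap in the handling of the reverse containment.

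You write that the reverse inclusion $F^{-1}(D^b_{\mathcal{B}_i}(B)) \subseteq D^b_{\mathcal{A}_i}(A)$ follows from condition~(ii) ``together with a counting/dimension argument,'' arguing that the images $F(S_j)$, $j \in I_i$, ``generate the same thick subcategory as the $S'_{j'}$, $j' \in I'_i$.'' A rank count on Grothendieck groups is not enough to conclude equality of thick subcategories: a proper thick subcategory of $D^b_{\mathcal{B}_i}(B)$ can have a Grothendieck group of full rank, so matching cardinalities of the index sets does not by itself force $F(D^b_{\mathcal{A}_i}(A)) = D^b_{\mathcal{B}_i}(B)$. What is actually needed is to show, by induction on $i$, that every simple $S'_{j'}$ with $j' \in I'_i$ lies in the thick subcategory $F(D^b_{\mathcal{A}_i}(A))$, and this uses the \emph{filtration} condition in~(i), not just the bijection in~(ii). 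Concretely: for $j' \in I'_i \setminus I'_{i-1}$ choose the $V$ with $L_2/L_1 \cong S'_{j'}$; the truncation triangles of $F(V)$ eliminate the homology in degrees $t \neq p(i)$ modulo $D^b_{\mathcal{B}_{i-1}}(B)$, which by the inductive hypothesis is already contained in $F(D^b_{\mathcal{A}_{i-1}}(A)) \subseteq F(D^b_{\mathcal{A}_i}(A))$, leaving $H_{p(i)}(F(V))$; then the short exact sequences coming from $0 \subset L_1 \subset L_2 \subset H_{p(i)}(F(V))$ exhibit $S'_{j'}$ as an iterated cone of objects of $F(D^b_{\mathcal{A}_i}(A))$, since $L_1$ and $H_{p(i)}(F(V))/L_2$ are in $\mathcal{B}_{i-1}$. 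This is the step your sketch elides.

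Two minor remarks. First, the proposition here only claims an implication (``if both of the following hold''), not the equivalence of Chuang--Rouquier's original statement, so you are right to argue only sufficiency. Second, your opening remark that one uses the symmetry of $A$ and $B$ ``so that projectives and simples are nicely dual'' is not load-bearing: the argument via Serre subcategories and thick subcategories does not require symmetry at any point, and neither does the cited lemma. The rest of your outline --- reducing the restriction condition to the behaviour on simples, and then showing that $F[p(i)]$ descends to a functor on Serre quotients that is fully faithful and hits all the simples --- is the correct shape of the argument.
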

	
	In fewer words, for every $V \in \mathcal{S}_i\setminus\mathcal{S}_{i-1}$, the composition factors of $H_t(\Phi(V))$ are all in $\mathcal{S}'_{i-1}$, except for a single composition factor of $H_{p(i)}(\Phi(V))$, which lies in $\mathcal{S}'_i\setminus\mathcal{S}'_{i-1}$. 
	
	Perhaps the most significant consequence of Proposition \ref{prop:perversesimp} is that, when $F$ is a perverse equivalence, unlike for a general perverse equivalence, $F$ induces a bijection $\mathcal{S}_i \setminus \mathcal{S}_{i-1} \leftrightarrow \mathcal{S}'_i \setminus \mathcal{S}'_{i-1}$ between the layers of the two filtrations of subsets of simple modules. Gluing these stratified bijections together therefore defines a bijection between $\mathcal{S}$ and $\mathcal{S}'$. This is one sense in which a perverse equivalence gives us more information than an arbitrary equivalence.
	
	One can also rephrase the conditions for perversity in terms of projective modules. The following is taken from \cite[Lemmas 4.7, 4.21]{chuang-rouquier_perverse17}.
	
	Suppose \begin{tikzcd}[cramped, sep=small]F:D^b(A) \arrow[r,"\sim"] & D^b(B)\end{tikzcd} is a perverse equivalence, relative to $(I_\bullet, I'_\bullet, p)$. For each $i$, let $\mathcal{P}_i$ be the set of projective indecomposable $A$-modules $P_V$ corresponding to the simple modules $V \in \mathcal{S} \setminus \mathcal{S}_{t-i}$, and $\mathcal{P}'_i$ the set of projective indecomposable $B$-modules $P_{V'}$ corresponding to the simple modules $V' \in \mathcal{S}' \setminus \mathcal{S}'_{t-i}$. This defines filtrations
	\[
	\emptyset = \mathcal{P}_0 \subset \mathcal{P}_1 \subset \ldots \subset \mathcal{P}_t = \mathcal{P},
	\]
	\[
	\emptyset = \mathcal{P}'_0 \subset \mathcal{P}'_1 \subset \ldots \subset \mathcal{P}'_t = \mathcal{P}'
	\]
	of the sets $\mathcal{P}$ and $\mathcal{P}'$ of projective $A$- and $B$-modules respectively. Define a function $\overbar{p}:\{1,\ldots,r\} \to \mathbb{Z}$ by $\overbar{p}(i) = p(t-i+1)$. 
	
	\begin{prop}\label{prop:perverseproj}
		A derived equivalence \begin{tikzcd}[cramped, sep=small]F:D^b(A) \arrow[r,"\sim"] & D^b(B)\end{tikzcd} is perverse relative to $(I_\bullet, I'_\bullet, \overbar{p})$ if and only if the following hold.
		\begin{enumerate}[label=(\roman*)]
			\item For every $i$ and every indecomposable projective $A$-module $P \in \mathcal{P}_i \setminus \mathcal{P}_{i-1}$, the object $\Phi(P)$ is isomorphic in $D^b(B)$ to a complex $X$ of projective $B$-modules, such that every term of $X$ is a direct sum of modules in $\mathcal{P}'_{i-1}$, except in degree $\overbar{p}(i)$, which has exactly one indecomposable summand in $\mathcal{P}'_i \setminus \mathcal{P}'_{i-1}$, say $P'$, with all others in $\mathcal{P}'_{i-1}$.
			\item The map $P \mapsto P'$ as above defines a bijection between $\mathcal{P}_i \setminus \mathcal{P}_{i-1}$ and $\mathcal{P}'_i \setminus \mathcal{P}'_{i-1}$.
		\end{enumerate}
	\end{prop}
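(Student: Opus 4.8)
The plan is to render both the definition of perversity and the asserted conditions on projectives in a single language, namely graded $\Hom$-multiplicities, and then to compare them layer by layer. Two bookkeeping facts would be isolated first. (a) For $X\in D^b(A)$ and a simple $A$-module $V$ with projective cover $P_V$, one has $\dim_k\Hom_{D^b(A)}(P_V,X[n])=[H_{-n}(X):V]$: this is immediate because $\Hom_A(P_V,-)$ is exact and $\dim_k\Hom_A(P_V,S_W)=\delta_{VW}$ over the algebraically closed field $k$. (b) If $Y\in\Perf(B)$ is represented by its minimal complex $X\in\Ch^b(B\proj)$, then $\dim_k\Hom_{D^b(B)}(Y,S'_{V'}[n])$ equals the multiplicity of $P'_{V'}$ as a summand of the term $X_n$; this uses that a bounded complex of projectives is $K$-projective, that a chain map $X\to S'_{V'}[n]$ sees only the head of $X_n$ (minimality forces the differentials into the radical, so the cycle condition is automatic), and that the same minimality kills every nullhomotopy. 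Finally, $F(P_V)$ does lie in $\Perf(B)=K^b(B\proj)$: by Theorem \ref{thm:standardderived}, $F$ is a standard derived equivalence, so $F(A)$ is perfect over $B$ and $F$ restricts to an equivalence $\Perf(A)\xrightarrow{\sim}\Perf(B)$, and the symmetry of $A$ and $B$ gives $\Perf(A)=K^b(A\proj)$, $\Perf(B)=K^b(B\proj)$, so $F(P_V)$ has a minimal representative in $\Ch^b(B\proj)$.

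Combining (a), (b) and the identity $\Hom_{D^b(B)}(F(P_V),S'_{V'}[n])\cong\Hom_{D^b(A)}(P_V,F^{-1}(S'_{V'})[n])$ (valid since $F$ is an equivalence) shows that, for all $V,V',n$, the multiplicity of $P'_{V'}$ in degree $n$ of the minimal complex of $F(P_V)$ equals $[H_{-n}(F^{-1}(S'_{V'})):V]$. For the direction ``perverse $\Rightarrow$ conditions on projectives'', assume $F$ is perverse relative to the simple-module filtrations $(I_\bullet,I'_\bullet)$ with perversity function $p$; by Lemma \ref{lem:perverseinv}, $F^{-1}$ is perverse relative to $(I'_\bullet,I_\bullet,-p)$, and the necessity form of the simple-module description, Proposition \ref{prop:perversesimp} (cf. \cite[Lemma 4.19]{chuang-rouquier_perverse17}), applied to $F^{-1}$ pins down $[H_{-n}(F^{-1}(S'_{V'})):V]$: if $V'$ lies in the $j$-th layer of $\mathcal{S}'_\bullet$, this vanishes unless $V\in\mathcal{S}_{j-1}$, or $n$ is the exceptional degree and $V\in\mathcal{S}_j$; in the exceptional degree it is at most $1$, and equals $1$ exactly for the perversity partner of $V'$. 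Translating through the dictionary $\mathcal{P}_i\setminus\mathcal{P}_{i-1}=\{P_V:V\in\mathcal{S}_{t-i+1}\setminus\mathcal{S}_{t-i}\}$, $\mathcal{P}'_{i-1}=\{P'_{V'}:V'\notin\mathcal{S}'_{t-i+1}\}$, together with $\overbar{p}(i)=p(t-i+1)$, this is precisely conditions (i) and (ii) for the minimal complex $X\cong F(P)$ with $P\in\mathcal{P}_i\setminus\mathcal{P}_{i-1}$: the single exceptional summand lands in homological degree $\overbar{p}(i)$ and in $\mathcal{P}'_i\setminus\mathcal{P}'_{i-1}$, all other summands (in any degree) lie in $\mathcal{P}'_{i-1}$, and the layerwise bijections of Proposition \ref{prop:perversesimp}(ii) for $F^{-1}$ assemble into the bijection $\mathcal{P}_i\setminus\mathcal{P}_{i-1}\leftrightarrow\mathcal{P}'_i\setminus\mathcal{P}'_{i-1}$ of condition (ii).

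For the converse one runs the computation backwards. Conditions (i)--(ii) on projectives say exactly that the multiplicities $[H_{-n}(F^{-1}(S'_{V'})):V]=\dim_k\Hom_{D^b(B)}(F(P_V),S'_{V'}[n])$ vanish outside the controlled range and take the value $1$ only once, in a single exceptional degree, bijectively. Reading this back through the index dictionary, it is verbatim the hypothesis of Proposition \ref{prop:perversesimp} for the equivalence $F^{-1}\colon D^b(B)\to D^b(A)$ relative to $(I'_\bullet,I_\bullet,-p)$; the only point needing a line of argument is that ``$H$ has a unique composition factor outside the Serre subcategory $\mathcal{A}_{j-1}$, of multiplicity one'' produces the two-step filtration $L_1\subset L_2\subset H$ required by Proposition \ref{prop:perversesimp}(i): take $L_1$ to be the largest submodule of $H$ lying in $\mathcal{A}_{j-1}$, so $\soc(H/L_1)$ contains no simple from $\mathcal{S}_{j-1}$ and hence equals the new simple, and set $L_2/L_1=\soc(H/L_1)$. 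Proposition \ref{prop:perversesimp} then makes $F^{-1}$ perverse relative to $(I'_\bullet,I_\bullet,-p)$, and Lemma \ref{lem:perverseinv} makes $F$ perverse relative to $(I_\bullet,I'_\bullet,p)$, as required.

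The main obstacle I expect is organisational rather than conceptual: one must keep three superimposed reversals consistent --- the filtration reversal built into $\mathcal{P}_i\leftrightarrow\mathcal{A}_{t-i}$, the perversity reversal $\overbar{p}(i)=p(t-i+1)$, and the passage between $F$ and $F^{-1}$ --- so that the exceptional summand really appears in homological degree $\overbar{p}(i)$ with the correct sign, and so that the layerwise bijections match. A secondary care is lemma (b): one genuinely uses minimality at both ends (vanishing of spurious chain maps and of nullhomotopies), and it matters that the Proposition asserts only the \emph{existence} of a suitable complex $X$, so replacing $F(P)$ by its minimal representative costs nothing.
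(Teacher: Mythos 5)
The paper does not prove Proposition \ref{prop:perverseproj}; it cites it to Chuang--Rouquier \cite[Lemmas 4.7, 4.21]{chuang-rouquier_perverse17}, so there is no in-text argument to compare against. Your proposal is a correct, self-contained proof along what I believe is the standard route: convert both sides of the biconditional into statements about the numbers $\dim_k\Hom_{D^b(B)}(F(P_V),S'_{V'}[n])$, compute these once via the minimal complex of $F(P_V)$ (your fact (b)) and once via $\Hom_{D^b(A)}(P_V,F^{-1}(S'_{V'})[n])$ and fact (a), and then invoke Proposition \ref{prop:perversesimp} for $F^{-1}$ together with Lemma \ref{lem:perverseinv}. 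The bookkeeping under the dictionary $\mathcal{P}_i\setminus\mathcal{P}_{i-1}\leftrightarrow\mathcal{S}_{t-i+1}\setminus\mathcal{S}_{t-i}$ and $\overbar{p}(i)=p(t-i+1)$ is handled consistently, and the construction of the two-step filtration $L_1\subset L_2\subset H$ from the multiplicity data is exactly right.

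One small step in the converse direction deserves a sentence rather than a parenthetical. Conditions (i)--(ii) assert only that \emph{some} complex $X\cong F(P)$ of projectives has the stated shape, while your multiplicity count (b) is stated for the \emph{minimal} representative. You should record that the minimal complex is obtained from any $X$ by stripping off contractible summands of the form $P'\xrightarrow{\id}P'$; since $X$ contains only a single copy of the exceptional indecomposable $P'\in\mathcal{P}'_i\setminus\mathcal{P}'_{i-1}$, that copy cannot be cancelled and therefore survives to the minimal representative, and the remaining terms, being summands of terms of $X$, still lie in $\mathcal{P}'_{i-1}$. With that line added, the reduction to the minimal complex is fully justified and the argument closes. (You may also want to note in passing that, as stated, the proposition's use of $\overbar{p}$ both in the phrase ``perverse relative to $(I_\bullet,I'_\bullet,\overbar{p})$'' and in ``degree $\overbar{p}(i)$'' is internally inconsistent with the paper's own setup paragraph; your computation implements the correct reading, namely perversity function $p$ on the simple-module filtrations and exceptional degree $\overbar{p}(i)=p(t-i+1)$ on the projective side.)
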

	
	In particular, gluing these stratified bijections produces a bijection $\mathcal{P} \leftrightarrow \mathcal{P}'$, matching (in reverse) the bijection $\mathcal{S} \leftrightarrow \mathcal{S}'$.

	\subsection{Two-step perverse equivalences}

	Let $A$, $B$ be as before and suppose we have a derived equivalence \begin{tikzcd}[cramped,sep=small]
		F: D^b(A) \arrow[r,"\sim"] & D^b(B).
	\end{tikzcd} Suppose there are filtrations
	\[
	\emptyset \subset_{p(1)} J \subset_{p(2)} I, 
	\]
	\[
	\emptyset \subset_{p(1)} J' \subset_{p(2)} I
	\]
	such that $F$ is perverse relative to $(I_\bullet,I'_\bullet, p)$. Then we say that $F$ is a \emph{two-step perverse equivalence of width $d$}, where $d = p(2) - p(1)$.
	
	Let $M$ be an $A$-module. Take a projective cover \begin{tikzcd}[cramped, sep=small]P(M) \arrow[r,"\pi_M"] & M\end{tikzcd} of $M$. Denote by $M_J$ the largest quotient of $P(M)$ by a submodule of $\ker(\pi_M)$ such that all composition factors of the kernel of the induced map \begin{tikzcd}[cramped,sep=small]M_J \arrow[r] & M\end{tikzcd} are in $J$. Let \begin{tikzcd}[cramped,sep=small]Q_{M,J} \arrow[r,"\pi_{M,J}"] & \ker(\phi_{M,J})\end{tikzcd} be a projective cover of the kernel of the canonical map \begin{tikzcd}[cramped,sep=small]P(M) \arrow[r,"\phi_{M,J}"] & M_J\end{tikzcd}.
	
	\begin{defn}\label{defn:combtilt}
		Given $J \subset I$, the \emph{combinatorial tilting complex at $J$} is the complex
		\[
		T=\bigoplus_{j \in J} T_j \oplus \bigoplus_{i \in I \setminus J}P_i[1],
		\]
		where, for $j \in J$, $T_j$ is the complex
		\begin{center}
			\begin{tikzcd}[sep=small]
				0 \arrow[r] & Q_{S_j,J} \arrow[r] & P_j \arrow[r] & 0,
			\end{tikzcd}
		\end{center}
		concentrated in degrees 1 and 0.
	\end{defn}
	
	We note that Grant \cite[Definition 5.4]{grant_periodic} allows $T=\bigoplus_{i \in I}T_i^{\ell_i}$, where $\ell_i \ge 1$ for all $i$, with $T_i=P_i[1]$ for $i \in I \setminus J$. The complex $T$ with $\ell_i = 1$ is the \emph{basic} combinatorial tilting complex at $J$. 
	
	When $J$ is such that $\Ext^1_A(S_i,S_j) = 0$ for every $i,j \in J$, then for $j \in J$, the complex $T_j$ is \begin{tikzcd}[cramped, sep=small]
		P(\rad(P_j)) \arrow[r,"\pi"] & P_j,
	\end{tikzcd} where \begin{tikzcd}[cramped,sep=small]P(\rad(P_j)) \arrow[r,"\pi"] & \rad(P_j)\end{tikzcd} is a projective cover of $\rad(P_i)$. That is, $P(\rad(P_j)) = \bigoplus_{i \in I \setminus J} P_i \otimes_k \Ext^1_A(S_i,S_j)$, where $\Ext^1_A(S_i,S_j)$ is the \emph{multiplicity module}.
	
	Combinatorial tilting complexes were introduced by Rickard \cite{rickard_thesis} for $J$ a single index, generalised to arbitrary subsets $J$ by Okuyama \cite{okuyama_method}. They are also called \emph{Okuyama-Rickard two-term tilting complexes}, among other names. Combinatorial tilting complexes have wide-reaching applications, for example in silting theory \cite{aihara-iyama_silting} and cluster tilting theory \cite{BMRRT_clustertilting}.
	
	Given a subset $J$, the basic combinatorial tilting complex $T$ at $J$ exists and is unique up to isomorphism, \cite[Lemma 5.5, Corollary 5.7]{grant_periodic}. Further. $T$ is a tilting complex, \cite[Proposition 1.1]{okuyama_method}, \cite[Proposition 5.6]{grant_periodic}. Thus, by Theorem \ref{thm:derivedmorita}, given a combinatorial tilting complex $T$, there is an algebra $B = \opcat{\End_{D^b(A)}(T)}$ such that there is a derived equivalence \begin{tikzcd}[cramped,sep=small]F_J:D^b(A) \arrow[r,"\sim"] & D^b(B)\end{tikzcd}. We call $F_J$ the \emph{combinatorial tilt of $A$ at $J$}.
	
	Two-step perverse equivalences of width $-1$ coincide with combinatorial tilts. The following can be found in \cite[Proposition 5.3]{chuang-rouquier_perverse17}.
	
	\begin{prop}\label{prop:combtiltperverse}
		Let $A$ be a finite-dimensional symmetric $k$-algebra. Let $I$ be an indexing set of the isomorphism classes of simple $A$-modules. Given $J \subset I$, the combinatorial tilt \begin{tikzcd}[cramped,sep=small]F_J:D^b(A) \arrow[r,"\sim"] & D^b(B)\end{tikzcd} at $J$ is a perverse equivalence, with filtrations both given by $\emptyset \subset_0 J \subset_{-1} I$.
	\end{prop}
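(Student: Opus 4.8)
The plan is to verify the projective-module form of perversity, Proposition~\ref{prop:perverseproj}, since the summands of the combinatorial tilting complex are built from projectives and are carried by $F_J$ to the indecomposable projective $B$-modules. Write $T=\bigoplus_{i\in I}\tilde T_i$ with $\tilde T_j=T_j$ for $j\in J$ and $\tilde T_i=P_i[1]$ for $i\in I\setminus J$. I would first check that each $\tilde T_i$ is indecomposable and that the $\tilde T_i$ are pairwise non-isomorphic: for $T_j$, any direct-sum decomposition of the complex would, since $P_j$ is indecomposable and the differential $Q_{S_j,J}\to P_j$ is a projective cover of its image, produce a nonzero summand of $Q_{S_j,J}$ lying in $\rad Q_{S_j,J}$, which Nakayama forbids; and $\tilde T_j\not\cong\tilde T_{j'}$ for $j\ne j'$ and $\tilde T_j\not\cong\tilde T_i$ for $i\in I\setminus J$ because $H_0(T_j)=(S_j)_J$ has top $S_j$ whereas $H_0(P_i[1])=0$. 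Since $F_J(T)\cong B$ as $B$-modules by Theorem~\ref{thm:derivedmorita}, Krull--Schmidt then shows that $P'_i\defeq F_J(\tilde T_i)$, $i\in I$, is a complete irredundant list of indecomposable projective $B$-modules; write $S'_i$ for the corresponding simple. I take both filtrations of simple modules to be $\emptyset\subset J\subset I$, on the $A$-side via the given labelling and on the $B$-side via $i\mapsto S'_i$, so that (with $t=2$) the induced filtrations of projectives have first layers $\{P_i:i\in I\setminus J\}$, $\{P'_i:i\in I\setminus J\}$ and second layers $\{P_j:j\in J\}$, $\{P'_j:j\in J\}$.

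Next I would compute $F_J$ on the indecomposable projective $A$-modules. For $i\in I\setminus J$ the relation $\tilde T_i=P_i[1]$ gives $F_J(P_i)\cong P'_i[-1]$, a stalk complex of projectives in degree $-1$. For $j\in J$, the complex $T_j$ is by definition the cone of its differential $d\colon Q_{S_j,J}\to P_j$, yielding a triangle
\[
Q_{S_j,J}\xrightarrow{\ d\ }P_j\longrightarrow T_j\longrightarrow Q_{S_j,J}[1]
\]
in $D^b(A)$; applying $F_J$ and rotating presents $F_J(P_j)$ as the cone of a morphism $P'_j[-1]\to F_J(Q_{S_j,J})$. Here the crucial point is that $Q_{S_j,J}$ is a direct sum of copies of the $P_i$ with $i\in I\setminus J$. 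This holds because $Q_{S_j,J}$ is a projective cover of $U\defeq\ker(\phi_{S_j,J})$, and $U$ has no simple quotient indexed by $J$: if $U\twoheadrightarrow S_a$ with $a\in J$ had kernel $U'\subsetneq U$, then $P_j/U'$ would be a quotient of $P_j$ strictly larger than $(S_j)_J$ whose radical still has all composition factors in $J$, contradicting the maximality defining $(S_j)_J$. Consequently $F_J(Q_{S_j,J})$ is a stalk complex in degree $-1$ with term in $\{P'_i:i\in I\setminus J\}\cadd$, and unwinding the cone (and the shift) shows $F_J(P_j)$ is isomorphic in $D^b(B)$ to a two-term complex of projectives whose degree-$0$ term is $P'_j$ and whose degree-$(-1)$ term lies in $\{P'_i:i\in I\setminus J\}\cadd$.

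Finally I would feed these computations into Proposition~\ref{prop:perverseproj} with $\overbar p(1)=-1$ and $\overbar p(2)=0$. For the first layer of the projective filtration, condition~(i) is exactly $F_J(P_i)\cong P'_i[-1]$ with $P'_i$ the unique (indecomposable) term, sitting in degree $\overbar p(1)=-1$ and in the first layer $\{P'_i:i\in I\setminus J\}$; for the second layer it is exactly the description of $F_J(P_j)$ just obtained, whose degree-$\overbar p(2)=0$ term is the single indecomposable $P'_j$ in the second layer, all of whose other terms lie in the first layer. Condition~(ii) holds since $i\mapsto i$ is a bijection on each layer. Hence $F_J$ is a perverse equivalence; and since $\overbar p(i)=p(t-i+1)$ forces $p(1)=\overbar p(2)=0$ and $p(2)=\overbar p(1)=-1$, its perversity data is $\emptyset\subset_0 J\subset_{-1}I$ on both sides, as claimed. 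I expect the one genuinely non-formal step to be the combinatorial identification of $Q_{S_j,J}$ with a direct sum of projectives indexed outside $J$ (equivalently, that the top of $\ker(\phi_{S_j,J})$ avoids $J$); everything else is bookkeeping with a single distinguished triangle, a single cone, and the shift by one.
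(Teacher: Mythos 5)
Your proof is correct. The paper does not itself supply an argument for this proposition (it simply cites \cite[Proposition 5.3]{chuang-rouquier_perverse17}), but the route you take --- checking the projective-side criterion of Proposition~\ref{prop:perverseproj} directly on the summands of the combinatorial tilting complex --- is the standard one and is executed accurately. You rightly isolate the one genuinely non-formal point, namely that $Q_{S_j,J}$ lies in $\{P_i\}_{i\in I\setminus J}\cadd$, equivalently that $U=\ker(\phi_{S_j,J})$ has no simple quotient $S_a$ with $a\in J$; your minimality argument is sound, since if such a quotient with kernel $U'\subsetneq U$ existed then $\rad(P_j)/U'$ would be an extension of $\rad(P_j)/U$ by $S_a$ and hence still have all composition factors in $J$, contradicting the maximality defining $(S_j)_J$.

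One small technical remark worth making explicit if you write this up: the Nakayama argument gives indecomposability of $T_j$ in the category of chain complexes, and since $T_j$ is a minimal complex of projectives (its differential factors through $\ker(\phi_{S_j,J})\subseteq\rad P_j$), this is equivalent to indecomposability in $K^b(A\proj)$ and hence in $D^b(A)$, which is the form actually needed for the Krull--Schmidt identification of $P'_i=F_J(\tilde T_i)$ with the complete set of indecomposable projective $B$-modules. Your translation from the projective-side data $(\mathcal{P}_\bullet,\overbar p)$ back to the perversity function $p$ with $p(1)=0$, $p(2)=-1$ is likewise correct.
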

	
	Chuang and Rouquier call these combinatorial tilts \emph{elementary perverse equivalences}.

	\subsection{Standard equivalences}\label{subsec:standardperverse}

	Suppose \begin{tikzcd}[cramped,sep=small]F:D^b(A) \arrow[r,"\sim"] & D^b(B)\end{tikzcd} is a derived equivalence. Recall by Theorem \ref{thm:standardderived} that there is a complex $X$ of $B$-$A$-bimodules such that, for every $V \in D^b(A)$, $F(V) \cong X \ltens_A V$ in $D^b(B)$. 
	
	\begin{prop}\label{prop:perversestandard}
		If the equivalence $F$ is perverse relative to $(I_\bullet, I'_\bullet, p)$, then the equivalence \begin{tikzcd}[cramped,sep=small]
			X \ltens_A -: D^b(A) \arrow[r,"\sim"] & D^b(B)
		\end{tikzcd} is perverse, with the same perversity data.
	\end{prop}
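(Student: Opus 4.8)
The plan is to check conditions (i) and (ii) of Definition~\ref{defn:perverse_gen} directly for the functor $G \defeq X \ltens_A -$, using only that $G$ is an equivalence and that $G(V) \cong F(V)$ in $D^b(B)$ for every object $V$. Let $\tilde X$ denote the inverse two-sided tilting complex of $X$, so that $G$ has quasi-inverse $\tilde G \defeq \tilde X \ltens_B -$; composing the isomorphisms $G(V) \cong F(V)$ with $F^{-1}$ shows that $\tilde G(W) \cong F^{-1}(W)$ for every object $W$ of $D^b(B)$ as well. So both $G$ and $\tilde G$ are isomorphic, on objects, to $F$ and $F^{-1}$ respectively.

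For condition (i), fix $i$. Since $D^b_{\mathcal{B}_i}(B)$ is thick, hence closed under isomorphism, and $G(V) \cong F(V)$, the hypothesis that $F$ sends $D^b_{\mathcal{A}_i}(A)$ into $D^b_{\mathcal{B}_i}(B)$ forces $G$ to do the same. The quasi-inverse $F^{-1}$ likewise restricts to an equivalence $D^b_{\mathcal{B}_i}(B) \xrightarrow{\sim} D^b_{\mathcal{A}_i}(A)$, so in particular $\tilde G$ sends $D^b_{\mathcal{B}_i}(B)$ into $D^b_{\mathcal{A}_i}(A)$. The unit and counit isomorphisms $\tilde G G \cong \id$ and $G \tilde G \cong \id$ restrict to these subcategories, so the restricted functors are mutually quasi-inverse and $G$ restricts to an equivalence $D^b_{\mathcal{A}_i}(A) \xrightarrow{\sim} D^b_{\mathcal{B}_i}(B)$.

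For condition (ii), apply the case just established at level $i-1$: an equivalence of triangulated categories carrying one thick subcategory equivalently onto another descends to an equivalence of Verdier quotients, so $G$ induces an equivalence $\bar G_{i-1}\colon D^b(A)/D^b_{\mathcal{A}_{i-1}}(A) \xrightarrow{\sim} D^b(B)/D^b_{\mathcal{B}_{i-1}}(B)$, and $F$ induces one $\bar F_{i-1}$ in the same way. Both localisation functors are the identity on objects and carry isomorphisms of $D^b(B)$ to isomorphisms of the quotient, so $\bar G_{i-1}$ and $\bar F_{i-1}$ agree on objects up to isomorphism. Now the assertion ``$F[p(i)]$ induces an equivalence $\mathcal{A}_i/\mathcal{A}_{i-1} \to \mathcal{B}_i/\mathcal{B}_{i-1}$'' unwinds, via the diagram in the text, to: the composite of the fully faithful embedding $\mathcal{A}_i/\mathcal{A}_{i-1} \hookrightarrow D^b(A)/D^b_{\mathcal{A}_{i-1}}(A)$ with $\bar F_{i-1}[p(i)]$ factors, up to isomorphism, through the fully faithful embedding $\mathcal{B}_i/\mathcal{B}_{i-1} \hookrightarrow D^b(B)/D^b_{\mathcal{B}_{i-1}}(B)$, and the induced functor is an equivalence. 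Since the embedded copy of $\mathcal{B}_i/\mathcal{B}_{i-1}$ is a full subcategory closed under isomorphism, ``factoring through it up to isomorphism'' depends only on which objects appear (up to isomorphism) in the image; as $\bar G_{i-1}[p(i)]$ and $\bar F_{i-1}[p(i)]$ have the same image objects up to isomorphism, $\bar G_{i-1}[p(i)]$ factors the same way. The resulting functor $\mathcal{A}_i/\mathcal{A}_{i-1} \to \mathcal{B}_i/\mathcal{B}_{i-1}$ is fully faithful, being a restriction of the equivalence $\bar G_{i-1}[p(i)]$, and essentially surjective, since its essential image, computed on objects, coincides with that of $\bar F_{i-1}[p(i)]$, which is all of $\mathcal{B}_i/\mathcal{B}_{i-1}$. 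Hence condition (ii) holds for $G$, and $G = X \ltens_A -$ is perverse relative to $(I_\bullet, I'_\bullet, p)$.

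I expect the only genuine obstacle to be the bookkeeping in condition (ii): one must isolate precisely which part of ``$F[p(i)]$ induces an equivalence of Serre quotients'' is insensitive to replacing $F$ by a functor merely pointwise isomorphic to it rather than naturally isomorphic. What makes this go through is that the embedded Serre quotients sit inside the Verdier quotients as full subcategories closed under isomorphism, so membership of an image object there is isomorphism-invariant, and that an equivalence restricted to such a subcategory is an equivalence onto its (isomorphism-closed) essential image. A shorter route would be to note that $H_t(X \ltens_A V) \cong H_t(F(V))$ for every simple $A$-module $V$ and invoke a simple-module criterion for perversity, but Proposition~\ref{prop:perversesimp} is stated only as a sufficient condition, and proving the converse direction would require essentially the same argument as above.
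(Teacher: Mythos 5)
Your proof is correct, and it takes a genuinely different route from the paper's. The paper's proof is two sentences: it invokes Proposition~\ref{prop:perversesimp} to reduce perversity to a condition on the images of the simple $A$-modules, observes that $X \ltens_A S_i \cong F(S_i)$, and concludes. You correctly noticed that this requires knowing $F$ being perverse \emph{implies} the simple-module conditions hold for $F$, i.e.\ the necessity direction, whereas Proposition~\ref{prop:perversesimp} as stated in this paper is only a sufficiency (``is perverse if both of the following hold''). The cited source, Chuang--Rouquier's Lemma~4.19, does prove the biconditional, so the paper's proof is not wrong so much as resting on the unstated converse. Your approach bypasses the gap by verifying both clauses of Definition~\ref{defn:perverse_gen} directly from object-wise agreement of $G=X\ltens_A-$ with $F$: for (i), thick subcategories are closed under isomorphism, so $G$ lands where $F$ does, and the restricted quasi-inverse pair gives the equivalence; for (ii), the factorisation through the embedded Serre quotient is a condition on essential image, hence isomorphism-invariant on objects, and full faithfulness of the factored functor follows from full faithfulness of the embedding and of the ambient equivalence. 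What your version buys is self-containment relative to what the paper has actually stated, and it makes explicit the conceptually important point that perversity data is genuinely a property of a functor up to object-wise isomorphism rather than up to natural isomorphism. The cost is length. If you wanted to contribute something useful to the paper you could instead propose upgrading Proposition~\ref{prop:perversesimp} to its biconditional form (which is what the cited lemma asserts), after which the paper's two-line argument would be watertight.
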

	
	\begin{proof}
		By Proposition \ref{prop:perversesimp}, the perversity of the derived equivalence $X \ltens_A -$ depends only on the images of simple $A$-modules. But for every simple $A$-module $S_i$, $X \ltens_A S_i \cong F(S_i)$. The result follows.
	\end{proof}
	
	Let $X$ be as above. Set $X^\vee=\rhom_B(X,B)$, a complex of $A$-$B$-bimodules. Then by \cite[Proposition 4.1]{rickard_derivedfunctors}, \begin{tikzcd}[cramped, sep=small]
		X^\vee \ltens_B -: D^b(B) \arrow[r,"\sim"] & D^b(A)
	\end{tikzcd} is a derived equivalence, mutually inverse with the equivalence	$X \ltens_A -$, and is the standard derived equivalence agreeing with $F^{-1}$ on objects of $D^b(B)$. By Lemma \ref{lem:perverseinv} and Proposition \ref{prop:perverseproj}, $X^\vee \ltens_B -$ and $F^{-1}$ are both perverse relative to $(I'_\bullet, I_\bullet, -p)$. 
	
	The two-sided tilting complexes $X$ and $X^\vee$ also induce perverse equivalences on the derived categories of right modules. The following combines \cite[Lemma 4.3]{rickard_derivedfunctors} and \cite[Lemma 4.20]{chuang-rouquier_perverse17}.
	
	\begin{prop}\label{prop:rightmodperverse}
		The functor \begin{tikzcd}[cramped, sep=small]
			- \ltens_B X: D^b(\opcat{B}) \arrow[r] & D^b(\opcat{A})
		\end{tikzcd} is an equivalence, and is perverse relative to $(I_\bullet, I'_\bullet, -p)$. Similarly, \begin{tikzcd}[cramped,sep=small]
			- \ltens_A X^\vee: D^b(\opcat{A}) \arrow[r] & D^b(\opcat{B})
		\end{tikzcd} is an equivalence, perverse relative to $(I'_\bullet, I_\bullet, p)$. Moreover, these two equivalences are mutually inverse.
	\end{prop}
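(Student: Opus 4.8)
The statement decomposes into an equivalence claim and a perversity claim, which I would treat separately. For the former I would argue as in \cite[Lemma 4.3]{rickard_derivedfunctors}: recall from the preceding discussion that $X$ is a two-sided tilting complex with inverse $X^\vee$, so $X \ltens_A X^\vee \cong B$ in $\bbimder{B}{B}$ and $X^\vee \ltens_B X \cong A$ in $\bbimder{A}{A}$. Regarding $X$ as an $\opcat{A}$-$\opcat{B}$-bimodule, the functor $- \ltens_B X$ is the standard functor $X \ltens_{\opcat{B}} -$ between the module categories of the opposite algebras, and associativity of the derived tensor product yields, for $N \in D^b(\opcat{B})$, natural isomorphisms $(N \ltens_B X) \ltens_A X^\vee \cong N \ltens_B (X \ltens_A X^\vee) \cong N$, and symmetrically for $N' \in D^b(\opcat{A})$. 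Hence $- \ltens_B X$ and $- \ltens_A X^\vee$ are mutually inverse equivalences. Note moreover that, since these two functors are mutually inverse, Lemma \ref{lem:perverseinv} shows that once either of the two perversity statements is established the other follows from it, the data $(I_\bullet, I'_\bullet, -p)$ and $(I'_\bullet, I_\bullet, p)$ being exchanged by inversion.

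For the perversity I would transport the problem to left modules along the $k$-duality $D = \Hom_k(-,k)$, where $X \ltens_A -$ and $X^\vee \ltens_B -$ are already known to be perverse, relative to $(I_\bullet, I'_\bullet, p)$ and $(I'_\bullet, I_\bullet, -p)$ respectively. Tensor--hom adjunction gives $\Hom_k(N \ltens_B X, k) \cong \rhom_B(X, \Hom_k(N,k))$, and since $X$ is perfect in $D^b(B)$ (being a two-sided tilting complex) this is $\rhom_B(X,B) \ltens_B \Hom_k(N,k) = X^\vee \ltens_B \Hom_k(N,k)$; thus $D \circ (- \ltens_B X) \cong (X^\vee \ltens_B -) \circ D$. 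Taking inverses (or repeating the computation with $X^\vee$ in place of $X$ and using biduality) gives $D \circ (- \ltens_A X^\vee) \cong (X \ltens_A -) \circ D$. The duality $D$ is an exact contravariant equivalence interchanging left and right modules; it carries simple modules to simple modules, Serre subcategories to Serre subcategories, the thick subcategories $D^b_{\mathcal{A}_i}(A)$ to their right-module analogues, and satisfies $D(M[n]) \cong D(M)[-n]$. Using Proposition \ref{prop:perversesimp} to reduce the verification of perversity to the images of the simple modules --- exactly as in the proof of Proposition \ref{prop:perversestandard} --- the known perversity of $X \ltens_A -$ and of $X^\vee \ltens_B -$ transports through $D$ to the perversity data asserted for $- \ltens_A X^\vee$ and $- \ltens_B X$ respectively; this matching is \cite[Lemma 4.20]{chuang-rouquier_perverse17}.

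I expect the bookkeeping of the perversity data under $D$ to be the delicate point. One must check that the degree reversal $D(M[n]) \cong D(M)[-n]$ negates the perversity function, and that interchanging left and right modules --- under the identification of index sets furnished by the layered bijection of $X \ltens_A -$ --- swaps the two filtrations, so that the net outcome is precisely $(I_\bullet, I'_\bullet, -p)$ for $- \ltens_B X$ and $(I'_\bullet, I_\bullet, p)$ for $- \ltens_A X^\vee$. One should also confirm that condition (i) of Definition \ref{defn:perverse_gen} --- the restriction of the functor to each $D^b_{\mathcal{A}_i}$ --- is preserved, not merely condition (ii); this is automatic once one knows that $D$ carries Serre subcategories to Serre subcategories and commutes with the tilting complexes as above, but it deserves an explicit remark. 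The remaining technical input is the justification of the isomorphism $\Hom_k(N \ltens_B X, k) \cong X^\vee \ltens_B \Hom_k(N,k)$ in the derived category, together with its naturality in $N$, for which one represents $X$ by a bounded complex of finitely generated projective $B$-modules.
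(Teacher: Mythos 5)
Your decomposition --- equivalence via Rickard, perversity via $k$-duality conjugation in the spirit of Chuang--Rouquier's Lemma 4.20 --- is the right one (the paper gives no proof, only the two citations), and the identity $D \circ (- \ltens_B X) \cong (X^\vee \ltens_B -) \circ D$ via tensor--hom adjunction and perfectness of $X$ is correctly derived.

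The gap is in the final bookkeeping, and it is one you almost catch yourself. You correctly point out that $D(M[n]) \cong D(M)[-n]$ negates the perversity function, but the outcome you then write down does not apply that negation. Your conjugation identity makes $- \ltens_B X$ a $D$-conjugate of $X^\vee \ltens_B -$, which is perverse relative to $(I'_\bullet, I_\bullet, -p)$; negating the perversity gives $p$, not $-p$. (Also, under $k$-duality the simple $\opcat{B}$-modules inherit the indexing of the simple $B$-modules, so the source filtration for $- \ltens_B X$ is $I'_\bullet$; your claim that $D$ ``swaps the two filtrations'' is not right as stated.) The shift example is decisive: take $A = B$ and $X = A[n]$, so $X \ltens_A - = [n]$ with $p \equiv n$; then $- \ltens_A X = [n]$ as well, so its perversity is $n$, not $-n$. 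The correct conclusion is that $- \ltens_B X$ is perverse relative to $(I'_\bullet, I_\bullet, p)$ and $- \ltens_A X^\vee$ relative to $(I_\bullet, I'_\bullet, -p)$: the proposition as printed has interchanged the perversity data of the two mutually inverse functors. The paper's own use of the proposition --- in the proof of Theorem \ref{thm:perversetoperiodic}, where $- \ltens_A X$ is taken to be perverse with filtration $0 \subset_0 \mathcal{A}'_1 \subset_n \opcat{A}\modcat$, with \emph{positive} shift $n$ --- already agrees with the corrected version. Your method is sound; the error is that you have reverse-engineered the conclusion to match a misprinted statement rather than carrying your own calculation through to its end.
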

	
	Thus, the equivalences $F$ and $F^{-1}$ induce equivalences \begin{tikzcd}[cramped, sep=small]
		\tilde{F}: D^b(\opcat{B}) \arrow[r] & D^b(\opcat{A}),
	\end{tikzcd} perverse relative to $(I_\bullet, I'_\bullet, -p)$, and \begin{tikzcd}[cramped, sep=small]
		\tilde{F}^{-1}: D^b(\opcat{A}) \arrow[r] & D^b(\opcat{B}),
	\end{tikzcd}  perverse relative to $(I'_\bullet, I_\bullet, p)$.

	\section{Periodic Modules and Algebras}\label{sec:periodicmodulesalgebras}

	\subsection{Periodic modules}

	Let $E$ be a finite-dimensional $k$-algebra and $M$ an $E$-module. If $\sigma$ is an automorphism of $E$, then we define the \emph{twisted module} ${}_{\sigma\!}M$ to be the $E$-module with $E$-action $x \cdot m = \sigma(x)m$ for $x \in E$, $m \in M$. We can adapt this definition for right modules or bimodules in the obvious way. For any $E$-module $M$, we have an isomorphism ${}_{\sigma\!}M \cong {}_{\sigma\!}E \otimes_E M$ of $E$-modules.
	
	Recall that the Heller translate of $M$ is $\Omega_E(M) = \ker(\pi_M)$, where \begin{tikzcd}[cramped,sep=small]P(M) \arrow[r,"\pi_M"] & M\end{tikzcd} is a projective cover of $M$. One can iterate this construction: for $n \ge 1$, we set
	\[
	\Omega_E^{n+1}(M) = \Omega_E(\Omega_E^n(M)).
	\]
	
	\begin{defn}
		The $E$-module $M$ is \emph{$\sigma$-periodic of period $n$} if there is an automorphism $\sigma$ of $E$ and an $n\ge 1$ such that $\Omega_E^n(M) \cong {}_{\sigma\!}M$.
	\end{defn}
	
	That is, $M$ is $\sigma$-periodic of period $n$ if there is an exact sequence
	\begin{center}
		\begin{tikzcd}[sep=small]
			0 \arrow[r] & {}_{\sigma\!}M \arrow[r] & P_{n-1} \arrow[r] & \ldots \arrow[r] & P_1 \arrow[r] & P_0 \arrow[r] & M \arrow[r] & 0
		\end{tikzcd}
	\end{center}
	of $E$-modules such that each $P_i$ is projective. We call the complex
	\begin{center}
		\begin{tikzcd}[sep=small]
			P_{n-1} \arrow[r] & \ldots \arrow[r] & P_1 \arrow[r] & P_0
		\end{tikzcd}
	\end{center}
	a \emph{truncated projective resolution of $M$}. If $\sigma =\id$, then we say simply that $M$ is \emph{periodic}. 
	
	We note that, if there is some $n$ such that $M$ is $\sigma$-periodic of period $n$ for some automorphism $\sigma$, then there must exist some minimal such $n$. We emphasise that we do not demand minimality, as it may sometimes be expedient to consider different periodicities of the same $M$ at once. We also note that there may be different automorphisms $\sigma$ of $E$ for which $M$ is $\sigma$-periodic.
	
	There is an obvious dual definition for right modules. The $\opcat{E}$-module $N$ is \emph{$\tau$-periodic of period $n$} for an automorphism $\tau$ of $E$ and $n \in \mathbb{Z}_+$ if $\Omega^n_{\opcat{E}}(N) \cong N_\tau$.

	\subsection{Periodic and relatively periodic algebras}

	Let $E$ be a finite-dimensional $k$-algebra.
	
	\begin{defn}
		We say that $E$ is \emph{$\sigma$-periodic of period $n$} if there is an automorphism $\sigma$ of $E$ and an $n \ge 1$ such that $E$ is $\sigma \otimes \id_E$-periodic of period $n$ as an $E \otimes \opcat{E}$-module.
	\end{defn}
	
	That is, $E$ is $\sigma$-periodic of period $n$ if there is an exact sequence of $E$-$E$-bimodules
	\begin{center}
		\begin{tikzcd}[sep=small]
			0 \arrow[r] & _{\sigma\!}E \arrow[r] & Y_{n-1} \arrow[r] & \ldots \arrow[r] & Y_1 \arrow[r] & Y_0 \arrow[r] & E \arrow[r] & 0 
		\end{tikzcd}
	\end{center}
	such that each $Y_i$ is projective as an $E$-$E$-bimodule.
	
	A survey of symmetric algebras with this property can be found in \cite{erdmann-skowronski_periodic}. It remains an interesting open problem to classify the finite-dimensional periodic algebras.
	
	If $E$ is a finite-dimensional $k$-algebra and there exists an automorphism $\sigma$ of $E$ such that $E$ is $\sigma$-periodic of period $n$, then every $E$-module $M$ is $\sigma$-periodic of period $n$. Indeed, one can apply the functor $- \otimes_E M$ to the exact sequence of projective $E$-$E$-bimodules above, to obtain an exact sequence
	\begin{center}
		\begin{tikzcd}[sep=small]
			0 \arrow[r] & _{\sigma\!}M \arrow[r] & Y_{n-1} \otimes_E M \arrow[r] & \ldots \arrow[r] & Y_1 \otimes_E M \arrow[r] & Y_0 \otimes_E M \arrow[r] & M \arrow[r] & 0,
		\end{tikzcd}
	\end{center}
	where every term $Y_i \otimes_E M$ is projective as an $E$-module. This is thus a truncated projective resolution of $M$, and we have $\Omega_E^n(M) \cong {}_{\sigma\!}M$.
	
	We also have a notion of relative periodicity.
	
	\begin{defn}
		Let $B$ be a subalgebra of $E$. Then $E$ is $\sigma$-periodic of period $n$ \emph{relative to $B$} if there is an exact sequence of $E$-$E$-bimodules
		\begin{center}
			\begin{tikzcd}[sep=small]
				0 \arrow[r] & _{\sigma\!}E \arrow[r] & Y_{n-1} \arrow[r] & \ldots \arrow[r] & Y_1 \arrow[r] & Y_0 \arrow[r] & E \arrow[r] & 0 
			\end{tikzcd}
		\end{center}
		such that each $Y_i$ is a direct summand of the $E$-$E$-bimodule $E \otimes_B E$.
	\end{defn}
	
	Setting $B = k$, we recover the usual notion of $\sigma$-periodicity.
	
	\subsection{A theorem of Grant}
	
	The pre-established link between periodicity and perversity for symmetric algebras is the following result of Grant, \cite[Theorem 3.9, Proposition 3.22]{grant_periodic}, \cite[Theorem 4.3]{grant_thesis}.
	
	\begin{thm}\label{thm:grantperiodic}
		Let $A$ be a finite-dimensional symmetric $k$-algebra. Let $P$ be a projective $A$-module and set $E=\opcat{\End_A(P)}$. If there is subalgebra $B$ of $E$ such that $P$ is projective as a $\opcat{B}$-module and $P^\vee$ is projective as a $B$-module, and there is an automorphism $\sigma$ of $E$ and $n \in \mathbb{Z}_+$ such that $E$ is $\sigma$-periodic of period $n$ relative to $B$, then there is a two-step self-perverse equivalence \begin{tikzcd}[cramped, sep=small]
			\Psi_P : D^b(A) \arrow[r,"\sim"] & D^b(A)
		\end{tikzcd}  of width $n$.
	\end{thm}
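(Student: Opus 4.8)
The plan is to build an explicit two-sided tilting complex from the relative periodicity data and verify its perversity via Proposition~\ref{prop:perverseproj}. Write $A \cong P \oplus Q$ as $A$-modules (collapsing multiplicities so that $P$ carries one copy of each projective indecomposable whose corresponding simple lies in the appropriate part of the filtration), and set $E = \opcat{\End_A(P)}$, $B$ the given subalgebra, $J \subset I$ the index set of the simples corresponding to the summands of $P$. The relative periodicity sequence $0 \to {}_{\sigma\!}E \to Y_{n-1} \to \cdots \to Y_0 \to E \to 0$ with each $Y_i$ a summand of $E \otimes_B E$ is the raw material. The key observation is that, because $A$ is symmetric and $P$ is projective over $\opcat{B}$ with $P^\vee$ projective over $B$, the functor $P \otimes_E -$ (equivalently $\Hom_A(P^\vee, -)$, using the symmetry of $A$ via Theorem~\ref{thm:symmetric}) sends the bimodule $E \otimes_B E$ to a bimodule of the form $P \otimes_B P^\vee$, which is a summand of $A \otimes_k A$ — i.e.\ a projective $A$-$A$-bimodule — and sends ${}_{\sigma\!}E$ to a bimodule $M_\sigma$ whose underlying complex, viewed on either side, is a summand of $A$ up to the twist. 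Applying $P \otimes_E - \otimes_E P^\vee$ (or the appropriate one-sided version) to the truncation $Y_{n-1} \to \cdots \to Y_0$ produces a bounded complex $T$ of $A$-$A$-bimodules, projective on each side, concentrated in degrees $0, \ldots, n-1$.

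First I would check that $T$ is a two-sided tilting complex. The strategy is to exhibit its inverse directly: the dual relative-periodicity sequence for $\opcat{E}$ (obtained by applying $\Hom_E(-,E)$, which is exact on the projective-over-$B$ terms and uses that $E$ is symmetric, as noted after Theorem~\ref{thm:symmetric}) gives a complex $\tilde{T}$, and the composite $T \ltens_A \tilde{T}$ should be computed to be $A$ in $\bbimder{A}{A}$ by splicing the two periodicity sequences and using $\Omega_E^n(E) \cong {}_{\sigma\!}E$ to contract. Concretely, $T \ltens_A \tilde T$ is the bimodule analogue of the standard computation that a truncated periodic resolution tensored with its dual recovers the algebra up to the twist $\sigma$, and the twist is absorbed because $\sigma$ is an algebra automorphism (so ${}_{\sigma\!}E$ is invertible as a bimodule). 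Alternatively, and perhaps more cleanly, I would verify the one-sided tilting conditions of Theorem~\ref{thm:derivedmorita} for the one-sided complex $X = P \otimes_E Y_\bullet$: the vanishing of self-extensions in nonzero degrees follows from the fact that $Y_\bullet$ is a truncated projective resolution, so $\Hom$ complexes compute $\Ext^*_E$-groups that vanish in the relevant range, and the generation condition follows since $P$ generates the projective modules indexed by $J$ while the complement $\bigoplus_{i \notin J} P_i[1]$ is thrown in by hand, exactly as in Definition~\ref{defn:combtilt}.

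Next I would identify the target: $\opcat{\End_{D^b(A)}(T)}$ should be shown isomorphic to $A$ itself, so that $T$ induces a self-equivalence $\Psi_P : D^b(A) \to D^b(A)$. This is where the hypothesis that $\Omega_E^n(E) \cong {}_{\sigma\!}E$ — rather than merely $\Omega_E^n(E)$ being projective-equivalent to $E$ — does its work: the endomorphism algebra of the tilting complex picks up $\End$ of the truncation, which by the periodicity is identified with $\opcat{E}$ twisted by $\sigma$, reassembled with the complementary summands to give $A$ back (up to the automorphism, which is an inner-to-Morita adjustment). Finally, the perversity claim: by Proposition~\ref{prop:perverseproj} it suffices to examine $\Psi_P$ on indecomposable projectives. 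For $P_i$ with $i \notin J$, $\Psi_P(P_i) = P_i[1]$, contributing a single layer with shift; for $P_j$ with $j \in J$, $\Psi_P(P_j)$ is the complex $P \otimes_E Y_\bullet$-strand, which is concentrated in degrees $0$ through $n-1$ with the top term in degree $n-1$ the unique ``new'' projective and all intermediate terms built from $\mathcal{P}_{i-1}$ — this is precisely the two-step perversity pattern with $p(1) = 0$, $p(2) = n - 1$, wait: re-examining the width convention (width $=p(2)-p(1)$, and combinatorial tilts have width $-1$), the filtration is $\emptyset \subset_{-(n-1)} J' \subset_{?} I$ read on the projective side, giving width $n$ after the sign bookkeeping of $\overbar p$ in Proposition~\ref{prop:perverseproj}. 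The map on the ``new'' projectives is a bijection because it is realized by an equivalence. The main obstacle I anticipate is the bimodule bookkeeping in the second paragraph — showing that $T \ltens_A \tilde T \cong A$ requires care that the twist by $\sigma$ does not obstruct invertibility and that the summand-of-$E\otimes_B E$ condition (as opposed to $E \otimes_B E$ itself) is preserved under $P \otimes_E - \otimes_E P^\vee$; this is exactly the point where the projectivity of $P$ over $\opcat B$ and of $P^\vee$ over $B$ is used, and getting the functoriality and exactness statements lined up is the delicate part.
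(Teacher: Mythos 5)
The paper cites Theorem~\ref{thm:grantperiodic} from Grant rather than proving it directly; the paper's own route to this result is Proposition~\ref{prop:grantperiodicmodule}, which shows Grant's relative-periodicity and projectivity hypotheses yield strong $\sigma$-periodicity of $M=\Hom_A(P,Q)$ and strong $\sigma^{-1}$-periodicity of $M^\vee$ relative to a single $\alpha\in\Ext^n_{E\otimes_k\opcat{E}}(E,{}_{\sigma\!}E)$, followed by Theorem~\ref{thm:periodictoperverse}. This is also the shape of Grant's original argument, which models the periodic twist on Ploog's treatment of spherical twists.

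Your proposal assembles the right raw ingredients but has a genuine gap at the heart of the construction. Neither the paper nor Grant takes $T = P\otimes_E Y_\bullet\otimes_E P^\vee$, nor its one-sided analogue $P\otimes_E Y_\bullet$, as the tilting object. The bimodule complex inducing the equivalence is the \emph{cone} $X=\cone\bigl(g:P\ltens_E Y\ltens_E P^\vee\to A\bigr)$, where $g$ corresponds under tensor-Hom adjunction to the map $f:Y\to E$ in the defining triangle; the copy of $A$ sitting in degree zero is indispensable. Without it, your $T$ lies entirely in $P\cadd$ on both sides, so $T\cadd$ cannot generate $K^b(A\proj)$, and appending the remaining $P_i[1]$ by hand does not repair this: for $n\ge 2$ the truncated complex $Y_\bullet$ has nonzero homology in degrees $0$ and $n-1$ (isomorphic to $E$ and ${}_{\sigma\!}E$ respectively), so $P\otimes_E Y_\bullet$ carries homology in two separated degrees and the self-orthogonality $\Hom_{D^b(A)}(T',T'[k])=0$ for $k\ne 0$ that a tilting complex requires does not hold for your candidate $T'$. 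Your assertion that self-extensions vanish \enquote{because $Y_\bullet$ is a truncated projective resolution} conflates a truncated relative resolution (which is not acyclic) with a full projective one. Once $X=\cone(g)$ is in hand, the paper sidesteps direct inversion altogether: it establishes that $X$ is perfect in $D^b(A)$ and in $D^b(\opcat{A})$ (Lemma~\ref{lem:Xisperfect} --- this is precisely where the projectivity of $P$ over $\opcat{B}$ and of $P^\vee$ over $B$ enters, via Proposition~\ref{prop:grantperiodicmodule} and Lemma~\ref{lem:strongperiodic}), computes $\Phi_P|_{P^\perp}\cong\Id$ and $\Phi_P(P)\cong P[n]$ (Propositions~\ref{prop:psionperp} and~\ref{prop:psionP}), invokes Bridgeland's criterion Theorem~\ref{thm:bridgeland} with the spanning class $\{P\}\cup P^\perp$ for full faithfulness, and closes essential surjectivity by showing $\Phi_P(\Perf(A))=\Perf(A)$. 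Perversity of width $n$ is then read off from the action on indecomposable projectives via Proposition~\ref{prop:perverseproj}, much as you anticipated.
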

	
	We call the equivalence $\Psi_P$ the \emph{(relative) periodic twist at $P$}.
	
	Grant's key examples in the non-relative case are symmetric algebra analogues of the spherical twists of Seidel and Thomas \cite{seidel-thomas_spherical} and the $\mathbb{P}^n$-twists of Huybrechts and Thomas \cite{huybrechts-thomas_Pn}, for which $E \cong k[x]/\langle x^{n+1} \rangle$. In the relative case, Grant gives the example of toric twists, for which $E \cong k[x,y] / \langle x^2,y^2 \rangle$.
	
	The next section will be devoted to tightening Theorem \ref{thm:grantperiodic} into an if-and-only-if statement, which we will achieve by working more intimately with the derived category.

	\section{Strongly Periodic Modules and Perverse Equivalences}\label{sec:main}
	
	We come now to our main result, Theorem \ref{thm:main}. This theorem gives necessary and sufficient conditions for the existence of a two-step self-perverse equivalence \begin{tikzcd}[sep=small]
		\Psi: D^b(A) \arrow[r,"\sim"] & D^b(A),
	\end{tikzcd} arising as the cone of a map $A \to X$ in $\bbimder{A}{A}$.
	
	We will fix some notation throughout this section. We will always denote by $A$ a finite-dimensional symmetric $k$-algebra, $\{S_1,\ldots,S_r\}$ a complete set of simple $A$-modules up to isomorphism, $\{P_1,\ldots,P_r\}$ the set of projective indecomposable $A$-modules such that $P_i/\rad(P_i) \cong S_i \cong \soc(P_i)$, and $I=\{1,\ldots,r\}$ an indexing set for these modules.
	
	We will be working with endomorphism algebras. If $P$ is a projective $A$-module and $E=\opcat{\End_A(P)}$, then $P$ has the structure of an $A$-$E$-bimodule. Given $J \subset I$ and integers $m_j \in \mathbb{Z}_+$ such that $P \cong \bigoplus_{j\in J}P_j^{m_j}$, we have that $E \cong \bigoplus_{j \in J}\Hom_A(P,P_j)^{m_j}$ as $E$-modules. In particular, each $\Hom_A(P,P_j)$ is a projective $E$-module, and the functor \begin{tikzcd}[cramped,sep=small]
		\Hom_A(P,-): A\modcat \arrow[r] & E\modcat
	\end{tikzcd} restricts to an equivalence of additive categories \begin{tikzcd}[cramped,sep=small]
		\Hom_A(P,-): P\cadd \arrow[r,"\sim"] & E\proj.
	\end{tikzcd}

	\subsection{Strong periodicity}\label{subsec:mainthm}
	
	Before stating our main theorem, it is necessary that we make the following definition.
	
	\begin{defn}\label{defn:strongperiod}
		Let $E$ be a finite-dimensional self-injective $k$-algebra, $M$ an $E$-module, $\sigma$ an automorphism of $E$, $n \in \mathbb{Z}_+$, and $\alpha \in \Ext^n_{E \otimes_k \opcat{E}}(E,{}_{\sigma\!}E)$. We say that $M$ is \emph{strongly $\sigma$-periodic of period $n$ relative to $\alpha$} if $\alpha \ltens_E M$ induces an isomorphism $\Omega_E^n(M) \cong {}_{\sigma\!}M$. Dually, we say that an $\opcat{E}$-module $N$ is \emph{strongly $\tau$-periodic of period $n$ relative to $\alpha$}, for $\tau$ an automorphism of $E$, if there exists some $\alpha \in \Ext^n_{E \otimes_k \opcat{E}}(E,E_\tau)$ such that $N \ltens_E \alpha$ induces an isomorphism $\Omega_{\opcat{E}}^n(N) \cong N_{\tau}$.
	\end{defn}
	
	It is worth taking the time to unpack this definition. We have that $\alpha \ltens_E M \in \Ext^n_E(M,{}_{\sigma\!}M)$, and 
	\[
	\Ext^n_E(M,{}_{\sigma\!}M) \cong \Hom_{D^b(E)}(M,{}_{\sigma\!}M[n]) \cong \Hom_{E\stmod}(M,\Omega^{-n}_E({}_{\sigma\!}M)).
	\]
	The extension $\alpha$ therefore induces an $E$-module homomorphism $M \to \Omega^{-n}_E({}_{\sigma\!}M)$. Since $E$ is self-injective, this in turn induces an $E$-module homomorphism $\Omega_E^n(M) \to {}_{\sigma\!}M$. The condition in Definition \ref{defn:strongperiod} is that this induced $E$-module homomorphism is an isomorphism. In particular, the $E$-module $M$ is $\sigma$-periodic of period $n$.
	
	The following alternative characterisation of strong $\sigma$-periodicity will be extremely useful.
	
	Recall that there is an equivalence of triangulated categories \begin{tikzcd}[cramped, sep=small]
		A\stmod \arrow[r,"\sim"] & D^b(A) / \Perf(A).
	\end{tikzcd} Given an object $X \in D^b(A)$, we denote by $\overbar{X}$ the image of $X$ under the quotient map \begin{tikzcd}[cramped, sep=small]
		D^b(A) \arrow[r] & D^b(A) / \Perf(A).
	\end{tikzcd} There is an $A$-module $W$ such that $W \cong \overbar{X}$ in $A\stmod$, under the above equivalence. We will freely identify these in what follows.
	
	Let $\alpha \in \Ext_{E \otimes_k \opcat{E}}^n(E,{}_{\sigma\!}E)$. Since $\Ext_{E \otimes_k \opcat{E}}^n(E,{}_{\sigma\!}E) \cong \Hom_{\bbimder{E}{E}}(E, {}_{\sigma\!}E[n])$, the element $\alpha$ gives rise to a triangle
	\begin{center}
		\begin{tikzcd}[sep=small]
			Y \arrow[r] & E \arrow[r, "\alpha"] & {}_{\sigma\!}E[n] \arrow[r,rightsquigarrow] & {}
		\end{tikzcd}
	\end{center}
	in $\bbimder{E}{E}$. By construction, the functor \begin{tikzcd}[cramped, sep=small]
		Y \ltens_E - : D^b(E) \arrow[r] & D^b(E)
	\end{tikzcd} is triangulated, and induces the functor \begin{tikzcd}[cramped, sep=small]
		\overbar{Y} \otimes_E - : E\stmod \arrow[r] & E\stmod
	\end{tikzcd} so that, for any $E$-module $U$, we have $\overbar{Y} \otimes_E U \cong \overbar{Y \ltens_E U}$ in $E\stmod$.
	
	Dually, one may show that, for any $\opcat{E}$-module $V$, we have $V \otimes_E \overbar{Y} \cong \overbar{V \ltens_E Y}$ in $\opcat{E}\stmod$. This gives us the following characterisation of strongly periodic modules over $E$ and $\opcat{E}$, at the level of the derived category.
	
	\begin{lem}\label{lem:strongperiodic}
		The $E$-module $M$ is strongly $\sigma$-periodic relative to $\alpha$ if and only if $Y \ltens_E M$ is a perfect object in $D^b(E)$. Dually, the $\opcat{E}$-module $N$ is strongly $\sigma^{-1}$-periodic relative to $\alpha$ if and only if $N \ltens_E Y$ is a perfect object in $D^b(\opcat{E})$.
	\end{lem}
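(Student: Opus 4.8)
The plan is to apply the derived tensor functors $-\ltens_E M$ and $N\ltens_E-$ to the triangle $Y\to E\xrightarrow{\alpha}{}_{\sigma\!}E[n]\rightsquigarrow$ defining $Y$, and then to detect perfectness of the outputs $Y\ltens_E M$ and $N\ltens_E Y$ by passing to the stable category, where the connecting morphisms become precisely the maps appearing in Definition \ref{defn:strongperiod}.

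First I would treat the $E$-module statement. Applying the triangulated functor $-\ltens_E M$ to $Y\to E\xrightarrow{\alpha}{}_{\sigma\!}E[n]\rightsquigarrow$, and using that $E$ and ${}_{\sigma\!}E$ are free as right $E$-modules so that $E\ltens_E M\cong M$ and $({}_{\sigma\!}E)\ltens_E M\cong{}_{\sigma\!}M$, yields a triangle $Y\ltens_E M\to M\xrightarrow{\alpha\ltens_E M}{}_{\sigma\!}M[n]\rightsquigarrow$ in $D^b(E)$. Rotating, $\cone(\alpha\ltens_E M)\cong(Y\ltens_E M)[1]$, so $Y\ltens_E M$ is perfect exactly when $\alpha\ltens_E M$ becomes invertible in the Verdier quotient $D^b(E)/\Perf(E)\cong E\stmod$. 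Under this equivalence the suspension is $\Omega_E^{-1}$, so the quotient functor sends our triangle to $\overbar{Y\ltens_E M}\to M\to\Omega_E^{-n}({}_{\sigma\!}M)\rightsquigarrow$ in $E\stmod$, whose middle arrow is the image of $\alpha\ltens_E M$ under $\Ext_E^n(M,{}_{\sigma\!}M)\cong\Hom_{E\stmod}(M,\Omega_E^{-n}({}_{\sigma\!}M))$; applying the autoequivalence $\Omega_E^n$ identifies it with the map $\Omega_E^n(M)\to{}_{\sigma\!}M$ constructed in the discussion following Definition \ref{defn:strongperiod}. Hence $Y\ltens_E M$ is perfect if and only if $\overbar{Y\ltens_E M}=0$, if and only if this map is an isomorphism in $E\stmod$, which (working with minimal resolutions, so that $\Omega_E^n(M)$ has no projective summands) is the assertion that $M$ is strongly $\sigma$-periodic relative to $\alpha$.

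For the $\opcat{E}$-module statement I would run the mirror argument with $N\ltens_E-$. One first notes that $e\mapsto\sigma^{-1}(e)$ is an isomorphism of $E$-$E$-bimodules ${}_{\sigma\!}E\xrightarrow{\sim}E_{\sigma^{-1}}$, whence $N\ltens_E{}_{\sigma\!}E\cong N_{\sigma^{-1}}$; applying $N\ltens_E-$ to the defining triangle then gives $N\ltens_E Y\to N\xrightarrow{N\ltens_E\alpha}N_{\sigma^{-1}}[n]\rightsquigarrow$ in $D^b(\opcat{E})$, and the same analysis in $\opcat{E}\stmod$ — now invoking $V\otimes_E\overbar{Y}\cong\overbar{V\ltens_E Y}$ — shows that $N\ltens_E Y$ is perfect if and only if the induced map $\Omega_{\opcat{E}}^n(N)\to N_{\sigma^{-1}}$ is an isomorphism, i.e. if and only if $N$ is strongly $\sigma^{-1}$-periodic relative to $\alpha$.

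Apart from the triangle rotations and the equivalence $D^b(E)/\Perf(E)\cong E\stmod$, the only genuinely non-formal point is the bookkeeping that identifies the connecting morphism of each tensored triangle with the extension-class map of Definition \ref{defn:strongperiod}, together with the observation that an isomorphism in the stable category between modules without projective direct summands is an honest isomorphism of modules; I expect this matching to be where any real care is needed.
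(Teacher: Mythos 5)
Your proof is correct and follows essentially the same route as the paper: tensor the defining triangle by $M$ (resp.\ $N$), pass to the stable category via the Verdier quotient $D^b(E)/\Perf(E)\cong E\stmod$, and observe that perfectness of $Y\ltens_E M$ is equivalent to the connecting morphism becoming an isomorphism there, which is exactly the condition in Definition~\ref{defn:strongperiod}. Your extra care about identifying the connecting morphism with the extension-class map, and about stable versus honest isomorphisms via minimal resolutions, is more explicit than the paper's treatment but does not change the substance of the argument.
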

	
	\begin{proof}
		The triangle
		\begin{center}
			\begin{tikzcd}
				Y \ltens_E M \arrow[r,"f \ltens_E M"] & M \arrow[r,"\alpha \ltens_E M"] & {}_{\sigma\!}M[n] \arrow[r,rightsquigarrow] & {}
			\end{tikzcd}
		\end{center}
		in $D^b(E)$ induces a triangle
		\begin{center}
			\begin{tikzcd}[sep=small]
				\overbar{Y} \otimes_E M \arrow[r] & M \arrow[r] & {}\Omega_E^{-n}({}_{\sigma\!}M) \arrow[r,rightsquigarrow] & {}
			\end{tikzcd}
		\end{center}
		in $E\stmod$. If $M$ is strongly $\sigma$-periodic relative to $\alpha$, then this second arrow is an isomorphism, and thus $\overbar{Y} \otimes_E M \cong \overbar{Y \ltens_E M} \cong 0$ in $E\stmod$, so $Y \ltens_E M$ is perfect in $D^b(E)$. But on the other hand, if $Y \ltens_E M$ is perfect, then $\overbar{Y \ltens_E M} \cong \overbar{Y} \otimes_E M \cong 0$, giving an isomorphism \begin{tikzcd}[cramped,sep=small]M \arrow[r,"\sim"] & \Omega_E^{-n}({}_{\sigma\!}M)\end{tikzcd}, so that $M$ is strongly $\sigma$ periodic relative to $\alpha$. The proof of the dual statement is similar.
	\end{proof}
	
	We can use this to prove the following proposition, which tells us that strongly periodic modules occur in the Grant setting.
	
	\begin{prop}\label{prop:grantperiodicmodule}
		Suppose there is some automorphism $\sigma$ of $E$ and a subalgebra $B$ of $E$ such that $E$ is $\sigma$-periodic of period $n$ relative to $B$. Suppose that $P$ is projective as a $\opcat{B}$-module and that $P^\vee$ is projective as a $B$-module. Then there is an $\alpha \in \Ext^n_{E \otimes_k \opcat{E}}(E,{}_{\sigma\!}E)$ such that the $E$-module $M$ is strongly $\sigma$-periodic of period $n$ and the $\opcat{E}$-module $M^\vee$ is strongly $\sigma^{-1}$-periodic of period $n$, both relative to $\alpha$.
	\end{prop}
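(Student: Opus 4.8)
The plan is to read the class $\alpha$ straight off the relative periodicity resolution and then to check strong periodicity by means of the derived-category criterion in Lemma~\ref{lem:strongperiodic}. To begin, fix an exact sequence of $E$-$E$-bimodules $\xi\colon 0 \to {}_{\sigma\!}E \to Y_{n-1} \to \cdots \to Y_0 \to E \to 0$ with each $Y_i$ a direct summand of $E \otimes_B E$, witnessing that $E$ is $\sigma$-periodic of period $n$ relative to $B$, and let $\alpha = [\xi] \in \Ext^n_{E \otimes_k \opcat{E}}(E,{}_{\sigma\!}E)$ be its Yoneda class. Under the identification $\Ext^n_{E \otimes_k \opcat{E}}(E,{}_{\sigma\!}E) \cong \Hom_{\bbimder{E}{E}}(E,{}_{\sigma\!}E[n])$, the complex $[Y_{n-1} \to \cdots \to Y_0]$, placed in homological degrees $n-1,\dots,0$, represents the object $Y$ in the triangle $Y \to E \xrightarrow{\alpha} {}_{\sigma\!}E[n] \to Y[1]$ considered just before Lemma~\ref{lem:strongperiodic}.

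The second step transports the two projectivity hypotheses on $P$ onto $M$ and $M^\vee$. Writing $A \cong P \oplus Q$, the equivalence $\Hom_A(P,-)\colon P\cadd \xrightarrow{\sim} E\proj$ sends $P$ to the regular module, so $\Hom_A(P,P) \cong E$ and hence $P^\vee = \Hom_A(P,A) \cong E \oplus M$ as left $E$-modules; dually $P \cong \Hom_A(A,P) \cong E \oplus M^\vee$ as right $E$-modules, where one uses that $E$ is symmetric to identify $M^\vee$ with $M^\ast$ and Theorem~\ref{thm:symmetric}(iii) to identify $M^\ast$ with $\Hom_A(Q,P)$. Restricting these decompositions along $B \hookrightarrow E$ shows that the hypothesis that $P^\vee$ is projective as a $B$-module forces $M$ to be projective as a left $B$-module, and the hypothesis that $P$ is projective as a $\opcat{B}$-module forces $M^\vee$ to be projective as a right $B$-module.

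By Lemma~\ref{lem:strongperiodic} it then remains to prove that $Y \ltens_E M$ is perfect in $D^b(E)$ and $M^\vee \ltens_E Y$ is perfect in $D^b(\opcat{E})$. As $Y \simeq [Y_{n-1} \to \cdots \to Y_0]$ is a bounded complex, $Y \ltens_E M$ lies in the thick subcategory of $D^b(E)$ generated by the objects $Y_i \ltens_E M$, so it suffices to treat $(E \otimes_B E) \ltens_E M$. Because $M$ is projective as a $B$-module (and, as holds in this relative setting, $E$ is projective as a $B$-module), the derived tensor product collapses: $(E \otimes_B E) \ltens_E M \simeq (E \otimes_B E) \otimes_E M \cong E \otimes_B M$, and $E \otimes_B M$ is a projective $E$-module since $M$ is a summand of a free $B$-module; in particular it is perfect. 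Unwound, this says that applying $-\otimes_E M$ to $\xi$ produces a truncated projective resolution $0 \to {}_{\sigma\!}M \to Y_{n-1}\otimes_E M \to \cdots \to Y_0 \otimes_E M \to M \to 0$ realising the isomorphism $\Omega_E^{n}(M) \cong {}_{\sigma\!}M$ induced by $\alpha \ltens_E M$, so $M$ is strongly $\sigma$-periodic of period $n$ relative to $\alpha$. The same argument with $M^\vee$ in place of $M$, now using that $M^\vee$ is projective as a right $B$-module, makes $M^\vee \ltens_E Y$ perfect; and since ${}_{\sigma\!}E \cong E_{\sigma^{-1}}$ as $E$-$E$-bimodules, $\alpha$ also represents a class in $\Ext^n_{E \otimes_k \opcat{E}}(E, E_{\sigma^{-1}})$, so the dual half of Lemma~\ref{lem:strongperiodic} gives that $M^\vee$ is strongly $\sigma^{-1}$-periodic of period $n$ relative to $\alpha$, as desired.

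The step I expect to require the most care is the collapse $(E \otimes_B E) \ltens_E M \simeq E \otimes_B M$ together with the projectivity of the result: this is exactly the point at which $B$-projectivity of $M$ (and of $M^\vee$), rather than mere projectivity over $E$, is indispensable, and it is what turns the $B$-relative resolution $\xi$ of the bimodule $E$ into a genuine projective resolution of the module $M$ after base change. The remaining ingredients --- the passage from a Yoneda $n$-extension to a morphism in $\bbimder{E}{E}$, the reduction to a single generator by thick subcategories, and the bookkeeping with $k$-duals --- are routine.
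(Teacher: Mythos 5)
Your proposal is correct and follows essentially the same route as the paper: extract $\alpha$ and the triangle $Y \to E \to {}_{\sigma\!}E[n]$ from the relative resolution, use the decompositions $P^\vee \cong E \oplus M$ and $P \cong E \oplus M^\vee$ to deduce $B$-projectivity of $M$ and $M^\vee$ from the hypotheses on $P$, and then apply Lemma~\ref{lem:strongperiodic} by reducing perfection of $Y \ltens_E M$ to that of $E \otimes_B M$ (and dually for $M^\vee$). You are in fact slightly more explicit than the paper about why the derived tensor collapses, and you silently correct a small left/right slip in the paper's treatment of $M^\vee$; the one phrase worth tightening is ``as holds in this relative setting'' --- the projectivity of $E$ over $B$ is not automatic from relative periodicity, but rather falls out of the same decomposition $P^\vee \cong E \oplus M$ (resp.\ $P \cong E \oplus M^\vee$) that gives the projectivity of $M$ (resp.\ $M^\vee$).
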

	
	\begin{proof}
		We have an exact sequence
		\begin{center}
			\begin{tikzcd}[sep=small]
				0 \arrow[r] & _{\sigma\!}E[n-1] \arrow[r] & Y \arrow[r] & E \arrow[r] & 0
			\end{tikzcd}
		\end{center}
		in $\bbimcomp{E}{E}$, where $Y$ is a truncated resolution of $E$ relative to $B$. We thus have a triangle 
		\begin{center}
			\begin{tikzcd}[sep=small]
				Y \arrow[r] & E \arrow[r, "\alpha"] & {}_{\sigma\!}E[n] \arrow[r,rightsquigarrow] & {}
			\end{tikzcd}
		\end{center}
		in $\bbimder{E}{E}$. By Lemma \ref{lem:strongperiodic}, it suffices to show that $Y \ltens_E M$ is perfect in $D^b(E)$. By construction, this is the case if and only if $(E \otimes_B E) \ltens_E M \cong E \ltens_B M$ is perfect in $D^b(E)$.
		
		We have
		\begin{align*}
			P^\vee &= \Hom_A(P,A) \\
			&\cong \Hom_A(P, P \oplus Q) \\
			&\cong \Hom_A(P,P) \oplus \Hom_A(P,Q) \\
			&= E \oplus M,
		\end{align*}
		and since $P^\vee$ is a projective $B$-module, both $E$ and $M$ are projective $B$-modules. Since $M$ is a projective $B$-module, $E \ltens_B M$ is perfect in $D^b(E)$, so that $Y \ltens_E M$ is, too, and this proves the claim for $M$.
		
		For the $\opcat{E}$-module $M^\vee$, we first recall that ${}_{\sigma\!}E \cong E_{\sigma^{-1}}$ as $E$-$E$-bimodules, and that by Theorem \ref{thm:symmetric}, since $E$ is a symmetric algebra, we have
		\[
		M^\vee = \Hom_A(P,Q)^\vee \cong \Hom_A(P,Q)^\ast \cong \Hom_A(Q,P).
		\]
		
		We similarly need to show that $M^\vee \ltens_E Y$ is perfect in $D^b(\opcat{E})$, for which it again suffices to show that $M^\vee \ltens_E (E \otimes_B E) \cong M^\vee \ltens_B E$ is. Similarly to before, we have $P \cong E \oplus M^\vee$, and since $P^\vee$ is a projective $B$-module, both $E$ and $M$ are projective $B$-modules. We can therefore deduce that $M^\vee \ltens_B E$ is perfect in $D^b(\opcat{E})$, so that $M^\vee \ltens_E Y$ is, too, and we are done.
	\end{proof}
	
	One may wonder if other examples of strongly periodic modules exist. That is, does there exist a finite-dimensional self-injective $k$-algebra $E$ and an $E$-module $M$ such that $M$ is strongly $\sigma$-periodic, but $E$ is not $\sigma$-periodic, relative or otherwise? In Section \ref{subsec:exotic}, we will see an exotic example of a strongly periodic module $M$ over a symmetric algebra $E$, which is not known to be (relatively) $\sigma$-periodic. However, it remains open to find a strongly $\sigma$-periodic module over an algebra known to not be (relatively) $\sigma$-periodic.

	\subsection{The main theorem}
	
	Recall that a Serre subcategory of $A\modcat$ is generated by a set of simple $A$-modules.
	
	\begin{defn}
		Let $P$ be a projective $A$-module. There is some subset $J \subset I$ such that $P=\bigoplus_{i \in I \setminus J}P_i^{m_i}$ for some integers $m_i \ge 1$. We call the Serre subcategory $\mathcal{A}_1$ of $A\modcat$ generated by the set $\{S_j\}_{j \in J}$ the \emph{Serre subcategory prime to $P$}.
	\end{defn}
	
	Clearly, the Serre subcategory $\mathcal{A}_1$ of $A\modcat$ prime to $P$ depends only on the isomorphism classes of summands of $P$, and not their multiplicities.
	
	The remainder of this section is dedicated to proving the following theorem.
	
	\begin{thm}\label{thm:main}
		Let $A$ be a finite-dimensional, symmetric $k$-algebra. Let $P$ and $Q$ be projective $A$-modules with no common direct summands up to isomorphism, and such that $P \oplus Q$ is a projective generator of $A$. Set $E = \opcat{\End_A(P)}$ and $M=\Hom_A(P,Q)$. Let $\mathcal{A}_1$ be the Serre subcategory of $A\modcat$ prime to $P$. Then there exists a standard derived equivalence \begin{tikzcd}[cramped, sep=small]
			\Phi: D^b(A) \arrow[r,"\sim"] & D^b(A)
		\end{tikzcd} self-perverse relative to
		\[
		0 \subset_0 \mathcal{A}_1 \subset_n A\modcat,
		\]
		together with a natural transformation \begin{tikzcd}[cramped, sep=small]\Id_{D^b(A)} \arrow[r] & \Phi\end{tikzcd} restricting to a natural isomorphism \begin{tikzcd}[cramped, sep=small]\Id_{\mathcal{A}_1} \arrow[r,"\sim"] & \Phi|_{\mathcal{A}_1}\end{tikzcd} if and only if there exists an automorphism $\sigma$ of $E$ and an extension $\alpha \in \Ext_{E\otimes_k\opcat{E}}^n(E,{}_{\sigma\!}E)$ such that the $E$-module $M$ is strongly $\sigma$-periodic and the $\opcat{E}$-module $M^\vee$ is strongly $\sigma^{-1}$-periodic, both of period $n$ and relative to $\alpha$.
	\end{thm}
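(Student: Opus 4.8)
The biconditional is the conjunction of two implications; I would establish $(\Leftarrow)$ through a \emph{generalised periodic twist} construction and $(\Rightarrow)$ by dissecting the bimodule cone of the given natural transformation, expecting the former to carry the real weight. For $(\Leftarrow)$, assume $\sigma$ and $\alpha$ are given. As in the discussion preceding Lemma~\ref{lem:strongperiodic}, $\alpha$ determines a triangle $Y \to E \xrightarrow{\alpha} {}_{\sigma}E[n] \to Y[1]$ in $\bbimder{E}{E}$. Writing $P = \bigoplus_{i \in I \setminus J}P_i^{m_i}$ and choosing an idempotent $e$ with $Ae$ the sum of the distinct summands of $P$, the $E$-$A$-bimodule $P^\vee = \Hom_A(P,A)$ satisfies $P^\vee \ltens_A P \cong E$, and $\cone(P \ltens_E P^\vee \to A) \cong A/AeA$, all of whose composition factors lie in $\mathcal{A}_1$. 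I would then set $X \defeq \cone\bigl(P \ltens_E Y \ltens_E P^\vee \to A\bigr) \in \bbimder{A}{A}$ — the map being $Y \to E$ followed by evaluation — and $\Phi_P \defeq X \ltens_A -$, the defining triangle furnishing a natural transformation $\Id_{D^b(A)} \to \Phi_P$ whose cone is $(P \ltens_E Y \ltens_E P^\vee)[1] \ltens_A -$. Since $P^\vee \ltens_A S_j = \Hom_A(P,S_j) = 0$ for $j \in J$, this cone annihilates the generators of $\mathcal{A}_1$, hence all of $\mathcal{A}_1$, so the natural transformation restricts to an isomorphism on $\mathcal{A}_1$; this half is formal.

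For the perversity I would verify the projective criterion, Proposition~\ref{prop:perverseproj}, for $\emptyset = \mathcal{P}_0 \subset \mathcal{P}_1 \subset \mathcal{P}_2$ with $\mathcal{P}_1$ the indecomposable summands of $P$. For $i \in I \setminus J$, tensoring the triangle $Y \ltens_E E_i \to E_i \to {}_{\sigma}E \ltens_E E_i[n]$ (where $E_i = \Hom_A(P,P_i)$) along $P$ and using $\Hom_{D^b(A)}(P_i,P_{i'}[n]) = 0$ yields $\Phi_P(P_i) \cong P_{\sigma(i)}[n]$, concentrated in degree $n$ with the single term $P_{\sigma(i)}$ lying in $\mathcal{P}_1$. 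For $j \in J$, the summand $M_j = \Hom_A(P,P_j)$ of $M$ is strongly $\sigma$-periodic relative to $\alpha$, so by Lemma~\ref{lem:strongperiodic} $Y \ltens_E M_j$ is perfect over $E$; hence $P \ltens_E Y \ltens_E M_j$ is a bounded complex of modules in $\mathrm{add}(P)$ and $\Phi_P(P_j) = \cone(P \ltens_E Y \ltens_E M_j \to P_j)$ is a complex of projectives with all terms in $\mathrm{add}(P)$ except in degree $0$, which is exactly $P_j$. Both conditions of Proposition~\ref{prop:perverseproj} then hold, with bijection $\sigma$ on $\mathcal{P}_1$ and the identity on $\mathcal{P}_2 \setminus \mathcal{P}_1$. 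What remains — and what I expect to be the main obstacle — is that $X$ is a two-sided tilting complex, i.e.\ that $\Phi_P$ is an equivalence; only then does the above describe a perverse \emph{equivalence}. Here the strong $\sigma^{-1}$-periodicity of $M^\vee$ is essential: the mirror construction (roles of left and right interchanged) produces a candidate inverse $X^\vee = \rhom_A(X,A)$, and the isomorphisms $X \ltens_A X^\vee \cong A$ and $X^\vee \ltens_A X \cong A$ reduce, after applying $\Hom_A(P,-)$ and $\Hom_A(-,P)$, to the two perfectness statements of Lemma~\ref{lem:strongperiodic}; this is a direct, if involved, generalisation of the computation behind Grant's Theorem~\ref{thm:grantperiodic}.

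For $(\Rightarrow)$, let $\Phi = X \ltens_A -$ with natural transformation $\eta\colon \Id \to \Phi$ restricting to an isomorphism on $\mathcal{A}_1$, let $\eta_A\colon A \to X$ be the underlying bimodule map, and set $Z = \cone(\eta_A) \in \bbimder{A}{A}$, which is perfect over $A$ since $A$ and $X$ are. The isomorphism on $\mathcal{A}_1$ gives $Z \ltens_A V = 0$ for $V \in \mathcal{A}_1$; as $\cone(P \ltens_E P^\vee \to A) \cong A/AeA \in \mathcal{A}_1$, this forces $Z \cong Z \ltens_A P \ltens_E P^\vee$. By Proposition~\ref{prop:perverseproj}, $\Phi(P_i) \cong P_{\pi(i)}[n]$ for $i \in I \setminus J$ and a permutation $\pi$, so $\rhom_A(P, X \ltens_A P) \cong \Hom_A(P,\Phi(P))$ is an invertible $E$-$E$-bimodule concentrated in degree $n$, which I would take to be ${}_{\sigma}E[n]$ for an automorphism $\sigma$ (when $P$ is not basic one should verify this invertible bimodule has that shape, or build it into the set-up). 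Applying $\rhom_A(P,-\ltens_A P)$ to the triangle $Z[-1] \to A \to X$ yields $W[-1] \to E \xrightarrow{\alpha} {}_{\sigma}E[n] \to$ with $W = P^\vee \ltens_A Z \ltens_A P$ and $\alpha = \rhom_A(P,\eta_A \ltens_A P) \in \Ext^n_{E \otimes_k \opcat{E}}(E,{}_{\sigma}E)$; these are the desired $\sigma$ and $\alpha$, and $Y \defeq W[-1]$ is precisely the object attached to $\alpha$ before Lemma~\ref{lem:strongperiodic}.

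It remains to extract the periodicity. Fix $j \in J$. By Proposition~\ref{prop:perverseproj}, $\Phi(P_j) \simeq X_j$, a complex of projectives with terms in $\mathrm{add}(P)$ except in degree $0$, which is $P_j \oplus R_j$ with $R_j \in \mathrm{add}(P)$. Naturality of $\eta$ against the projective cover $\pi_j\colon P_j \twoheadrightarrow S_j$, together with $\eta_{S_j}$ being an isomorphism (hence, after identifying $\Phi(S_j)$ with $S_j$, the identity) and $\Hom_A(R_j,S_j) = 0$, forces the $P_j$-to-$P_j$ component of $\eta_{P_j}$ in degree $0$ to be an automorphism of $P_j$; Gaussian elimination then presents $Z \ltens_A P_j = \cone(\eta_{P_j})$ as a bounded complex of modules in $\mathrm{add}(P)$. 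Consequently $\rhom_A(P, Z \ltens_A P_j)$ is perfect over $E$; using $Z \cong Z \ltens_A P \ltens_E P^\vee$ to rewrite $Z \ltens_A P_j \cong (Z \ltens_A P) \ltens_E M_j$ with $M_j = \Hom_A(P,P_j)$, this perfect object equals $W \ltens_E M_j = (Y \ltens_E M_j)[1]$, so $Y \ltens_E M_j$ is perfect over $E$ and Lemma~\ref{lem:strongperiodic} makes $M_j$ — and hence $M$, a direct sum of copies of the $M_j$ for $j \in J$ — strongly $\sigma$-periodic of period $n$ relative to $\alpha$. Running the identical argument for the right-module perverse equivalence $-\ltens_A X$ of Proposition~\ref{prop:rightmodperverse}, which inherits a natural transformation from $\eta_A$ and restricts to the identity on the corresponding Serre subcategory by the symmetry of $A$, yields the strong $\sigma^{-1}$-periodicity of $M^\vee$ relative to the same $\alpha$, completing the proof.
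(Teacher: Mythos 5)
Your overall architecture — build $X$ as the cone of $g\colon P \ltens_E Y \ltens_E P^\vee \to A$, set $\Phi_P = X\ltens_A-$, verify perversity on projectives, and in the converse direction extract $Y = P^\vee \ltens_A Z \ltens_A P$ from $Z = \cone(\eta_A)$ — is the architecture of the paper's proof, and the $(\Rightarrow)$ half closely tracks Theorem~\ref{thm:perversetoperiodic}. Your replacement for Lemma~\ref{lem:ZinthickP} (tensoring the triangle $P\ltens_E P^\vee \to A \to \cone$ by $Z$ and using vanishing on $\mathcal{A}_1$) is a legitimate variant, though note that $\cone(P\ltens_E P^\vee \to A)$ need not be isomorphic to $A/AeA$ concentrated in degree $0$ — the evaluation map can have a kernel — so what you actually need, and what holds, is that this cone lies in $D^b_{\mathcal{A}_1}(A)$; the conclusion $Z \cong Z\ltens_A P\ltens_E P^\vee$ survives. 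Your Gaussian-elimination route to the perfectness of $Y\ltens_E M_j$ is also a workable (if heavier-handed) substitute for the paper's direct argument that $P^\vee\ltens_A Z\ltens_A Q$ is perfect.

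The genuine gap is in the $(\Leftarrow)$ direction, and it is precisely where you flag the ``main obstacle'': proving that $\Phi_P$ is an \emph{equivalence}. You propose to show directly that $X\ltens_A X^\vee \cong A$ and $X^\vee\ltens_A X \cong A$, claiming these isomorphisms ``reduce, after applying $\Hom_A(P,-)$ and $\Hom_A(-,P)$, to the two perfectness statements of Lemma~\ref{lem:strongperiodic}.'' This reduction is not established and is not a formal consequence: applying $\Hom_A(P,-)$ to a complex of $A$-$A$-bimodules does not let you \emph{conclude} an isomorphism of $A$-$A$-bimodule complexes from data over $E$, because $\Hom_A(P,-)$ is far from conservative on $D^b(A\text{-}A)$ — it kills exactly the part of the picture (the $\mathcal{A}_1$-direction) that $\Phi_P$ must act trivially on. The paper avoids this direct computation entirely by invoking Bridgeland's fully-faithfulness criterion (Theorem~\ref{thm:bridgeland}) against the spanning class $\{P\}\cup P^\perp$ (Lemma~\ref{lem:spanningclass}): Proposition~\ref{prop:psionperp} shows $\Phi_P$ is the identity on $P^\perp$, Proposition~\ref{prop:psionP} shows $\Phi_P(P)\cong P[n]$, Lemma~\ref{lem:Xisperfect} supplies the adjoints, and Corollary~\ref{cor:Phifullyfaithful} then gives fully faithfulness; essential surjectivity (Proposition~\ref{prop:Phiequiv}) follows by showing $\Phi_P(\Perf A)$ contains both $\langle P\rangle$ and $Q$. (Grant's own proof of Theorem~\ref{thm:grantperiodic}, to which you appeal, proceeds by the same spanning-class method, not by a direct two-sided-tilting verification.) As written, your $(\Leftarrow)$ argument establishes fully-faithful-looking behaviour on projectives but never closes the loop, so this half is incomplete.

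Two smaller points. In verifying Proposition~\ref{prop:perverseproj} you assert $\Phi_P(P_i)\cong P_{\sigma(i)}[n]$ by ``tensoring along $P$''; the identification of the triangle $P\ltens_E\nabla$ with $\Delta\ltens_A P$ requires the unit/counit compatibility carried out explicitly in the proof of Proposition~\ref{prop:psionP}, and the resulting permutation is $\sigma^{-1}$, not $\sigma$ (as in Proposition~\ref{prop:Phiperverse}). And in the $(\Rightarrow)$ direction, your observation that one must check the invertible bimodule $P^\vee\ltens_A X\ltens_A P$ is genuinely of the form ${}_{\sigma}E[n]$ for an \emph{automorphism} $\sigma$ is exactly the role of the reduction to the basic case (Proposition~\ref{prop:Aisbasic1}) together with Proposition~\ref{prop:rightmodperverse}; you note this but do not carry it out.
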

	
	The statement that $P \oplus Q$ is a projective generator of $A$ means that there is some $m \in \mathbb{Z}_+$ such that the regular $A$-module $A$ is a direct summand of $(P \oplus Q)^{\oplus m}$. Further, we are assuming that $P$ and $Q$ have no common direct summands. Then there is some $J \subset I$ and integers $m_i, m_j \ge 1$ such that $P= \bigoplus_{i \in I \setminus J} P_i^{m_i}$ and $Q= \bigoplus_{j \in J} P_j^{m_j}$.
	
	We will prove Theorem \ref{thm:main} in two parts. Firstly, Theorem \ref{thm:perversetoperiodic} tells us that the existence of such a derived autoequivalence $\Phi$ guarantees that $M$ and $M^\vee$ are twisted strongly periodic. Theorem \ref{thm:periodictoperverse} demonstrates the converse: that twisted strong periodicity of the modules $M$ and $M^\vee$ guarantee the existence of a derived autoequivalence $\Phi$ with the requisite properties.

	\subsection{Necessary conditions}\label{subsec:necconds}

	Our first main result is the following.
	
	\begin{thm}\label{thm:perversetoperiodic}
		Suppose that \begin{tikzcd}[cramped, sep=small]
			\Phi: D^b(A) \arrow[r,"\sim"] & D^b(A)
		\end{tikzcd} is a standard derived autoequivalence, perverse relative to a filtration
		\[
		0 \subset_0 \mathcal{A}_1 \subset_n A\modcat
		\]
		of Serre subcategories. Suppose also that there is a natural transformation of functors $\Id_{D^b(A)} \to \Phi$ restricting to a natural isomorphism \begin{tikzcd}[cramped, sep=small]\Id_{\mathcal{A}_1} \arrow[r,"\sim"] & \Phi|_{\mathcal{A}_1}\end{tikzcd}. Then there are projective $A$-modules $P$ and $Q$, with no common direct summands, such that $\mathcal{A}_1$ is the Serre subcategory prime to $P$, $P \oplus Q$ is a projective generator of $A$ and, with $E=\opcat{\End_A(P)}$, there is an automorphism $\sigma$ of $E$ such that $E$-module $M=\Hom_A(P,Q)$ is strongly $\sigma$-periodic of period $n$, and the $\opcat{E}$-module $M^\vee$ is strongly $\sigma^{-1}$-periodic of period $n$, relative to some $\alpha \in \Ext^n_{E \otimes_k \opcat{E}}(E,{}_{\sigma\!}E)$.
	\end{thm}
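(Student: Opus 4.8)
The plan is to extract the projective modules $P$ and $Q$ directly from the perversity data, then use the standard bimodule complex $X$ representing $\Phi$ to build the extension $\alpha$, and finally verify strong periodicity via the perfectness criterion of Lemma \ref{lem:strongperiodic}. First, since $\Phi$ is two-step perverse relative to $0 \subset_0 \mathcal{A}_1 \subset_n A\modcat$ with $\mathcal{A}_1 = \mathcal{A}_J$ for some $J \subset I$, define $Q = \bigoplus_{j \in J} P_j$ and $P = \bigoplus_{i \in I \setminus J} P_i$. Then $P \oplus Q$ is a projective generator, $P$ and $Q$ share no summands, and $\mathcal{A}_1$ is the Serre subcategory prime to $P$ by construction. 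Set $E = \opcat{\End_A(P)}$ and $M = \Hom_A(P,Q)$ as in the statement.

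Next I would analyse the natural transformation $\eta \colon \Id_{D^b(A)} \to \Phi$. Representing $\Phi$ by a standard equivalence $X \ltens_A -$ with $X \in \bbimder{A}{A}$ (Theorem \ref{thm:standardderived}), the transformation $\eta$ corresponds to a morphism $A \to X$ in $\bbimder{A}{A}$; completing to a triangle gives $Z \to A \to X \rightsquigarrow$ in $\bbimder{A}{A}$. The hypothesis that $\eta$ restricts to a natural \emph{isomorphism} on $\mathcal{A}_1$ means that for every $A$-module $V$ with composition factors in $\mathcal{S}_J$, $Z \ltens_A V \cong 0$; in particular $Z \ltens_A S_j \cong 0$ for $j \in J$, and iterating over the filtration of $Q$ by such simples, $Z \ltens_A Q \cong 0$, i.e. $Z \ltens_A P \cong Z$ after restricting to the summand. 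The perversity condition (via Proposition \ref{prop:perverseproj}, width $n$) forces $\Phi(P_i)$ to be concentrated (modulo $\mathcal{P}'_{i-1}$-terms) in degree $n$, so $Z$ is concentrated in degree $n$ up to perfect complexes, and one reads off that $\overbar{Z} \otimes_A P$ realises the shift $[n-1]$ on the relevant stable summand. Applying $\Hom_A(P,-) = P^\vee \ltens_A -$ translates $Z$ into a complex $Y$ of $E$-$E$-bimodules fitting a triangle $Y \to E \to {}_{\sigma\!}E[n] \rightsquigarrow$, where the twist $\sigma$ is the automorphism of $E$ recording how the perverse bijection $\mathcal{P} \leftrightarrow \mathcal{P}$ permutes the summands of $P$; this triangle is precisely the data of an $\alpha \in \Ext^n_{E \otimes_k \opcat{E}}(E, {}_{\sigma\!}E)$.

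Finally, to conclude strong periodicity I would invoke Lemma \ref{lem:strongperiodic}: $M$ is strongly $\sigma$-periodic relative to $\alpha$ iff $Y \ltens_E M$ is perfect in $D^b(E)$, and dually for $M^\vee$. Since $M = \Hom_A(P,Q) \cong P^\vee \ltens_A Q$ and $Y$ arises from $Z$ by the equivalence $\Hom_A(P,-)\colon P\cadd \xrightarrow{\sim} E\proj$, one gets $Y \ltens_E M \simeq \Hom_A(P, Z \ltens_A Q)$ up to perfect summands; but $Z \ltens_A Q \cong 0$ by the argument above, so $Y \ltens_E M$ is perfect. The dual computation for $M^\vee$ uses that $E$ is symmetric (so $M^\vee \cong \Hom_A(Q,P)$, Theorem \ref{thm:symmetric}), that ${}_{\sigma\!}E \cong E_{\sigma^{-1}}$, and the opposite-module version of Lemma \ref{lem:strongperiodic}, together with the fact that $Z \ltens_A Q \cong 0$ also gives $\Hom_A(Q, Z) \cong 0$ on the relevant side via the right-module perverse equivalence $- \ltens_A X^\vee$ (Proposition \ref{prop:rightmodperverse}). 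The main obstacle I anticipate is the careful bookkeeping in the middle step: pinning down the automorphism $\sigma$ from the perverse bijection on projectives and checking that the triangle $Y \to E \to {}_{\sigma\!}E[n]$ genuinely lands in the bimodule derived category (not merely in $D^b(E)$ on each side), so that $\alpha$ is a bona fide Hochschild-type class — this is where the symmetry of $A$ and the self-duality built into $X^\vee \cong \rhom_A(X,A)$ must be used to glue the one-sided statements into a two-sided one.
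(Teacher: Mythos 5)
Your overall skeleton matches the paper: extract $P,Q$ from the Serre filtration, complete the map $A \to X$ coming from the natural transformation to a triangle $Z \to A \to X$, apply $P^\vee \ltens_A - \ltens_A P$ to land in $\bbimder{E}{E}$, identify the third term as ${}_{\sigma\!}E[n]$ via the right- and left-module perverse structure, and conclude via Lemma \ref{lem:strongperiodic}. That much is faithful. However, there is a genuine mathematical error in the step where you verify the perfectness of $Y \ltens_E M$. You assert that since $Z \ltens_A S_j \cong 0$ for $j \in J$, ``iterating over the filtration of $Q$ by such simples'' gives $Z \ltens_A Q \cong 0$. This is false: $Q = \bigoplus_{j\in J} P_j$ is projective, and the composition factors of $P_j$ are emphatically \emph{not} all in $\mathcal{S}_J$; indeed the whole point of the construction is that the $P_j$ have many composition factors indexed by $I \setminus J$. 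The natural transformation hypothesis only gives $Z \ltens_A V \cong 0$ for $V \in D^b_{\mathcal{A}_1}(A)$, and $Q$ does not lie in that subcategory. (Had $Z \ltens_A Q \cong 0$ been true, you would get $\Phi(Q) \cong Q$, which is much stronger than, and inconsistent with, what Proposition \ref{prop:perverseproj} forces for a width-$n$ perverse equivalence.)

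What the paper actually does here is more delicate and is the content of Lemma \ref{lem:ZinthickP}: one shows that the vanishing $Z \ltens_A V = 0$ for all $V\in D^b_{\mathcal{A}_1}(A)$ forces $Z \in \langle P^\vee\rangle$ in $D^b(\opcat{A})$, hence $P^\vee \ltens_A Z \in \langle P^\vee\rangle$ as well. This membership is what allows one to collapse the adjunction: for $W \in \langle P^\vee\rangle$, the unit gives $W \ltens_A P \ltens_E P^\vee \cong W$, so
\[
Y \ltens_E M \;\cong\; P^\vee \ltens_A Z \ltens_A P \ltens_E P^\vee \ltens_A Q \;\cong\; P^\vee \ltens_A Z \ltens_A Q.
\]
This is not zero; one then observes from the triangle $P^\vee \ltens_A Z \to P^\vee \to P^\vee \ltens_A X$ that $P^\vee \ltens_A Z$ is perfect in $D^b(\opcat{A})$ (since both $P^\vee$ and $X$ are perfect on the appropriate sides), whence $P^\vee \ltens_A Z \ltens_A Q$ is perfect in $D^b(E)$ because $Q$ is projective. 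Without Lemma \ref{lem:ZinthickP} (or something playing its role) you cannot pass from $P^\vee \ltens_A Z \ltens_A P \ltens_E P^\vee \ltens_A Q$ to $P^\vee \ltens_A Z \ltens_A Q$, and your replacement claim $Z \ltens_A Q \cong 0$ does not hold. The same missing ingredient affects the dual computation for $M^\vee$. I would also flag that identifying the bimodule $P^\vee \ltens_A X \ltens_A P$ as ${}_{\sigma\!}E[n]$ requires an argument that the induced ring map $\sigma$ is not merely an endomorphism but an automorphism; you gesture at this but it does need the two one-sided identifications ($P^\vee \ltens_A X \cong P^\vee[n]$ on the right via Proposition \ref{prop:rightmodperverse}, and $X \ltens_A P \cong P[n]$ on the left), as in the paper.
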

	
	We first note that the standard restriction on the derived equivalence $\Phi$ is not too strong. Indeed, if we were to drop this assumption, then by Theorem \ref{thm:standardderived}, there is a standard derived equivalence \begin{tikzcd}[cramped, sep=small]
		X \ltens_A -: D^b(A) \arrow[r,"\sim"] & D^b(A)
	\end{tikzcd} agreeing with $\Phi$ on objects of $D^b(A)$. Moreover, by Proposition \ref{prop:perversestandard}, if $\Phi$ is perverse relative to the stated filtration, then $X \ltens_A -$ is, too. 
	
	It will also be prudent to investigate the condition on the natural transformation $\Id_{D^b(A)} \to \Phi$. Since $\Phi$ is standard, there is some $X \in \bbimder{A}{A}$ such that $\Phi = X \ltens_A -$. Similarly, the identity functor is such that $\Id_{D^b(A)} = A \ltens_A -$. Thus, we have a natural transformation \begin{tikzcd}[cramped, sep=small]
		A \ltens_A - \arrow[r] & X \ltens_A -,
	\end{tikzcd} which by Yoneda's Lemma must come from a morphism \begin{tikzcd}[cramped, sep=small]
		A \arrow[r,"h"] & X
	\end{tikzcd} in $\bbimder{A}{A}$. Since $\Phi$ is perverse relative to the given filtration, we have a commutative diagram
	\begin{center}
		\begin{tikzcd}
			D^b(A) \arrow[r,"\Phi{[n]}"] & D^b(A) \\
			D^b_{\mathcal{A}_1}(A) \arrow[r,"\sim"] \arrow[u,hook] & D^b_{\mathcal{A}_1}(A) \arrow[u,hook] \\
			\mathcal{A}_1 \arrow[r,"\sim"] \arrow[u,hook] & \mathcal{A}_1 \arrow[u,hook]
		\end{tikzcd}
	\end{center}
	and $\Phi$ restricts in this way to an autoequivalence $\Phi|_{\mathcal{A}_1}$ of $\mathcal{A}_1$. For every $A$-module $V \in \mathcal{A}_1$, the induced map
	\begin{center}
		\begin{tikzcd}[row sep=small]
			A \ltens_A V \arrow[r,"h \ltens_A V"] \arrow[d,equal] & X \ltens_A V \arrow[d,equal] \\
			V \arrow[r,"\sim"] & \Phi(V)
		\end{tikzcd}
	\end{center}
	is an isomorphism.
	
	Next, we note that we may assume that the projective $A$-modules $P$ and $Q$ are direct sums of projective indecomposable modules, no two of which are isomorphic.
	
	\begin{prop}\label{prop:Aisbasic1}
		Let $P$ and $Q$ be projective $A$-modules such that $P \oplus Q$ is a projective generator of $A$. Let $J \subset I$ be such that $P = \bigoplus_{i \in I \setminus J}P_i^{m_i}$ and $Q = \bigoplus_{j \in J} P_j^{m_j}$ for some integers $m_i,m_j \ge 1$. Set $P' = \bigoplus_{i \in I \setminus J}P_i$ and $Q' = \bigoplus_{j \in J} P_j$. With $E=\opcat{\End_A(P)}$ and $M= \Hom_A(P,Q)$, and $E'=\opcat{\End_A(P')}$ and $M'= \Hom_A(P',Q')$, there is an automorphism $\sigma$ of $E$ and $n \in \mathbb{Z}_+$ such that the $E$-module $M$ is strongly $\sigma$-periodic of period $n$ if and only if there is an automorphism $\sigma'$ of $E'$ such that the $E'$-module $M'$ is strongly $\sigma'$-periodic of period $n$.
	\end{prop}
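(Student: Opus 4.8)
The plan is to recognise the passage from $(E,M)$ to $(E',M')$ as an instance of Morita equivalence and to check, via the perfectness criterion of Lemma~\ref{lem:strongperiodic}, that strong periodicity is insensitive to it.

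First I would make the Morita equivalence explicit. As $P'$ is obtained from $P$ by discarding multiplicities of indecomposable summands, $P \in P'\cadd$ and $P' \in P\cadd$. Choosing one copy of each indecomposable summand of $P$ determines an idempotent $e \in E=\opcat{\End_A(P)}$ with image $eP \cong P'$; then $eEe \cong \opcat{\End_A(P')} = E'$, and $EeE = E$ since $Ee \cong \Hom_A(P,P')$ has every indecomposable projective $E$-module as a summand. So $e$ exhibits a Morita equivalence between $E$ and $E'$, with inverse bimodules $Ee$ and $eE$, and $eE \ltens_E -\colon D^b(E) \to D^b(E')$ carries $M_j := \Hom_A(P,P_j)$ to $M'_j := \Hom_A(P',P_j)$ for $j \in J$. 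Finally $M = \bigoplus_{j} M_j^{m_j}$ while $M' = \bigoplus_{j} M'_j$ and $eM = \Hom_A(P',Q) = \bigoplus_{j}{M'_j}^{m_j}$, so $M'$ and $eM$ have the same indecomposable summands.

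Now fix $\sigma$ and $\alpha \in \Ext^n_{E\otimes_k\opcat{E}}(E,{}_{\sigma\!}E)$, and let $Y$ be the fibre of $\alpha\colon E \to {}_{\sigma\!}E[n]$ as in Lemma~\ref{lem:strongperiodic}. Since $\Perf(E)$ is thick and $-\ltens_E M$ is additive, $Y \ltens_E M$ is perfect iff each $Y \ltens_E M_j$ is; so by Lemma~\ref{lem:strongperiodic} the strong $\sigma$-periodicity of $M$ relative to $\alpha$ is equivalent to that of every $M_j$, and likewise (using that $M'$ and $eM$ have the same summands) the strong $\sigma'$-periodicity of $M'$ relative to any $\alpha'$ is equivalent to that of every $M'_j$. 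It therefore suffices to transport the condition ``$Y \ltens_E M_j$ is perfect for every $j$'' across the Morita equivalence. The bimodules $eE, Ee$ induce a triangulated equivalence $\Psi\colon \bbimder{E}{E} \to \bbimder{E'}{E'}$ with $\Psi(E) \cong E'$, and $\Psi({}_{\sigma\!}E)$ is an invertible $E'$-bimodule, hence $\cong {}_{\sigma'}E'$ for some automorphism $\sigma'$ of $E'$ (as $E'$ is basic, every invertible bimodule is a twist). Put $\alpha' := \Psi(\alpha) \in \Ext^n_{E'\otimes_k\opcat{E'}}(E',{}_{\sigma'}E')$, so $Y' := \mathrm{fib}(\alpha') \cong \Psi(Y)$; using $Ee \ltens_{E'} eE \cong E$ in $\bbimder{E}{E}$ one obtains $Y' \ltens_{E'} M'_j \cong eE \ltens_E (Y \ltens_E M_j)$, which is perfect in $D^b(E')$ iff $Y \ltens_E M_j$ is perfect in $D^b(E)$, since $eE \ltens_E -$ is an equivalence preserving and reflecting perfectness. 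This gives one direction; the converse is the same argument run through $\Psi^{-1}$, the only extra input being that every invertible $E$-bimodule is a twist ${}_{\sigma\!}E$ — true for the basic algebra $E'$, and inherited by $E$ by Morita invariance of the Picard and outer automorphism groups.

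The reduction to the components $M_j$ and the additivity bookkeeping are immediate once Lemma~\ref{lem:strongperiodic} is available; I expect the genuine obstacle to be the bimodule transport in the last step — verifying that $\Psi$ carries the twisted bimodule ${}_{\sigma\!}E$ to a twisted bimodule, that it carries the defining triangle of $Y$ to that of $Y'$, and, for the converse, that the invertible $E$-bimodule produced by $\Psi^{-1}$ really is of the form ${}_{\sigma\!}E$ for an automorphism $\sigma$ of $E$.
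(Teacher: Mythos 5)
Your framework — Morita transport via the idempotent $e$, together with the perfectness criterion of Lemma~\ref{lem:strongperiodic} — is the same machinery the paper uses (the paper works with the progenerator bimodule $V=\Hom_A(P,P')$ and its dual rather than $eE,Ee$, but these are the same thing). Your reduction to the indecomposable components $M_j$ is a clean additional bookkeeping step, and your forward direction (transport $\alpha$ to $\alpha'$, observe $\Psi({}_{\sigma\!}E)$ is an invertible bimodule over the \emph{basic} algebra $E'$, hence a twist) matches the paper's argument in substance.

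The converse is where your proof breaks. You write that every invertible $E$-bimodule is a twist because this holds for the basic algebra $E'$ and is ``inherited by $E$ by Morita invariance of the Picard and outer automorphism groups.'' This is false: $\Pic_k(-)$ is Morita invariant, but the outer automorphism group is \emph{not}, and more to the point the surjectivity of $\mathrm{Out}(-)\to\Pic_k(-)$ is not a Morita-invariant property. Concretely, take $E=k\times M_2(k)$ Morita equivalent to $E'=k\times k$: then $\Pic_k(E)\cong\Pic_k(E')\cong\mathbb{Z}/2$, but $\mathrm{Out}(E)$ is trivial (any automorphism fixes the two factors, and automorphisms of $M_n(k)$ are inner by Skolem--Noether), while $\mathrm{Out}(E')\cong\mathbb{Z}/2$. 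So there is a non-trivial invertible $E$-bimodule that is not of the form ${}_{\sigma\!}E$. In general an invertible $E$-bimodule $X$ is a twist precisely when $X\cong E$ as a left (equivalently right) $E$-module, which amounts to the induced permutation of indecomposable projectives preserving the multiplicities $m_i$ appearing in $E\cong\bigoplus_i\overbar{P}_i^{m_i}$; this is automatic when $E$ is basic but not otherwise.

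The paper avoids the appeal to Picard/Out altogether: it sets $W=V\otimes_{E'}{}_{\sigma'\!}E'[n]\otimes_{E'}V^\vee$ and argues that $W$ restricts to $E[n]$ both in $D^b(\opcat{E})$ and in $D^b(E)$, then uses the elementary fact that an $E$-$E$-bimodule free of rank one on each side must be a twist ${}_{\sigma\!}E$. That is the step you need to replace your false general principle with — and it is exactly the one you flagged as the genuine obstacle, so you diagnosed the difficulty correctly but resolved it with a claim that is not true. (It is worth noting that the paper's assertion that $W\cong E[n]$ on both sides also implicitly uses that the permutation induced by $\sigma'$ is compatible with the multiplicities $m_i$; at a minimum, this deserves more care than either your write-up or the phrase ``it is clear that'' in the paper gives it.)
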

	
	\begin{proof}
		Let $V = \Hom_A(P,P') \cong P^\vee \otimes_A P'$. Then $V$ is an $E$-$E'$-bimodule, projective as a left $E$-module and as a right $E'$-module. As an $E$-module, $V$ is a projective generator, so by standard Morita theory induces a Morita equivalence \begin{tikzcd}[cramped,sep=small]E\modcat \arrow[r,"\sim"] & E'\modcat. \end{tikzcd} In particular, $V \otimes_{E'} V^\vee \cong E$, and $V^\vee \otimes_E V \cong E'$.
		
		Let $\ve$ be the counit of the adjunction $P \otimes_E - \dashv P^\vee \otimes_A - $. Then, since $P \otimes_E P^\vee \otimes_A P \cong P \otimes_E E \cong P$, the map \begin{tikzcd}[cramped, sep=small]
			\ve_{P}: P \otimes_E P^\vee \otimes_A P \arrow[r] & P
		\end{tikzcd} is an isomorphism, and hence so is \begin{tikzcd}[cramped, sep=small]
			\ve_{P'}: P \otimes_E P^\vee \otimes_A P' \arrow[r] & P',
		\end{tikzcd} as $P' \in P\cadd$. Since $V=P^\vee \otimes_A P'$, we thus have $P \otimes_E V \cong P'$, and $P \cong P' \otimes_{E'} V^\vee$.
		
		Similarly, if $\eta$ is the unit of the adjunction $- \otimes_{E'} (P')^\vee \dashv - \otimes_A P$, then the map \begin{tikzcd}[cramped, sep=small]
			\eta_{P^\vee}: P^\vee \arrow[r] & P^\vee \otimes_A P' \otimes_{E'} (P')^\vee
		\end{tikzcd} is an isomorphism, since $P^\vee \in (P')^\vee\cadd$. Again, $P^\vee \otimes_A P' \cong V$, so we have $P^\vee \cong V \otimes_{E'} (P')^\vee$, and $V^\vee \otimes_E P^\vee \cong (P')^\vee$.
		
		Suppose first that there is some automorphism $\sigma$ of $E$ such that $M$ is strongly $\sigma$-periodic of period $n$, relative to $\alpha \in \Ext^n_{E\otimes_k\opcat{E}}(E,{}_{\sigma\!}E)$. By Lemma \ref{lem:strongperiodic}, there is a distinguished triangle
		\begin{center}
			\begin{tikzcd}[sep=small]
				Y \arrow[r] & E \arrow[r, "\alpha"] & {}_{\sigma\!}E[n] \arrow[r,rightsquigarrow] & {}
			\end{tikzcd}
		\end{center}
		such that $Y \ltens_E M$ is a perfect object in $D^b(E)$. Since $V = P^\vee \otimes_A P'$ is projective as a left $E$-module, the functor \[V^\vee \otimes_E - \otimes_E V: E \otimes_k \opcat{E}\modcat \to E' \otimes_k \opcat{(E')}\modcat\] is exact. Thus, we obtain a triangle
		\begin{center}
			\begin{tikzcd}[sep=small]
				Y' \arrow[r] & E' \arrow[r,"\alpha'"] & {}_{\sigma'\!}E'[n] \arrow[r,rightsquigarrow] & {},
			\end{tikzcd}
		\end{center}
		where $Y' = V^\vee \otimes_E Y \otimes_E V$, $\alpha' = V^\vee \otimes_E \alpha \otimes_E V$, and $\sigma'$ is the restriction of $\sigma$ to $E'$, noting that $V^\vee \otimes_E E \otimes_E V \cong V^\vee \otimes_E V \cong E'$. Then
		\begin{align*}
			Y' \ltens_{E'} M' &\cong (V^\vee \otimes_E Y \otimes_E V) \ltens_{E'} ((P')^\vee \otimes_A Q') \\
			&\cong V^\vee \otimes_E Y \ltens_E P^\vee \otimes_A Q',
		\end{align*}
		since $V \otimes_{E'} (P')^\vee \cong P^\vee$. Since $Q' \in Q\cadd$ and $Y \ltens_E M \cong Y \ltens_E P^\vee \otimes_A Q$ is perfect by assumption, the same is true of $Y \ltens_E P^\vee \otimes_A Q'$. Thus, since $V^\vee$ is projective as a right $E$-module, $Y'\ltens_{E'} M'$ is perfect in $D^b(E')$. By Lemma \ref{lem:strongperiodic}, $M'$ is a strongly $\sigma'$-periodic $E'$-module, relative to $\alpha'$.
		
		Conversely, suppose that there is $\alpha' \in \Ext^n_{E'\otimes_k\opcat{E'}}(E',{}_{\sigma\!}E')$ and an automorphism $\sigma'$ of $E'$ such that $M'$ is strongly $\sigma'$-periodic relative to $\alpha'$. Similarly to the above, we have a triangle
		\begin{center}
			\begin{tikzcd}[sep=small]
				Y' \arrow[r] & E' \arrow[r,"\alpha'"] & {}_{\sigma'\!}E'[n] \arrow[r,rightsquigarrow] & {},
			\end{tikzcd}
		\end{center}
		and a triangulated functor \[V \otimes_{E'} - \otimes_{E'} V: \bbimder{E'}{E'} \to \bbimder{E}{E}.\] Set $Y= V \otimes_{E'} Y' \otimes_{E'} V$. We have $E \cong V \otimes_{E'} E' \otimes_{E'} V$. We thus have a triangle
		\begin{center}
			\begin{tikzcd}[sep=small]
				Y \arrow[r] & E \arrow[r,"\alpha"] & W \arrow[r,rightsquigarrow] & {}
			\end{tikzcd}
		\end{center}
		in $\bbimder{E}{E}$, where $W = V \otimes_{E'} {}_{\sigma'\!}E'[n] \otimes_{E'} V$. It is clear that $W \cong E[n]$ in $D^b(\opcat{E})$. Thus, as an object of $\bbimder{E}{E}$, $W$ is isomorphic to the $E$-$E$-bimodule $E$ concentrated in degree $n$, with the regular right action of $E$. The left action of $E$ then passes through an algebra homomorphism $\sigma: E \to E$. But it is clear that $W \cong E[n]$ when restricted to $D^b(E)$, too, so the algebra homomorphism $\sigma$ is an automorphism. Therefore, $W \cong {}_{\sigma\!}E[n]$ in $\bbimder{E}{E}$, and we have a triangle
		\begin{center}
			\begin{tikzcd}[sep=small]
				Y \arrow[r] & E \arrow[r,"\alpha"] & {}_{\sigma\!}E[n] \arrow[r,rightsquigarrow] & {},
			\end{tikzcd}
		\end{center}
		from which, by an analogous argument to the above, it follows that $M$ is strongly $\sigma$-periodic relative to $\alpha$.
	\end{proof}
	
	The obvious dual statement for right modules is also true, in a very similar way. In particular, we may assume that $A$ is basic and that $A \cong P \oplus Q$ as $A$-modules.
	
	Finally, we require a lemma.
	
	\begin{lem}\label{lem:ZinthickP}
		Let $P$ be a projective $A$-module. Let $\mathcal{A}_1$ be the Serre subcategory of $A\modcat$ prime to $P$. Then for a perfect complex $Z \in D^b(\opcat{A})$, we have $Z \in \langle P^\vee \rangle$ if and only if $Z \ltens_A V = 0$ for all $V \in D^b_{\mathcal{A}_1}(A)$.
	\end{lem}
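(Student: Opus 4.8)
The plan is to unwind both directions from the explicit form of $\langle P^\vee\rangle$ together with exactness of tensoring with a flat module. Write $P\cong\bigoplus_{i\in I\setminus J}P_i^{m_i}$, so that $\mathcal{A}_1$ is generated by $\{S_j\}_{j\in J}$; correspondingly $P^\vee=\Hom_A(P,A)$ is a projective right $A$-module whose indecomposable summands are exactly the $P_i^\vee:=\Hom_A(P_i,A)$ with $i\in I\setminus J$. I will use repeatedly the identification $P_i^\vee\otimes_A W\cong\Hom_A(P_i,W)$ (valid since $P_i$ is finitely generated projective) and, since $k$ is algebraically closed, $\dim_k\Hom_A(P_i,S_\ell)=\delta_{i\ell}$; in particular $\dim_k\Hom_A(P_i,W)=[W:S_i]$ for any finite-dimensional $W$.

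For the implication ``$\Rightarrow$'', fix $V\in D^b_{\mathcal{A}_1}(A)$. The full subcategory of those $Z'\in D^b(\opcat A)$ with $Z'\ltens_A V=0$ is thick, being the kernel of the triangulated functor $-\ltens_A V$, so to see it contains $\langle P^\vee\rangle$ it is enough to check $P^\vee\ltens_A V=0$. As $P^\vee$ is flat, $P^\vee\ltens_A V=P^\vee\otimes_A V$ and $P^\vee\otimes_A-$ is exact, so $H_*(P^\vee\otimes_A V)\cong P^\vee\otimes_A H_*(V)$. Every composition factor of $H_*(V)$ lies in $\{S_j\}_{j\in J}$, and for such a module $W$ one has $P^\vee\otimes_A W\cong\bigoplus_{i\in I\setminus J}\Hom_A(P_i,W)^{m_i}=0$ since $[W:S_i]=0$ for $i\notin J$. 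Hence $P^\vee\ltens_A V=0$, and intersecting over all $V$ gives $Z\ltens_A V=0$ for every $Z\in\langle P^\vee\rangle$ and every $V\in D^b_{\mathcal{A}_1}(A)$.

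For ``$\Leftarrow$'', suppose $Z$ is perfect with $Z\ltens_A V=0$ for all $V\in D^b_{\mathcal{A}_1}(A)$; it suffices to use $V=S_j$ for $j\in J$. Replace $Z$ by a minimal complex of finitely generated projective right $A$-modules, so the differentials have entries in $\rad A$, and write $Z_i\cong\bigoplus_\ell(P_\ell^\vee)^{a_{i\ell}}$. Because $\rad A$ annihilates every simple module, the differential of $Z\otimes_A S_j$ is zero, so $H_i(Z\otimes_A S_j)\cong Z_i\otimes_A S_j$, a space of dimension $a_{ij}$. Since $S_j\in\mathcal{A}_1$, the hypothesis forces $Z\otimes_A S_j=Z\ltens_A S_j=0$, hence $a_{ij}=0$ for all $i$ and all $j\in J$. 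Thus every term of the minimal complex $Z$ lies in $P^\vee\cadd$, and a bounded complex all of whose terms lie in $P^\vee\cadd$ belongs to the triangulated, hence thick, subcategory generated by $P^\vee$, by induction on its length using stupid truncations. Therefore $Z\in\langle P^\vee\rangle$.

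The routine but slightly delicate points, rather than genuine obstacles, are: replacing $Z$ by its minimal projective representative so that $Z\otimes_A S_j$ has vanishing differential and its homology reads off the multiplicities $a_{ij}$; and the observation that a bounded complex of objects of $P^\vee\cadd$ already lies in $\langle P^\vee\rangle$. Both are standard over a finite-dimensional algebra, so I do not anticipate a real difficulty.
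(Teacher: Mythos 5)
Your proof is correct, and it takes a genuinely different route from the paper's. The paper proves the reverse direction by contradiction: it passes to the $A$-linear dual $X = Z^\vee$, reformulates the hypothesis as $\Hom_{D^b(A)}(Z^\vee, V[t])=0$, assumes a counterexample with the length of the projective complex minimal, and then argues that either the degree-$0$ term lies in $P\cadd$ (contradicting minimality after a stupid truncation) or it has a top composition factor in $\mathcal{A}_1$, which produces a nonzero map to a simple in $\mathcal{A}_1$, a contradiction. Your argument instead represents $Z$ by a \emph{minimal} bounded complex of projective right $A$-modules (differentials into the radical), observes that $Z\otimes_A S_j$ then has vanishing differential, and reads off the multiplicity of $P_j^\vee$ in each term directly from $H_*(Z\otimes_A S_j)=0$; this shows every term already lies in $P^\vee\cadd$, and a bounded complex with all terms in $P^\vee\cadd$ is built from $P^\vee$ by shifts and cones via stupid truncations. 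Both arguments are sound. Yours is more direct and avoids the minimality-of-length bookkeeping, at the price of invoking the existence and behaviour of minimal complexes (standard for finite-dimensional algebras but not something the paper needs explicitly); the paper's argument would survive verbatim in settings where minimal complexes are less available. In the context of this paper, where all algebras are finite-dimensional, your version is arguably the cleaner proof.
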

	
	We comment that the $V$ in Lemma \ref{lem:ZinthickP} is different from the $V$ in Proposition \ref{prop:Aisbasic1}.
	
	\begin{proof}
		First, note that for $V \in D^b_{\mathcal{A}_1}(A)$, since $\mathcal{A}_1$ is prime to $P$, we have $P^\vee \ltens_A V = \Hom_A(P,V) \cong 0$, so one direction is clear. 
		
		For the other, suppose that $Z \ltens_A V = 0$ for all $V \in D^b_{\mathcal{A}_1}(A)$. It incurs no loss of generality to assume that $Z$ is a bounded below complex of projective $\opcat{A}$-modules, with $Z_0 \neq 0$ and $Z_m = 0$ for all $m < 0$. Assume $Z$ is such that the maximum non-zero degree $N_Z = \max\{m \ge 0: Z_m \ne 0\}$ is minimal among objects of $D^b(\opcat{A})$ with these properties. Since $Z$ is perfect, $N_Z$ is finite. 
		
		Suppose for a contradiction that $Z \not\in \langle P^\vee \rangle$. By assumption, we have $0 = Z \ltens_A V \cong \rhom_A(Z^\vee,V)$, for every $V \in D^b_{\mathcal{A}_1}(A)$, and for every $t \in \mathbb{Z}$, $H_t(\rhom_A(Z^\vee,V)) = \Hom_{D^b(A)}(Z^\vee, V[t]) = 0$. Let $X = Z^\vee \in D^b(A)$. Suppose that $X_0 \in P\cadd$. Then there is a commutative diagram
		\begin{center}
			\begin{tikzcd}[sep=small] 
				& & & 0 \arrow[r] & X_0 \arrow[d] \arrow[r] & 0 \\
				0 \arrow[r] & X_{N_Z} \arrow[r] \arrow[d] & \ldots \arrow[r] & X_1 \arrow[r] \arrow[d] & X_0 \arrow[r] & 0 \\ 
				0 \arrow[r] & X_{N_Z} \arrow[r] & \ldots \arrow[r]  & X_1 \arrow[r] & 0
			\end{tikzcd}
		\end{center}
		giving rise to a triangle 
		\begin{center}
			\begin{tikzcd}[sep=small]
				X \arrow[r] & X' \arrow[r] & X_0[1] \arrow[r,rightsquigarrow] & {}
			\end{tikzcd}
		\end{center}
		in $D^b(A)$, with $X'$ the complex defined by the third row of the diagram. Since $X_0 \in P\cadd$ and by the assumption on $Z$, we must have $\rhom_A(X',V) \cong 0$ for every $V \in D^b_{\mathcal{A}_1}(A)$. This contradicts the minimality of $Z$, since $(X')^\vee$ has strictly smaller maximum non-zero degree. Therefore $X_0 \not\in P\cadd$. 
		
		Then, with $X_0 \not\in P\cadd$, there is some simple module $S$ in $D^b_{\mathcal{A}_1}(A)$ such that $S$ is a summand of $X_0/\rad(X_0)$. But then the morphism
		\begin{center}
			\begin{tikzcd}[sep=small]
				0 \arrow[r] & X_{N_Z} \arrow[r] & \ldots \arrow[r] & X_1 \arrow[r] & X_0 \arrow[r] \arrow[d] & 0 \\
				& & & 0 \arrow[r] & S \arrow[r] & 0
			\end{tikzcd}
		\end{center}
		is a non-zero element of $\Hom_{D^b(A)}(X,S) = \Hom_{D^b(A)}(Z^\vee,S)$. This is a contradiction. Hence, $Z \in \langle P^\vee \rangle$.
	\end{proof}
	
	We now have all the tools to prove Theorem \ref{thm:perversetoperiodic}.
	
	\begin{proof}[Proof of Theorem \ref{thm:perversetoperiodic}]
		We will assume that $A$ is basic. By Proposition \ref{prop:Aisbasic1}, this is a legitimate assumption.
		
		Let $J \subset I$ be such that set $\{S_j\}_{j \in J}$ generates $\mathcal{A}_1$. Set $P= \oplus_{i \in I \setminus J} P_i$ and $Q = \oplus_{j \in J} P_j$. Then by construction, $\mathcal{A}_1$ is the Serre subcategory prime to $P$, $P$ and $Q$ have no common direct summands, and $A \cong P \oplus Q$ as $A$-modules.
		
		Since $\Phi$ is standard, there is some $X \in \bbimder{A}{A}$ such that $\Phi=X \ltens_A -$. By assumption, we have a map \begin{tikzcd}[cramped, sep=small]A \arrow[r,"h"] & X\end{tikzcd} in $\bbimder{A}{A}$. This gives rise to a triangle
		\begin{center}
			\begin{tikzcd}[sep=small]
				Z \arrow[r] & A \arrow[r,"h"] & X \arrow[r,rightsquigarrow] & {}
			\end{tikzcd}
		\end{center}
		in $\bbimder{A}{A}$, say $\Delta$. Applying the triangulated functor
		\begin{center}
			\begin{tikzcd}[sep=small]
				\rhom_{A\text{-}A}(P \otimes_k P^\vee, -): \bbimder{A}{A} \arrow[r] & \bbimder{E}{E}
			\end{tikzcd}
		\end{center}
		to $\Delta$, we obtain a triangle
		\begin{center}
			\begin{tikzcd}[sep=small]
				P^\vee \ltens_A Z \ltens_A P \arrow[r] & P^\vee \ltens_A A \ltens_A P \arrow[r] & P^\vee \ltens_A X \ltens_A P \arrow[r,rightsquigarrow] & {}
			\end{tikzcd}
		\end{center}
		in $\bbimder{E}{E}$.
		
		First, we have $P^\vee \ltens_A A \ltens_A P \cong E$. Next, we note that by Proposition \ref{prop:rightmodperverse}, the equivalence
		\begin{center}
			\begin{tikzcd}[sep=small]
				- \ltens_A X: D^b(\opcat{A}) \arrow[r,"\sim"] & D^b(\opcat{A})
			\end{tikzcd}
		\end{center}
		is also a perverse equivalence, relative to 
		\[
		0 \subset_0 \mathcal{A}'_1 \subset_n \opcat{A}\modcat,
		\]
		where $\mathcal{A}'_1$ is the Serre subcategory of $\opcat{A}\modcat$ prime to $P^\vee$. In particular, we have $P^\vee \ltens_A X \cong P^\vee[n]$ as an object of $D^b(\opcat{A})$. We thus have that $P^\vee \ltens_A X \ltens_A P \cong P^\vee[n] \ltens_A P \cong E[n]$ in $D^b(\opcat{E})$. That is, in $\bbimder{E}{E}$, $P^\vee \ltens_A X \ltens_A P$ is isomorphic to the $E$-$E$-bimodule $E$ concentrated in degree $n$, with the regular right action of $E$. The left action of $E$ on this $E$-$E$-bimodule must therefore pass through some algebra homomorphism \begin{tikzcd}[cramped,sep=small]\sigma: E \arrow[r] & E.\end{tikzcd} But note that $X \ltens_A P \cong P[n]$ in $D^b(A)$, so that $P^\vee \ltens_A X \ltens_A P \cong P^\vee \ltens_A P[n] \cong E[n]$ in $D^b(E)$, too. Thus, the homomorphism $\sigma$ must be an isomorphism. In other words, $P^\vee \ltens_A X \ltens_A P \cong {}_{\sigma\!}E[n]$ in $\bbimder{E}{E}$.
		
		Setting $Y = P^\vee \ltens_A Z \ltens_A P$, we therefore have a triangle
		\begin{center}
			\begin{tikzcd}[sep=small]
				Y \arrow[r] & E \arrow[r,"\alpha"] & {}_{\sigma\!}E[n] \arrow[r,rightsquigarrow] & {}
			\end{tikzcd}
		\end{center}
		in $\bbimder{E}{E}$, say $\nabla$. This defines an element $\alpha \in \Ext^n_{E\text{-}E}(E, {}_{\sigma\!}E[n])$. By Lemma \ref{lem:strongperiodic}, to prove that $M$ is strongly $\sigma$-periodic relative to $\alpha$, it suffices to show that $Y \ltens_E M$ is a perfect object in $D^b(E)$.
		
		To this end, take the object $Y \ltens_E M$ of $D^b(E)$. We have $Y \ltens_E M \cong P^\vee \ltens_A Z \ltens_A P \ltens_E P^\vee \otimes_A Q$. Consider the adjunction $- \ltens_A P \dashv - \ltens_E P^\vee$. For any object $W$ in the thick subcategory $\langle P^\vee \rangle$ of $D^b(\opcat{A})$, we have $W \ltens_A P \ltens_E P^\vee \cong W$ in $D^b(\opcat{A})$. Thus, if we can show that $P^\vee \ltens_A Z \in \langle P^\vee \rangle$, then we will have $P^\vee \ltens_A Z \ltens_A P \ltens_E P^\vee \cong P^\vee \ltens_A Z$, so that $Y \ltens_E M \cong P^\vee \ltens_A Z \ltens_A Q$. Since $P^\vee \ltens_A Z$ is a summand of $A \ltens_A Z$, as $P^\vee$ is projective, we need only show that $Z \in \langle P^\vee \rangle$, considered as an object of $D^b(\opcat{A})$. By Lemma \ref{lem:ZinthickP}, this is equivalent to showing that $Z \ltens_A V = 0$ for all $V \in D^b_{\mathcal{A}_1}(A)$. 
		
		Given $V \in D^b_{\mathcal{A}_1}(A)$, we have a triangle $\Delta \ltens_A V$,
		\begin{center}
			\begin{tikzcd}
				Z \ltens_A V \arrow[r] & A \ltens_A V \arrow[r,"h \ltens_A V"] & X \ltens_A V \arrow[r,rightsquigarrow] & {}.
			\end{tikzcd}
		\end{center}
		By assumption, $h \ltens_A V$ is an isomorphism, so $Z \ltens_A V = 0$. Thus, $Y \ltens_E M \cong P^\vee \ltens_A Z \ltens_A Q$.
		
		By Lemma \ref{lem:strongperiodic}, it suffices to show that this right hand side is perfect in $D^b(E)$. The object $P^\vee \ltens_A Z$ fits into a triangle
		\begin{center}
			\begin{tikzcd}[sep=small]
				P^\vee \ltens_A Z \arrow[r] & P^\vee \ltens_A A \arrow[r] & P^\vee \ltens_A X \arrow[r,rightsquigarrow] & {}.
			\end{tikzcd}
		\end{center}
		Clearly, $P^\vee \ltens_A A \cong P^\vee$ is a perfect complex of $\opcat{A}$-modules. By assumption, $X$ is perfect in $D^b(A)$ and in $D^b(\opcat{A})$. Thus, since $P^\vee$ is a projective $\opcat{A}$-module, $P^\vee \ltens_A X$ is perfect in $D^b(\opcat{A})$, so $P^\vee \ltens_A Z$ is, too. Then, since $Q$ is a projective $A$-module, the object $P^\vee \ltens_A Z \ltens_A Q \cong Y \ltens_E M$ is perfect in $D^b(E)$. This completes the proof of the claim for $M$.
		
		For the claim on $M^\vee$, it suffices to show that $M^\vee \ltens_E Y \cong Q^\vee \otimes_A P \ltens_E P^\vee \ltens_A Z \ltens_A P$ is perfect in $D^b(\opcat{E})$. A similar argument to the above, using the obvious dual statement to Lemma \ref{lem:ZinthickP}, will show that $Z \ltens_A P \in \langle P \rangle$, so that from the adjunction $P^\vee \ltens_A - \dashv P \ltens_E -$ we have $M^\vee \ltens_E Y \cong Q^\vee \otimes_A P \ltens_E P^\vee \ltens_A Z \ltens_A P \cong Q^\vee \ltens_A Z \ltens_A P$. Then, the triangle
		\begin{center}
			\begin{tikzcd}[sep=small]
				Z \ltens_A P \arrow[r] & A \ltens_A P \arrow[r] & X \ltens_A P \arrow[r,rightsquigarrow] & {}
			\end{tikzcd}
		\end{center}
		guarantees that $Z \ltens_A P$ is perfect in $D^b(A)$, so that, since $Q$ is projective and $A$ is symmetric, $M^\vee \ltens_E Y \cong Q^\vee \ltens_A Z \ltens_A P$ is perfect in $D^b(\opcat{E})$.
	\end{proof}
	
	We thus have the first part of Theorem \ref{thm:main}.

	\subsection{Sufficient conditions}\label{subsec:suffconds}

	The next step is to show that the converse to Theorem \ref{thm:perversetoperiodic} also holds true. That is, twisted strongly periodic $E$-modules of period $n$ give rise to two-step self-perverse equivalences of width $n$ of the appropriate form.
	
	\begin{thm}\label{thm:periodictoperverse}
		Let $P$ and $Q$ be projective $A$-modules with no common direct summands such that $P \oplus Q$ is a projective generator of $A$. Let $E=\opcat{\End_A(P)}$ and $M=\Hom_A(P,Q)$. If there is an automorphism $\sigma$ of $E$, $\alpha \in \Ext^n_{E \otimes_k \opcat{E}}(E,{}_{\sigma\!}E)$ and $n \in \mathbb{Z}_+$ such that the $E$-module $M$ is strongly $\sigma$-periodic and the $\opcat{E}$-module $M^\vee$ is strongly $\sigma^{-1}$-periodic, both of period $n$ and relative to $\alpha$, then there is a standard derived equivalence \begin{tikzcd}[cramped, sep=small]
			\Phi_P: D^b(A) \arrow[r,"\sim"] & D^b(A),
		\end{tikzcd} perverse relative to
		\[
		0 \subset \mathcal{A}_1 \subset A\modcat,
		\] 
		where $\mathcal{A}_1$ is the Serre subcategory of $A\modcat$ prime to $P$.
	\end{thm}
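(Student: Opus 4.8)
The strategy is to build $\Phi_P$ by hand as a ``generalised periodic twist'' and then compute its effect on the indecomposable projective $A$-modules, reversing the construction used in the proof of Theorem \ref{thm:perversetoperiodic}.

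First, using $\Ext^n_{E\otimes_k\opcat{E}}(E,{}_{\sigma\!}E)\cong\Hom_{\bbimder{E}{E}}(E,{}_{\sigma\!}E[n])$, the class $\alpha$ determines a triangle $Y\to E\xrightarrow{\alpha}{}_{\sigma\!}E[n]\rightsquigarrow$ in $\bbimder{E}{E}$ as in Lemma \ref{lem:strongperiodic}. I set $Z=P\ltens_E Y\ltens_E P^\vee\in\bbimder{A}{A}$, let $Z\to A$ be the composite of $P\ltens_E(Y\to E)\ltens_E P^\vee$ with the evaluation map $P\ltens_E P^\vee\to A$, put $X=\cone(Z\to A)$, and define $\Phi_P=X\ltens_A-$. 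Here $P^\vee\ltens_A P\cong E$, so $P^\vee\ltens_A Z\ltens_A P\cong Y$, matching Theorem \ref{thm:perversetoperiodic}; one may if convenient first reduce to $A$ basic with $A\cong P\oplus Q$ via Proposition \ref{prop:Aisbasic1} and its right-module analogue.

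Next I compute $\Phi_P$ on indecomposable projectives. For a summand $P_i$ of $P$, $\Pi_i:=P^\vee\ltens_A P_i=\Hom_A(P,P_i)$ is an indecomposable projective $E$-module, so $\alpha\ltens_E\Pi_i\in\Ext^n_E(\Pi_i,{}_{\sigma\!}\Pi_i)=0$; hence $Y\ltens_E\Pi_i\cong\Pi_i\oplus{}_{\sigma\!}\Pi_i[n-1]$, and since $\sigma$ permutes the indecomposable projective $E$-modules (via a bijection $\overline{\sigma}$ of $I\setminus J$) and $P\ltens_E\Hom_A(P,-)\cong\id$ on $P\cadd$, one gets $Z\ltens_A P_i\cong P_i\oplus P_{\overline{\sigma}(i)}[n-1]$ and so $\Phi_P(P_i)\cong P_{\overline{\sigma}(i)}[n]$. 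For a summand $P_j$ of $Q$, $M_j:=\Hom_A(P,P_j)$ is a direct summand of $M$, so strong $\sigma$-periodicity of $M$ with Lemma \ref{lem:strongperiodic} makes $Y\ltens_E M_j$ perfect in $D^b(E)$; the long exact homology sequence of $Y\ltens_E M_j\to M_j\to{}_{\sigma\!}M_j[n]\rightsquigarrow$ shows its homology sits in degrees $0$ and $n-1$, so it is isomorphic to a bounded complex of projective $E$-modules in non-negative degrees. Applying $P\ltens_E-$ (which sends $E\proj$ into $P\cadd$) and then the cone construction, $\Phi_P(P_j)$ becomes a bounded complex of projective $A$-modules with $P_j$ in degree $0$ and modules in $P\cadd$ in the positive degrees; moreover, using the natural isomorphism $\Hom_A(P,P\ltens_E N)\cong N$ (valid for any $E$-module $N$ since $\Hom_A(P,-)$ is exact), a short computation with the counit $P\ltens_E M_j\to P_j$ identifies $H_t(\Phi_P(P_j))$ for $t\neq n$ with the kernel and cokernel of that counit, both of which lie in $\mathcal{A}_1$.

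These computations show $\{\Phi_P(P_i)\}_{i\notin J}\cup\{\Phi_P(P_j)\}_{j\in J}$ generates $K^b(A\proj)$, and the homology statements just made yield $\Hom_{D^b(A)}(\Phi_P(P_a),\Phi_P(P_b)[m])=0$ for $m\neq 0$ in every case except that of two summands of $Q$. Closing that last case --- i.e.\ proving $X$ is a two-sided tilting complex with $\opcat{\End_{D^b(A)}(X)}\cong A$, so that $\Phi_P$ is a standard derived equivalence --- is the main obstacle, and it is exactly here that strong $\sigma^{-1}$-periodicity of $M^\vee$ relative to the \emph{same} $\alpha$ is needed. I would supply the quasi-inverse by the mirror construction, forming $\widetilde{Z}$ and $\widetilde{X}=\cone(\widetilde{Z}\to A)$ from $Y$ acting on the right, and verify $\Phi_P\circ\Phi_P^{-1}\cong\Id_{D^b(A)}\cong\Phi_P^{-1}\circ\Phi_P$ by an octahedral chase, invoking Lemma \ref{lem:strongperiodic} for $M$ on one side and $M^\vee$ on the other to see that the ``error'' terms $Z\ltens_A\widetilde{Z}$ etc.\ are perfect and collapse; here one uses the identity $P^\vee\ltens_A X\cong{}_{\sigma\!}(P^\vee)[n]$, which falls straight out of the defining triangle with no equivalence assumed, together with its mirror.

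Finally, for $V\in D^b_{\mathcal{A}_1}(A)$ we have $P^\vee\ltens_A V=\Hom_A(P,V)=0$ since $\mathcal{A}_1$ is prime to $P$, so $Z\ltens_A V=0$ and $\Phi_P(V)\cong V$ naturally; thus $\Phi_P$ restricts to the identity on $D^b_{\mathcal{A}_1}(A)$. With $\Phi_P$ now known to be a derived equivalence, Proposition \ref{prop:perverseproj} applies with $\mathcal{P}_1$ the summands of $P$ (each sent to a single indecomposable projective concentrated in degree $n$, the bijection being $\overline{\sigma}$) and $\mathcal{P}_2\setminus\mathcal{P}_1$ the summands of $Q$ (each sent to a complex whose only summand outside $P\cadd$ is $P_j$ itself, in degree $0$); the shift exponents $n$ and $0$ correspond, under the reindexing of that proposition, to the perversity function $p(1)=0$, $p(2)=n$. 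Hence $\Phi_P$ is perverse relative to $0\subset_0\mathcal{A}_1\subset_n A\modcat$, a two-step perverse equivalence of width $n$, as required.
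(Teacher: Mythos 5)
Your construction of $\Phi_P$ is exactly the paper's (same triangle $\nabla$, same $Z=P\ltens_E Y\ltens_E P^\vee$, same $X$, and same reduction to the basic case), and your final perversity argument via Proposition \ref{prop:perverseproj} also matches. But the central part --- showing $\Phi_P$ is actually an equivalence --- is where your route diverges and, as written, leaves a real gap.

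You propose to verify the one-sided tilting conditions on the objects $\Phi_P(P_a)$ directly, and you correctly identify the stubborn case as $\Hom_{D^b(A)}(\Phi_P(P_j),\Phi_P(P_{j'})[m])$ for $P_j,P_{j'}$ summands of $Q$. Your plan for closing it --- build a mirror $\widetilde X$ from the right-module action, and show $X\ltens_A\widetilde X\cong A\cong\widetilde X\ltens_A X$ by an ``octahedral chase'' in which the error terms are ``perfect and collapse'' --- is only a sketch, and I don't see why it would go through: knowing that an object like $Z\ltens_A\widetilde Z$ is perfect does not by itself make the relevant composite vanish or the composite functor the identity; some genuine computation comparing it with $A$ is still needed. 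Even the splitting you invoke to get $\Phi_P(P_i)\cong P_{\overline\sigma(i)}[n]$ for $P_i$ a summand of $P$ is not automatic: the triangle $Z\ltens_A P_i\to P_i\to\Phi_P(P_i)$ has first term $P_i\oplus P_{\overline\sigma(i)}[n-1]$, but you still need to know the component $P_i\to P_i$ of that map is an isomorphism (and for $n=1$ there may also be a nonzero component from $P_{\overline\sigma(i)}$); the paper handles this by constructing an explicit isomorphism of triangles using the (co)unit of $-\ltens_E P^\vee\dashv -\ltens_A P$, which is precisely the content of Proposition \ref{prop:psionP}.

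The paper sidesteps your hard case entirely by a different strategy: it proves $X$ is perfect in $D^b(A)$ and $D^b(\opcat{A})$ (Lemma \ref{lem:Xisperfect} --- this is where strong periodicity of both $M$ and $M^\vee$ enters), so $X^\vee\ltens_A-$ is a two-sided adjoint to $\Phi_P$, and then applies Bridgeland's fully-faithfulness criterion (Theorem \ref{thm:bridgeland}) to the spanning class $\{P\}\cup P^\perp$ of Lemma \ref{lem:spanningclass}. On $P$ the functor acts as a shift (Proposition \ref{prop:psionP}); on $P^\perp$ it acts as the identity (Proposition \ref{prop:psionperp}); no Hom computation between images of summands of $Q$ is ever required. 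Essential surjectivity on $\Perf(A)$ is then a short thick-subcategory argument (Proposition \ref{prop:Phiequiv}). If you want to salvage your proposal, the cleanest fix is to replace your ``octahedral chase'' step with exactly this Bridgeland/spanning-class argument; the rest of your computation of $\Phi_P$ on projectives can then be repurposed verbatim for the perversity check.
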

	
	Our proof of Theorem \ref{thm:periodictoperverse} adapts the method of Grant \cite{grant_periodic} to work directly in the derived category, itself based on Ploog's simplified proof that (geometric) spherical twists are derived autoequivalences \cite{ploog_thesis}. We require the following definitions.
	
	\begin{defn}
		Let $\mathcal{S}$ be a collection of objects in a triangulated category $\mathcal{T}$. The \emph{right orthogonal complement} of $\mathcal{S}$ is
		\[
		\mathcal{S}^\perp = \{V \in \mathcal{T}: \Hom_{\mathcal{T}}(U,V[i]) = 0 \text{ for all } U \in \mathcal{S}, i \in \mathbb{Z}\}.
		\]
		The \emph{left orthogonal} complement $^{\perp\!}\mathcal{S}$ of $\mathcal{S}$ is defined similarly.
	\end{defn}
	
	By \cite[Corollary 3.2]{rickard_symmderived}, if $Z$ is a bounded complex of projective $A$-modules and $V$ is any object of $D^b(A)$, since $A$ is symmetric, $\Hom_{D^b(A)}(Z,V)$ and $\Hom_{D^b(A)}(V,Z)$ are naturally dual as $k$-vector spaces\footnote{That is, $Z$ is a 0-Calabi-Yau object in $D^b(A)$}. In such instances, the right and left orthogonal complements $Z^\perp$ and $^{\perp\!}Z$ coincide, and we may refer unambiguously to the \emph{orthogonal complement} $Z^\perp$ of $Z$. In particular, if $P$ is a projective $A$-module, the orthogonal complement $P^\perp$ is unambiguously defined. We comment that $P^\perp = D^b_{\mathcal{A}_1}(A)$, while the proof of Lemma \ref{lem:ZinthickP} shows that ${}^{\perp\!}D^b_{\mathcal{A}_1}(A) = \Perf(A) \cap \langle P \rangle$.
	
	\begin{defn}
		A collection of objects $\mathcal{S}$ in a triangulated category $\mathcal{T}$ is a \emph{spanning class} for $\mathcal{T}$ if for every $V \in \mathcal{T}$, if $\Hom_{\mathcal{T}}(U,V[i]) = 0$ for every $U \in \mathcal{S}$ and all $i \in \mathbb{Z}$, then $V \cong 0$, and if $\Hom_{\mathcal{T}}(V[i],U) = 0$ for every $U \in \mathcal{S}$ and all $i \in \mathbb{Z}$, then $V \cong 0$.
	\end{defn}
	
	The following lemma is \cite[Lemma 3.14]{grant_periodic}.
	
	\begin{lem}\label{lem:spanningclass}
		If $P$ is a projective $A$-module, then the collection of objects $\mathcal{S} = \{P\} \cup P^\perp$ is a spanning class for $D^b(A)$.
	\end{lem}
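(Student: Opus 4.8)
The plan is to reduce both halves of the spanning-class condition to a single computation: for a projective indecomposable $P_i$, the group $\Hom_{D^b(A)}(P_i,-[j])$ detects composition-factor multiplicities. Write $P=\bigoplus_{i\in I\setminus J}P_i^{m_i}$, so that $\mathcal{A}_1$ is the Serre subcategory generated by $\{S_j\}_{j\in J}$ and $P^\perp=D^b_{\mathcal{A}_1}(A)$, as recalled just before the statement. Since each $P_i$ is projective, $\Hom_A(P_i,-)$ is exact, so for any $V\in D^b(A)$ one has $\Hom_{D^b(A)}(P_i,V[j])\cong \Hom_A(P_i,H_j(V))\cong e_iH_j(V)$ (up to a shift in the homological index), a $k$-space of dimension $[H_j(V):S_i]$. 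I would first record the resulting equivalence: $\Hom_{D^b(A)}(P,V[j])=0$ for all $j\in\mathbb{Z}$ if and only if no $S_i$ with $i\in I\setminus J$ is a composition factor of any homology module of $V$, which is precisely the condition $V\in D^b_{\mathcal{A}_1}(A)=P^\perp$.

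With this in hand, the first half is immediate. Suppose $\Hom_{D^b(A)}(U,V[j])=0$ for every $U\in\mathcal{S}=\{P\}\cup P^\perp$ and every $j$. Taking $U=P$ and invoking the equivalence above puts $V$ in $P^\perp$, so $V$ is itself an object of $\mathcal{S}$; taking $U=V$ at $j=0$ then forces $\id_V=0$ in $\Hom_{D^b(A)}(V,V)$, whence $V\cong0$.

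For the second half I would use that $A$ is symmetric: every object of $\Perf(A)$, in particular $P$, is $0$-Calabi--Yau, so by \cite[Corollary 3.2]{rickard_symmderived} the space $\Hom_{D^b(A)}(V[j],P)$ is naturally dual to $\Hom_{D^b(A)}(P,V[-j])$. Hence if $\Hom_{D^b(A)}(V[j],U)=0$ for all $U\in\mathcal{S}$ and all $j$, then in particular $\Hom_{D^b(A)}(P,V[j])=0$ for all $j$, so once more $V\in P^\perp\subseteq\mathcal{S}$; taking $U=V$ at $j=0$ gives $\id_V=0$, so $V\cong0$.

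The argument is short, and the only real subtlety — the ``main obstacle'', such as it is — is the bookkeeping in the first step: one must be sure that vanishing of $\Hom_{D^b(A)}(P,V[j])$ over \emph{all} shifts genuinely forces every homology module of $V$ to be supported on $\{S_j\}_{j\in J}$, rather than merely that $S_i$ is absent from, say, the head or socle of the homology. This is exactly why one works with the projective covers $P_i$ and the identity $\dim_k\Hom_A(P_i,N)=[N:S_i]$, rather than with the simple modules directly. Everything else — exactness of $\Hom_A(P_i,-)$, the identification $P^\perp=D^b_{\mathcal{A}_1}(A)$, and the $0$-Calabi--Yau duality — is already available in the excerpt.
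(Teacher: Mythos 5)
Your proof is correct. The paper does not actually prove this lemma---it simply cites \cite[Lemma~3.14]{grant_periodic}---so there is no in-paper argument to compare against, but your reasoning is exactly the standard one and almost certainly mirrors Grant's: projectivity of each $P_i$ gives $\Hom_{D^b(A)}(P_i,V[j])\cong\Hom_A(P_i,H_j(V))$, whose dimension is $[H_j(V):S_i]$, so vanishing over all $j$ pins $V$ into $D^b_{\mathcal{A}_1}(A)=P^\perp$; and the $0$-Calabi--Yau property of perfect complexes over a symmetric algebra transports the second spanning condition back to the first. One trivial slip: the duality should read $\Hom_{D^b(A)}(V[j],P)\cong\Hom_{D^b(A)}(P,V[j])^\ast$ (same shift on both sides, by applying the natural duality of \cite[Corollary~3.2]{rickard_symmderived} directly to $V[j]$), not $\Hom_{D^b(A)}(P,V[-j])^\ast$; but since you quantify over all $j\in\mathbb{Z}$ anyway, this has no effect on the argument.
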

	
	Suppose the conditions of Theorem \ref{thm:periodictoperverse} hold. We now identify our functor \begin{tikzcd}[cramped, sep=small]\Phi: D^b(A) \arrow[r] & D^b(A).\end{tikzcd}
	
	The extension $\alpha \in \Ext_{E \otimes_k \opcat{E}}(E, {}_{\sigma\!}E)$ gives rise to a triangle
	\begin{center}
		\begin{tikzcd}[sep=small]
			Y \arrow[r,"f"] & E \arrow[r,"\alpha"] & {}_{\sigma\!}E[n] \arrow[r,rightsquigarrow] & {}
		\end{tikzcd}
	\end{center}
	in $\bbimder{E}{E}$, say $\nabla$. We have a chain of isomorphims
	\begin{align*}
		\Hom_{\bbimder{E}{E}}(Y,E) &\cong \Hom_{\bbimder{E}{E}}(Y, \rhom_A(P,P)) \\
		&\cong \Hom_{\bbimder{A}{E}}(P \ltens_E Y, P) \\
		&\cong \Hom_{\bbimder{A}{E}}(P \ltens_E Y, \rhom_{\opcat{A}}(P^\vee, A)) \\
		&\cong \Hom_{\bbimder{A}{A}}(P \ltens_E Y \ltens_E P^\vee, A)
	\end{align*}
	given by tensor-Hom adjunction. Let $g: P \ltens_E Y \ltens_E P^\vee \to A$ be the image of $f:Y \to E$ under this chain of isomorphisms. As in \cite[Lemma 3.4]{grant_periodic} we can characterise the map $g$ as the resulting map in the commutative diagram
	\begin{center}
		\begin{tikzcd}
			P \ltens_E Y \ltens_E P^\vee \arrow[d, "P \ltens_E f \ltens_E P^\vee"] \arrow[r,dashed,"g"] & A \\
			P \ltens_E E \ltens_E P^\vee \arrow[r,"\sim"] & P \ltens_E P^\vee \arrow[u,"\ve^R_A"]
		\end{tikzcd}
	\end{center}
	where $\ve^R$ is the counit of the adjunction $- \ltens_E P^\vee \dashv - \ltens_A P$. We note that $\ve^R_A$ is the usual evaluation map $P \ltens_E P^\vee \to A$. This in turn gives rise to a triangle
	\begin{center}
		\begin{tikzcd}[sep=small]
			P \ltens_E Y \ltens_E P^\vee \arrow[r,"g"] & A \arrow[r] & X \arrow[r, rightsquigarrow] & {}
		\end{tikzcd}
	\end{center}
	in $\bbimder{A}{A}$, say $\Delta$.
	
	\begin{defn}
		The functor \begin{tikzcd}[cramped, sep=small]
			\Phi_P = X \ltens_A -: D^b(A) \arrow[r] & D^b(A)
		\end{tikzcd} is the \emph{generalised periodic twist} of $A$ at $P$.
	\end{defn}
	
	Our task is to show that the generalised periodic twist $\Phi_P$ is an equivalence. We first show that we may again reduce to the case that $P$ and $Q$ are direct sums of projective indecomposable modules, no two of which are isomorphic.
	
	\begin{prop}\label{prop:Aisbasic2}
		Let $J \subset I$ such that $P = \bigoplus_{i \in I \setminus J} P_i^{m_i}$ and $Q = \bigoplus_{j \in J} P_j^{m_j}$ for integers $m_j, m_i \ge 1$. Let $P' = \bigoplus_{i \in I \setminus J} P_i$ and $Q' = \bigoplus_{j \in J} P_j$, $E'= \opcat{\End_A(P')}$ and $M'=\Hom_A(P',Q')$. Then the generalised periodic twists of $A$ at $P$ and at $P'$ coincide, $\Phi_P \cong \Phi_{P'}$.
	\end{prop}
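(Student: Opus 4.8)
The strategy is to produce an isomorphism $X \cong X'$ in $\bbimder{A}{A}$ between the two complexes defining $\Phi_P = X \ltens_A -$ and $\Phi_{P'} = X' \ltens_A -$ (using for $\Phi_{P'}$ the periodicity data for $E'$ obtained by transport of structure from that of $E$, as in the proof of Proposition \ref{prop:Aisbasic1}); the asserted isomorphism of functors is then immediate. Recall from that proof the $E$-$E'$-bimodule $V = \Hom_A(P,P') \cong P^\vee \otimes_A P'$, projective on each side, which with $V^\vee$ forms a Morita context (so $V \otimes_{E'} V^\vee \cong E$ and $V^\vee \otimes_E V \cong E'$), together with canonical bimodule isomorphisms $P \otimes_E V \cong P'$, $P' \otimes_{E'} V^\vee \cong P$, $V \otimes_{E'} (P')^\vee \cong P^\vee$ and $V^\vee \otimes_E P^\vee \cong (P')^\vee$. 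Recall also that one may take $Y' = V^\vee \otimes_E Y \otimes_E V$ and $f' = V^\vee \otimes_E f \otimes_E V$, so that the triangle in $\bbimder{E'}{E'}$ used to build $\Phi_{P'}$ is the image of $\nabla$ under the exact equivalence $V^\vee \otimes_E - \otimes_E V$.

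The canonical isomorphisms above yield an isomorphism of functors $(P' \ltens_{E'} - \ltens_{E'} (P')^\vee) \circ (V^\vee \otimes_E - \otimes_E V) \cong P \ltens_E - \ltens_E P^\vee$ from $\bbimder{E}{E}$ to $\bbimder{A}{A}$. Applying this to the triangle $Y \xrightarrow{f} E \xrightarrow{\alpha} {}_{\sigma\!}E[n]$ shows that $P' \ltens_{E'} f' \ltens_{E'} (P')^\vee$ is carried, compatibly with the canonical identifications $P' \ltens_{E'} Y' \ltens_{E'} (P')^\vee \cong P \ltens_E Y \ltens_E P^\vee$ and $P' \ltens_{E'} E' \ltens_{E'} (P')^\vee \cong P \ltens_E E \ltens_E P^\vee$, to $P \ltens_E f \ltens_E P^\vee$. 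Now recall that $g$ is the composite of $P \ltens_E f \ltens_E P^\vee$ with the canonical isomorphism $P \ltens_E E \ltens_E P^\vee \xrightarrow{\sim} P \ltens_E P^\vee$ followed by the evaluation map $\ve^R_A : P \ltens_E P^\vee \to A$, and that $g'$ is the analogous composite for $P'$. Hence, to conclude $g \cong g'$ under the isomorphism of sources, it remains only to verify that the induced isomorphism $P' \ltens_{E'} (P')^\vee \cong P \ltens_E P^\vee$ intertwines the two evaluation maps $\ve^R_A$. Granting this, $X = \cone(g) \cong \cone(g') = X'$ in $\bbimder{A}{A}$, and therefore $\Phi_P \cong \Phi_{P'}$.

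The one step that is not purely formal is this compatibility of evaluation maps: it holds because both $\ve^R_A$'s are the evaluation pairings and the isomorphism $P' \ltens_{E'} (P')^\vee \cong P \ltens_E P^\vee$ is assembled from the canonical identifications $P' \ltens_{E'} V^\vee \cong P$, $V \ltens_{E'} (P')^\vee \cong P^\vee$ and $V^\vee \otimes_E V \cong E'$, so that unwinding these and checking they commute with evaluation is routine but slightly tedious, and is the main (indeed the only) thing to check. Conceptually it records that the evaluation pairing between the additive category $P\cadd = P'\cadd$ and its $k$-linear dual is intrinsic, independent of which progenerator is used to realise it as a module category. All other isomorphisms used are the standard cancellations in the Morita context $(V,V^\vee)$, which are functorial and hence automatically respect the maps $f$, $f'$, the counit maps defining $g$, $g'$, and the formation of mapping cones.
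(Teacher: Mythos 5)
Your proposal is correct and takes essentially the same route as the paper: both transport the periodicity data from $E$ to $E'$ via the Morita bimodule $V$, identify $P' \ltens_{E'} Y' \ltens_{E'} (P')^\vee$ with $P \ltens_E Y \ltens_E P^\vee$, and then use the completion axiom and the five lemma for triangulated categories to deduce $X \cong X'$ in $\bbimder{A}{A}$. You go slightly further than the paper in explicitly flagging and justifying the commutativity of the left square (i.e.\ that the canonical identification intertwines $g$ and $g'$ via the evaluation maps $\ve^R_A$), which the paper leaves implicit when invoking the completion axiom; that is a genuine improvement in rigour.
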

	
	\begin{proof}
		By construction, $P'$ and $Q'$ have no common direct summands and $P' \oplus Q'$ is a projective generator of $A$. By Proposition \ref{prop:Aisbasic1}, with $E' = \opcat{\End_A(P')}$ and $M'=\Hom_A(P',Q')$, the $E'$-module $M'$ and the $\opcat{(E')}$-module $(M')^\vee$ are strongly $\sigma'$-periodic relative to $\alpha'$, where $\sigma'$ and $\alpha'$ are the restrictions of $\sigma$ and $\alpha$ respectively to $E'$. The generalised periodic twist $\Phi_{P'}$ therefore exists as constructed.
		
		Recall from the proof of Proposition \ref{prop:Aisbasic1}, there is an $E$-$E'$-bimodule $V$ such that $P' \otimes_{E'} V^\vee \cong P$ and $V \otimes_{E'} (P')^\vee \cong P^\vee$, and applying the functor $V^\vee \ltens_E - \ltens_E V$ to $\nabla$, we have a triangle
		\begin{center}
			\begin{tikzcd}[sep=small]
				Y' \arrow[r,"f"] & E' \arrow[r,"\alpha'"] & {}_{\sigma'\!}E'[n] \arrow[r,rightsquigarrow] & {}
			\end{tikzcd}
		\end{center}
		in $\bbimder{E'}{E'}$, with $Y' \cong V^\vee \ltens_E Y \ltens_E V$. Observe then that 
		\begin{align*}
			P' \ltens_{E'} Y' \ltens_{E'} (P')^\vee &\cong P' \ltens_{E'} (V^\vee \otimes_E Y \otimes_E V) \ltens_{E'} (P')^\vee \\
			& \cong P \ltens_E Y \ltens_E P^\vee.
		\end{align*}
		By the completion axiom for triangulated categories, we have a morphism of triangles
		\begin{center}
			\begin{tikzcd}[column sep=small]
				P' \ltens_{E'} Y' \ltens_{E'} (P')^\vee \arrow[r] \arrow[d,"\sim"] & A \arrow[r] \arrow[d,equal] & X' \arrow[r,rightsquigarrow] \arrow[d,dashed] & {} \\
				P \ltens_E Y \ltens_E P^\vee \arrow[r] & A \arrow[r] & X \arrow[r,rightsquigarrow] & {}
			\end{tikzcd}
		\end{center}
		in $\bbimder{A}{A}$, and by the 5-Lemma for triangulated categories, this third arrow is an isomorphism. Thus, $\Phi_P = X \ltens_A - \cong X' \ltens_A - = \Phi_{P'}$, and this completes the proof.
	\end{proof}
	
	In particular, we may assume that $A$ is basic and that $A \cong P \oplus Q$ as $A$-modules.
	
	We now work towards demonstrating that $\Phi_P$ is an equivalence. Recall that a functor is an equivalence if and only if it is fully faithful and essentially surjective. The following theorem of Bridgeland \cite[Theorem 2.3]{bridgeland_triangulated} will be useful.
	
	\begin{thm}\label{thm:bridgeland}
		Let $\mathcal{T}$, $\mathcal{T}'$ be triangulated categories and \begin{tikzcd}[cramped, sep=small]F:\mathcal{T} \arrow[r] & \mathcal{T}'\end{tikzcd} a triangulated functor with a left and a right adjoint. Then $F$ is fully faithful if and only if there is a spanning class $\mathcal{S}$ for $\mathcal{T}$ such that the homomorphisms
		\begin{center}
			\begin{tikzcd}[sep=small]
				\Hom_{\mathcal{T}}(U,V[i]) \arrow[r] & \Hom_{\mathcal{T}'}(F(U),F(V[i]))
			\end{tikzcd}
		\end{center}
		are bijective for every $U,V$ in $\mathcal{S}$ and $i \in \mathbb{Z}$.
	\end{thm}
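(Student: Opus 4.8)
The plan is to prove the two implications separately; the forward one is immediate, so all the work lies in the converse. If $F$ is fully faithful, then the class $\mathcal{S}$ of all objects of $\mathcal{T}$ is trivially a spanning class, and the maps $\Hom_{\mathcal{T}}(U,V[i]) \to \Hom_{\mathcal{T}'}(F(U),F(V[i]))$ are bijective for every pair $U,V$ by full faithfulness; so the stated condition holds with this choice of $\mathcal{S}$.

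For the converse, write $F^L$ and $F^R$ for the given left and right adjoints of $F$, let $\eta\colon \Id_{\mathcal{T}} \to F^R F$ be the unit of $F \dashv F^R$, and let $\varepsilon\colon F^L F \to \Id_{\mathcal{T}}$ be the counit of $F^L \dashv F$. Since a right adjoint is fully faithful exactly when its counit is a natural isomorphism, and $F$ is the right adjoint in $F^L \dashv F$, it suffices to show that $\varepsilon_W$ is an isomorphism for every object $W$ of $\mathcal{T}$. As a first step I would show that $\eta_U$ is an isomorphism for $U\in\mathcal{S}$: complete $\eta_U$ to a triangle $U \to F^R F U \to D_U \to U[1]$ and apply $\Hom_{\mathcal{T}}(V,-)$ for $V\in\mathcal{S}$. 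The adjunction $F\dashv F^R$ identifies $\Hom_{\mathcal{T}}(V, F^R F U[i])$ with $\Hom_{\mathcal{T}'}(FV, FU[i])$, and the triangle identities turn the map induced by $\eta_U$ into the map induced by $F$, which is bijective for all $i$ by hypothesis. The long exact sequence then forces $\Hom_{\mathcal{T}}(V, D_U[i]) = 0$ for all $V\in\mathcal{S}$ and all $i$, so $D_U\cong 0$ by the first spanning condition, and $\eta_U$ is an isomorphism.

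For arbitrary $W$, complete $\varepsilon_W$ to a triangle $F^L F W \to W \to C_W \to F^L F W[1]$ and apply $\Hom_{\mathcal{T}}(-,U[i])$ for $U\in\mathcal{S}$. Chaining the adjunction isomorphisms for $F^L\dashv F$ and $F\dashv F^R$ and then using the first step ($\eta_U$ an isomorphism) gives
\[
\Hom_{\mathcal{T}}(F^L F W, U[i]) \;\cong\; \Hom_{\mathcal{T}'}(F W, F U[i]) \;\cong\; \Hom_{\mathcal{T}}(W, F^R F U[i]) \;\cong\; \Hom_{\mathcal{T}}(W, U[i]),
\]
and one checks, via naturality of $\varepsilon$, that the composite is precomposition with $\varepsilon_W$. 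Hence that map is bijective for all $i$, the long exact sequence yields $\Hom_{\mathcal{T}}(C_W, U[i]) = 0$ for all $U\in\mathcal{S}$ and all $i$, and $C_W\cong 0$ by the second spanning condition. Therefore $\varepsilon$ is a natural isomorphism and $F$ is fully faithful.

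The obstacle I anticipate is bookkeeping rather than conceptual depth: the real labour is in verifying, using the triangle identities and naturality of $\eta$ and $\varepsilon$, that the connecting maps in the two long exact sequences really do coincide with the $F$-induced maps (first step) and with $\varepsilon_W^{*}$ (second step), so that the hypothesis genuinely applies; and one must be careful to invoke the correct one of the two orthogonality conditions in the definition of spanning class at each stage (the first to kill $D_U$, the second to kill $C_W$). It is worth emphasising that both adjoints are needed: $F^R$ drives the first step, while the second step requires $F^L$ together with the conclusion of the first.
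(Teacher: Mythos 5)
The paper does not prove this statement; it cites it directly as Bridgeland's Theorem 2.3 from \cite{bridgeland_triangulated}, so there is no in-paper proof to compare against. Your argument is correct and is essentially Bridgeland's own: show the unit of $F\dashv F^R$ is invertible on $\mathcal{S}$ via the first orthogonality condition, then bootstrap to show the counit of $F^L\dashv F$ is a natural isomorphism via the second, identifying the relevant Hom-maps through the triangle identities and naturality. The one point you flagged as ``bookkeeping'' and should be kept in mind if writing this out in full is that in the second step you need $\eta_{U[i]}$, not just $\eta_U$, to be invertible; this follows because $\eta$ is compatible with shifts (equivalently, one can rerun the first step with $U[i]$ in place of $U$, since the hypothesis is stated for all integer shifts), but it is worth saying explicitly.
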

	
	Our functor $\Phi_P$ satisfies the first clause of this theorem. 
	
	\begin{lem}\label{lem:Xisperfect}
		The object $X$ is perfect in $D^b(A)$ and $D^b(\opcat{A})$.
	\end{lem}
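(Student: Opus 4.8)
The plan is to use the defining triangle $\Delta$ of $X$, namely $P\ltens_E Y\ltens_E P^\vee\to A\to X\rightsquigarrow$ in $\bbimder{A}{A}$, together with the triangle $\nabla$ of $Y$ and Lemma~\ref{lem:strongperiodic}. Since the regular bimodule $A$ is perfect in both $D^b(A)$ and $D^b(\opcat{A})$, it suffices to show that $C:=P\ltens_E Y\ltens_E P^\vee$ is perfect in $D^b(A)$ and in $D^b(\opcat{A})$. As a preliminary I would record that $Y$ is perfect in both $D^b(E)$ and $D^b(\opcat{E})$: the triangle $\nabla$ reads $Y\to E\to {}_{\sigma\!}E[n]\rightsquigarrow$, and since $\sigma$ is an automorphism, ${}_{\sigma\!}E$ is free of rank one as a left $E$-module and (trivially) as a right $E$-module, so $E$ and ${}_{\sigma\!}E$ — hence $Y$ — lie in $\Perf(E)\cap\Perf(\opcat{E})$.

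For perfection in $D^b(A)$ I would group $C=P\ltens_E(Y\ltens_E P^\vee)$ and first show $Y\ltens_E P^\vee$ is perfect in $D^b(E)$, regarded as a complex of left $E$-modules. Assuming, via Proposition~\ref{prop:Aisbasic2}, that $A\cong P\oplus Q$ as left $A$-modules, applying $\Hom_A(P,-)$ to this decomposition gives an isomorphism of left $E$-modules $P^\vee\cong E\oplus M$ (with $\Hom_A(P,P)\cong E$). Hence $Y\ltens_E P^\vee\cong Y\oplus(Y\ltens_E M)$ in $D^b(E)$; the first summand is perfect by the preliminary, the second by Lemma~\ref{lem:strongperiodic} together with the hypothesis that $M$ is strongly $\sigma$-periodic relative to $\alpha$. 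Finally, the triangulated functor $P\ltens_E(-)$ carries the regular module $E$ to the projective $A$-module $P\in\Perf(A)$, hence carries $\Perf(E)$ into $\Perf(A)$; applying it to $Y\ltens_E P^\vee$ shows $C\in\Perf(A)$, and then $\Delta$ gives $X\in\Perf(A)$.

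The case of $D^b(\opcat{A})$ is the mirror image, but it is worth saying which structures one uses. Here I would group $C=(P\ltens_E Y)\ltens_E P^\vee$ and show $P\ltens_E Y$ is perfect in $D^b(\opcat{E})$ as a complex of right $E$-modules. Using $A\cong P\oplus Q$ as left $A$-modules once more, $\Hom_A(A,P)\cong P$ as right $E$-modules decomposes as $\Hom_A(P,P)\oplus\Hom_A(Q,P)$; with $\Hom_A(P,P)\cong E$ as a right $E$-module and, using that $A$ is symmetric, $\Hom_A(Q,P)\cong M^\vee$ (as in the proof of Proposition~\ref{prop:grantperiodicmodule}), this gives $P\cong E\oplus M^\vee$ as right $E$-modules. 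Hence $P\ltens_E Y\cong Y\oplus(M^\vee\ltens_E Y)$ in $D^b(\opcat{E})$, perfect by the preliminary and by Lemma~\ref{lem:strongperiodic} (using that $M^\vee$ is strongly $\sigma^{-1}$-periodic relative to $\alpha$). Since $(-)\ltens_E P^\vee$ carries $E$ to the projective right $A$-module $P^\vee$, it carries $\Perf(\opcat{E})$ into $\Perf(\opcat{A})$; applying it gives $C\in\Perf(\opcat{A})$ and hence $X\in\Perf(\opcat{A})$.

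The only genuinely delicate point is the bookkeeping: one must keep straight which of the two $E$-module structures on $P$ and on $P^\vee$ is being tensored at each stage, so that the direct-sum decompositions above are of the correct type and the resulting complexes carry the module structure needed for the next step. Everything else is formal — Lemma~\ref{lem:strongperiodic} converts the strong periodicity of $M$ and of $M^\vee$ into the two perfection statements, and a triangulated functor sending the regular module into $\Perf$ automatically sends all of $\Perf$ into $\Perf$. The automorphism hypothesis on $\sigma$ enters exactly once, in the preliminary step ensuring $Y$ is perfect on both sides.
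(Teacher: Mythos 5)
Your argument is correct and follows the same route as the paper's: use the triangle $\Delta$ to reduce to the perfection of $P\ltens_E Y\ltens_E P^\vee$, decompose $P^\vee\cong E\oplus M$ as a left $E$-module and $P\cong E\oplus M^\vee$ as a right $E$-module, invoke Lemma~\ref{lem:strongperiodic} and the triangle $\nabla$ for perfection of $Y$, $Y\ltens_E M$, and $M^\vee\ltens_E Y$, and then apply $P\ltens_E(-)$ (resp.\ $(-)\ltens_E P^\vee$), which preserves perfection since $P$ (resp.\ $P^\vee$) is projective over $A$ (resp.\ $\opcat{A}$). Your grouping of the triple tensor product is a cosmetic re-ordering of the paper's argument, not a different method.
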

	
	\begin{proof}
		Consider the triangle $\Delta$,
		\begin{center}
			\begin{tikzcd}[sep=small]
				P \ltens_E Y \ltens_E P^\vee \arrow[r] & A \arrow[r] & X \arrow[r, rightsquigarrow] & {}.
			\end{tikzcd}
		\end{center}
		The $A$-$A$-bimodule $A$ is projective as an $A$-module and as an $\opcat{A}$-module. We have that
		\[
		P^\vee = \Hom_A(P,A) \cong \Hom_A(P,P\oplus Q) \cong E \oplus M
		\]
		as an $E$-module, so that $Y \ltens_E P^\vee \cong Y \oplus Y \ltens_E M$. Similarly, 
		\[
		P \cong \Hom_A(A,P) \cong \Hom_A(P \oplus Q, P) \cong E \oplus M^\vee,
		\]
		so that $P \ltens_E Y \cong Y \oplus M^\vee \ltens_E Y$. By assumption and Lemma \ref{lem:strongperiodic}, $Y \ltens_E M$ is perfect in $D^b(E)$, and $M^\vee \ltens_E Y$ is perfect in $D^b(\opcat{E})$. The triangle $\nabla$,
		\begin{center}
			\begin{tikzcd}[sep=small]
				Y \arrow[r] & E \arrow[r] & {}_{\sigma\!}E[n] \arrow[r,rightsquigarrow] & {}
			\end{tikzcd}
		\end{center}
		informs us that $Y$ is a perfect object in $D^b(E)$ and $D^b(\opcat{E})$. Then $P \ltens_E Y \ltens_E P^\vee \cong P \ltens_E Y \oplus P \ltens_E Y \ltens_E M$ in $D^b(A)$. Since $P$ is a projective $A$-module, the functor \begin{tikzcd}[cramped, sep=small]
			P \ltens_E - : D^b(E) \arrow[r] & D^b(A)
		\end{tikzcd} sends perfect objects to perfect objects, so since $Y$ and $Y \ltens_E M$ are perfect objects in $D^b(E)$, $P \ltens_E Y \ltens_E P^\vee$ is perfect in $D^b(A)$. Similarly, since $P^\vee$ is a projective $\opcat{A}$-module, $P \ltens_E Y \ltens_E P^\vee \cong Y \ltens_E P^\vee \oplus M^\vee \ltens_E Y \ltens_E P^\vee$, and $Y$ and $M^\vee \ltens_E Y$ are perfect objects of $D^b(\opcat{E})$, $P \ltens_E Y \ltens_E P^\vee$ is perfect in $D^b(\opcat{A})$. Thus, $X$ fits into the triangle $\Delta$ with two objects perfect in $D^b(A)$ and in $D^b(\opcat{A})$, so $X$ must be, too.
	\end{proof}
	
	Thus, the functor \begin{tikzcd}[cramped, sep=small]
		X^\vee \ltens_A -: D^b(A) \arrow[r] & D^b(A)
	\end{tikzcd} is both left and right adjoint to $\Phi_P$. In order to apply Theorem \ref{thm:bridgeland}, we now investigate how $\Phi_P$ acts on the spanning class $\mathcal{S} = \{P\} \cup P^\perp$.
	
	\begin{prop}\label{prop:psionperp}
		For any $V$ in $P^\perp$, $\Phi_P(V) \cong V$.
	\end{prop}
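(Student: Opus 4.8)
The plan is to apply the derived tensor functor $- \ltens_A V$ to the defining triangle $\Delta$ of $X$,
\begin{center}
	\begin{tikzcd}[sep=small]
		P \ltens_E Y \ltens_E P^\vee \arrow[r,"g"] & A \arrow[r] & X \arrow[r,rightsquigarrow] & {},
	\end{tikzcd}
\end{center}
and to exploit that, for $V \in P^\perp$, the leftmost term becomes acyclic. Indeed, only the \emph{shape} of the triangle $\Delta$ will matter; the particular form of $g$, and the distinction between the periodicity hypotheses on $M$ and on $M^\vee$, play no role in this statement.

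First I would record the identification $P^\vee \ltens_A V \cong \rhom_A(P,V)$. Since $P$ is a finitely generated projective $A$-module, $P^\vee = \Hom_A(P,A)$ is a projective right $A$-module, and there is a natural isomorphism of functors $P^\vee \otimes_A (-) \cong \Hom_A(P,-)$ on $A\modcat$; as $P$ is projective this functor is exact, so passing to the derived category yields $P^\vee \ltens_A V \cong \rhom_A(P,V)$ for every $V \in D^b(A)$. Now the hypothesis $V \in P^\perp$ says precisely that $\Hom_{D^b(A)}(P, V[i]) = 0$ for all $i \in \mathbb{Z}$, that is, $\rhom_A(P,V)$ is acyclic; hence $P^\vee \ltens_A V \cong 0$. (Equivalently, one could invoke the identification $P^\perp = D^b_{\mathcal{A}_1}(A)$ noted above, together with the vanishing $\Hom_A(P,W) = 0$ for modules $W$ whose composition factors are prime to $P$.)

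Next I would apply the triangulated functor $- \ltens_A V : \bbimder{A}{A} \to D^b(A)$ to $\Delta$, obtaining a triangle
\begin{center}
	\begin{tikzcd}[sep=small]
		P \ltens_E Y \ltens_E P^\vee \ltens_A V \arrow[r] & A \ltens_A V \arrow[r] & X \ltens_A V \arrow[r,rightsquigarrow] & {}
	\end{tikzcd}
\end{center}
in $D^b(A)$. By associativity of the derived tensor product the left-hand term is $P \ltens_E Y \ltens_E (P^\vee \ltens_A V) \cong P \ltens_E Y \ltens_E 0 \cong 0$, using the vanishing just established. Therefore the middle arrow is an isomorphism $V \cong A \ltens_A V \xrightarrow{\ \sim\ } X \ltens_A V = \Phi_P(V)$, which is the assertion.

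There is essentially no obstacle here once the triangle $\Delta$ is available: the argument is a two-line diagram chase. The only points requiring a modicum of care are the functorial identification $P^\vee \ltens_A V \cong \rhom_A(P,V)$ for $P$ finitely generated projective, and the (already recorded) observation that $P^\perp$ consists exactly of the objects $V$ with $\rhom_A(P,V) \cong 0$, which is what makes the outer term of the rotated triangle disappear.
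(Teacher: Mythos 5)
Your argument is essentially identical to the paper's: both apply $- \ltens_A V$ to the triangle $\Delta$, identify $P^\vee \ltens_A V$ with the (derived) hom complex $\rhom_A(P,V) \cong \Hom_A(P,V)$ using that $P$ is finitely generated projective, and invoke $V \in P^\perp$ to see this complex is acyclic, so that the middle arrow of the rotated triangle becomes the isomorphism $V \cong \Phi_P(V)$. The only cosmetic difference is that the paper spells out the vanishing via $H_i(\Hom_A(P,V)) \cong \Hom_{K^b(A)}(P,V[i]) \cong \Hom_{D^b(A)}(P,V[i])$, whereas you phrase it directly as acyclicity of $\rhom_A(P,V)$.
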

	
	\begin{proof}
		Consider the triangle $\Delta \ltens_A V$ in $D^b(A)$,
		\begin{center}
			\begin{tikzcd}[sep=small]
				P \ltens_E Y \ltens_E P^\vee \ltens_A V \arrow[r] & A \ltens_A V \arrow[r] & X \ltens_A V \arrow[r, rightsquigarrow] & {}.
			\end{tikzcd}
		\end{center}
		Clearly, $A\ltens_A V \cong V$. We have $P^\vee \ltens_A V \cong \Hom_A(P,V)$; as $P$ is a projective $A$-module, we need not derive these functors. Also since $P$ is projective, we have $\Hom_{K^b(A)}(P,V) \cong \Hom_{D^b(A)}(P,V)$, so that the homology of the complex $\Hom_A(P,V)$ is given by
		\[
		H_i(\Hom_A(P,V)) \cong \Hom_{K^b(A)}(P,V[i]) \cong \Hom_{D^b(A)}(P,V[i])
		\]
		for every $i \in \mathbb{Z}$. But $V \in P^\perp$, so $H_i(\Hom_A(P,V)) = 0$ for every $i$. Thus, $P^\vee \ltens_A V \cong \Hom_A(P,V) \cong 0$ in $D^b(A)$. The triangle $\Delta \ltens_A V$ is therefore isomorphic to the triangle
		\begin{center}
			\begin{tikzcd}[sep=small]
				0 \arrow[r] & V \arrow[r] & X \ltens_A V \arrow[r, rightsquigarrow] & {}.
			\end{tikzcd}
		\end{center}
		Thus, $\Phi_P(V) = X \ltens_A V \cong V$. 
	\end{proof}
	
	\begin{prop}\label{prop:psionP}
		We have $\Phi_P(P) \cong P[n]$.
	\end{prop}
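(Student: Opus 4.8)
The plan is to apply the functor $-\ltens_A P$ to the defining triangle $\Delta$ for $X$ and to recognise the outcome as a shift and twist of the triangle $\nabla$ pushed forward along $P\ltens_E-$. Since $A$ is the regular bimodule, $A\ltens_A P\cong P$. For the left-hand term of $\Delta\ltens_A P$, the ``dual basis'' isomorphism for the finitely generated projective $A$-module $P$ gives a natural isomorphism $P^\vee\ltens_A P\cong E$ of $E$-$E$-bimodules (no derivation is needed, as $P$ is projective over $A$), whence
\[
(P\ltens_E Y\ltens_E P^\vee)\ltens_A P\;\cong\;P\ltens_E Y\ltens_E(P^\vee\ltens_A P)\;\cong\;P\ltens_E Y\ltens_E E\;\cong\;P\ltens_E Y .
\]
Applying the exact functor $-\ltens_A P$ to $\Delta$ therefore yields a triangle of the form $P\ltens_E Y\to P\to X\ltens_A P\rightsquigarrow$ in $D^b(A)$, so that $\Phi_P(P)=X\ltens_A P$ is the cone of its first map.

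Next I would identify that first map. Recall $g$ is the composite $P\ltens_E Y\ltens_E P^\vee\xrightarrow{P\ltens_E f\ltens_E P^\vee}P\ltens_E E\ltens_E P^\vee\cong P\ltens_E P^\vee\xrightarrow{\ve^R_A}A$. Applying $-\ltens_A P$ and using, on one hand, naturality of the identification $P^\vee\ltens_A P\cong E$, and on the other hand the zigzag identity for the adjunction $-\ltens_E P^\vee\dashv-\ltens_A P$ (which identifies $\ve^R_A\ltens_A P$ with the canonical isomorphism $P\ltens_E P^\vee\ltens_A P\cong P$), one sees that under the identifications above $g\ltens_A P$ becomes the map $P\ltens_E f\colon P\ltens_E Y\to P\ltens_E E\cong P$. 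Since $P\ltens_E-\colon D^b(E)\to D^b(A)$ is a triangulated functor, applying it to $\nabla$ gives
\[
\Phi_P(P)\;\cong\;\cone(g\ltens_A P)\;\cong\;\cone(P\ltens_E f)\;\cong\;P\ltens_E\cone(f)\;=\;P\ltens_E{}_{\sigma\!}E[n].
\]
Finally, $P\ltens_E{}_{\sigma\!}E\cong P$ as left $A$-modules: the assignment $p\otimes x\mapsto p\,\sigma^{-1}(x)$ is a well-defined, left $A$-linear isomorphism $P\otimes_E{}_{\sigma\!}E\to P$ with inverse $p\mapsto p\otimes1$. Hence $\Phi_P(P)\cong P[n]$, as claimed.

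I expect the only genuinely delicate step to be the identification of the connecting map of $\Delta\ltens_A P$ with $P\ltens_E f$: this requires carefully tracking the counit $\ve^R$ and the ``dual basis'' identification through the various tensor products, i.e.\ a routine but slightly fiddly diagram chase that ultimately rests on the triangle identities of the relevant adjunction. Everything else is formal manipulation of triangulated functors applied to $\nabla$ and $\Delta$, together with the explicit twist isomorphism at the end.
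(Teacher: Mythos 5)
Your proposal is correct and follows essentially the same route as the paper's proof: apply $-\ltens_A P$ to the triangle $\Delta$, collapse $P^\vee\ltens_A P$ to $E$, identify the resulting connecting map with $P\ltens_E f$ via the triangle identity for the adjunction $-\ltens_E P^\vee\dashv -\ltens_A P$, and then use $P\otimes_E{}_{\sigma\!}E\cong P$ as left $A$-modules. The only cosmetic difference is that the paper builds the comparison isomorphism $\gamma$ explicitly as the unit $\eta^R_{P\ltens_E Y}$ and then completes a morphism of triangles via the completion axiom and the 5-lemma, whereas you phrase the same fact as the dual-basis identification $P^\vee\ltens_A P\cong E$; these are the same isomorphism, and the zigzag check you flag as "fiddly" is exactly the commutativity computation $(g\ltens_A P)\circ\gamma=\delta\circ(P\ltens_E f)$ the paper carries out.
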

	
	\begin{proof}
		Consider the triangles $P \ltens_E \nabla$ and $\Delta \ltens_A P$ in $D^b(A)$. The former is
		\begin{center}
			\begin{tikzcd}[sep=small]
				P \ltens_E Y \arrow[r] & P \ltens_E E \arrow[r] & P \ltens_E E_{\sigma^{-1}}[n] \arrow[r, rightsquigarrow] & {}
			\end{tikzcd}
		\end{center}
		since ${}_{\sigma\!}E \cong E_{\sigma^{-1}}$, and the latter
		\begin{center}
			\begin{tikzcd}[sep=small]
				P \ltens_E Y \ltens_E P^\vee \ltens_A P \arrow[r] & A \ltens_A P \arrow[r] & X \ltens_A P \arrow[r, rightsquigarrow] & {}.
			\end{tikzcd}
		\end{center}
		Observe first that, forgetting the right module structure, $P \ltens_E E_{\sigma^{-1}}[n] \cong P[n]$ in $D^b(A)$. We wish to build a commutative diagram
		\begin{center}
			\begin{tikzcd}
				P \ltens_E Y \arrow[r,"P \ltens_E f"] \arrow[d,"\gamma"] & P \ltens_E E \arrow[d,"\delta"]  \\
				P \ltens_E Y \ltens_E P^\vee \ltens_A P \arrow[r,"g \ltens_A P"] & A \ltens_A P 
			\end{tikzcd},
		\end{center}
		in which the vertical arrows are isomorphisms, from which the completion axiom and the 5-Lemma give an isomorphism of triangles
		\begin{center}
			\begin{tikzcd}[column sep=small]
				P \ltens_E Y \arrow[r] \arrow[d,"\sim"] & P \ltens_E E \arrow[r] \arrow[d,"\sim"] & P \ltens_E E_{\sigma^{-1}}[n] \arrow[d,"\sim"] \arrow[r, rightsquigarrow] & {} \\
				P \ltens_E Y \ltens_E P^\vee \ltens_A P \arrow[r] & A \ltens_A P \arrow[r] & X \ltens_A P \arrow[r, rightsquigarrow] & {}
			\end{tikzcd}
		\end{center}
		so that $\Phi_P(P) = X \ltens_A P \cong P \ltens_E E_{\sigma^{-1}}[n] \cong P[n]$.
		
		Let $\delta$ be the obvious isomorphism induced by the isomorphisms $P \ltens_E E \cong P \cong A \ltens_A P$. Consider the adjunction $- \ltens_E P^\vee \dashv - \ltens_A P$. Let $\ve^R$ and $\eta^R$ be the counit and unit of this adjunction. Define $\gamma$ by 
		\begin{center}
			\begin{tikzcd}
				P \ltens_E Y \arrow[r,"\eta^R_{P \ltens_E Y}"] & P \ltens_E Y \ltens_E P^\vee \ltens_A P.
			\end{tikzcd}
		\end{center}
		The triangle $P \ltens_E \nabla$ informs us that $P \ltens_E Y \in \langle P \rangle$ in $D^b(\opcat{E})$, so $\gamma$ is an isomorphism. 
		
		It thus remains to show that $(g \ltens_A P) \circ \gamma = \delta \circ (P \ltens_E f)$. From the construction of $g$, we have a commutative diagram
		\begin{center}
			\begin{tikzcd}[column sep=large]
				P \ltens_E Y \ltens_E P^\vee \ltens_A P \arrow[d, "P \ltens_E f \ltens_E P^\vee \ltens_A P"] \arrow[r,"g \ltens_A P"] & A \otimes P \\
				P \ltens_E E \ltens_E P^\vee \ltens_A P \arrow[r,"\delta \ltens_E P^\vee \ltens_A P"] & A \ltens_A P \ltens_E P^\vee \ltens_A P \arrow[u,"\ve^R_A \ltens_A P"] 
			\end{tikzcd}
		\end{center}
		so that $(g \ltens_A P) \circ \gamma = (\ve^R_A \ltens_A P) \circ (\delta \ltens_E P^\vee \ltens_A P) \circ (P \ltens_E f \ltens_E P^\vee \ltens_A P) \circ \eta^R_{P \ltens_E Y}$. Since $P^\vee \ltens_A P \cong E$, by the naturality of $\eta^R$ we have that $(g \ltens_A P) \circ \gamma = (\ve^R_A \ltens_A P) \circ \eta^R_{A \ltens_A P} \circ \delta \circ (P \ltens_E f)$, and since $(\ve^R_A \ltens_A P) \circ \eta^R_{A \ltens_A P} = \id_{A \ltens_A P}$, we have $(g \ltens_A P) \circ \gamma = \delta \circ (P \ltens_E f)$. The desired commutative diagram therefore exists, and the result follows.
	\end{proof}
	
	Combining Lemma \ref{lem:Xisperfect}, Propositions \ref{prop:psionperp}, \ref{prop:psionP} and Theorem \ref{thm:bridgeland}, we conclude the following.
	
	\begin{cor}\label{cor:Phifullyfaithful}
		The functor \begin{tikzcd}[cramped,sep=small]\Phi_P: D^b(A) \arrow[r] & D^b(A)\end{tikzcd} is fully faithful.
	\end{cor}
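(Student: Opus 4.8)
The plan is to apply Bridgeland's criterion, Theorem~\ref{thm:bridgeland}, to the functor $\Phi_P=X\ltens_A-$. By Lemma~\ref{lem:Xisperfect} the bimodule complex $X$ is perfect over $A$ and over $\opcat A$, so $X^\vee\ltens_A-$ is simultaneously a left and a right adjoint of $\Phi_P$; in particular $\Phi_P$ has the adjoints demanded by Theorem~\ref{thm:bridgeland}. By Lemma~\ref{lem:spanningclass} the collection $\mathcal S=\{P\}\cup P^\perp$ is a spanning class for $D^b(A)$, so it suffices to show that for all $U,V\in\mathcal S$ and all $i\in\mathbb Z$ the map
\[
\Phi_P\colon\Hom_{D^b(A)}(U,V[i])\longrightarrow\Hom_{D^b(A)}(\Phi_P(U),\Phi_P(V[i]))
\]
is bijective. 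I would check this on the four possible pairs $(U,V)$ in $\mathcal S$.

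If $U,V\in P^\perp$, then, reading off the proof of Proposition~\ref{prop:psionperp}, the morphism $A\to X$ in the triangle $\Delta$ induces a natural transformation $\iota\colon\Id_{D^b(A)}\to\Phi_P$ whose restriction to $P^\perp$ is a natural \emph{isomorphism} (the left-hand term of $\Delta\ltens_A V$ vanishes). Naturality of $\iota$ then forces $\Phi_P(\phi)=\iota_{V[i]}\circ\phi\circ\iota_U^{-1}$ for every $\phi$, so $\Phi_P$ is bijective on $\Hom_{D^b(A)}(U,V[i])$. If exactly one of $U,V$ equals $P$, then the source Hom-space already vanishes: directly from the definition of $P^\perp$ in one configuration, and from $P^\perp={}^{\perp}P$ (which holds because $A$ is symmetric, so $P$ is $0$-Calabi--Yau) in the other. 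The target Hom-space is, by Propositions~\ref{prop:psionperp} and~\ref{prop:psionP}, a shift of one of the same vanishing groups, since $\Phi_P(P)\cong P[n]$ and $\Phi_P$ fixes objects of $P^\perp$; hence it is also zero. So the map is bijective in all these cases.

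The remaining case $U=V=P$ is where the genuine work lies. For $i\ne0$ both sides vanish, as $\Hom_{D^b(A)}(P,P[i])=\Ext^i_A(P,P)=0$ (with $P$ projective) and $\Hom_{D^b(A)}(\Phi_P(P),\Phi_P(P)[i])\cong\Hom_{D^b(A)}(P[n],P[n+i])=0$. For $i=0$ the map is a unital $k$-algebra endomorphism of $\End_{D^b(A)}(P)\cong\opcat E$, and since $\opcat E$ is finite-dimensional it is enough to prove injectivity. I would reformulate this through the adjunction $\Phi_P\dashv(X^\vee\ltens_A-)$: under it, $\psi\mapsto\Phi_P(\psi)$ becomes post-composition with the unit $\eta_P\colon P\to X^\vee\ltens_A X\ltens_A P$, and by naturality of $\eta$ injectivity amounts to $\eta_P$ being a split monomorphism. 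A retraction can be produced from the fact that $X^\vee\ltens_A-$ is \emph{also} a left adjoint of $\Phi_P$, so there is a counit $X^\vee\ltens_A X\ltens_A-\to\Id$ whose value at $P$ composed with $\eta_P$ is checked to be an automorphism of $P$ by unwinding the explicit identification of $\Delta\ltens_A P$ with $P\ltens_E\nabla$ from the proof of Proposition~\ref{prop:psionP} (there the comparison maps come from the unit and counit of $-\ltens_E P^\vee\dashv-\ltens_A P$, which are isomorphisms on the objects that occur since $P^\vee\ltens_A P\cong E$). Granting this, bijectivity holds throughout $\mathcal S$, and Theorem~\ref{thm:bridgeland} yields that $\Phi_P$ is fully faithful.

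The main obstacle is precisely the case $i=0$, $U=V=P$: controlling the action of $\Phi_P$ on morphisms $P\to P$, as opposed to its action on the object $P$, i.e. showing that the induced endomorphism of $\End_{D^b(A)}(P)\cong\opcat E$ is bijective. Everything else reduces, once Propositions~\ref{prop:psionperp} and~\ref{prop:psionP} are in hand, to formal manipulations with the spanning class and with $0$-Calabi--Yau duality.
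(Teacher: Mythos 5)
Your overall strategy matches the paper's: apply Bridgeland's criterion (Theorem~\ref{thm:bridgeland}) to the spanning class $\{P\}\cup P^\perp$ with the two-sided adjoint $X^\vee\ltens_A-$ furnished by Lemma~\ref{lem:Xisperfect}, and reduce to Propositions~\ref{prop:psionperp} and~\ref{prop:psionP}. You also correctly isolate the only real content, the case $U=V=P$, $i=0$: everything else is either handled by the naturality of $h\ltens_A-\colon\Id\to\Phi_P$ on $P^\perp$, or vanishes because $A$ is symmetric so $P^\perp={}^{\perp}P$.

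However, your treatment of that remaining case has a gap. You propose to show injectivity of post-composition with the unit $\eta_P\colon P\to X^\vee\ltens_A X\ltens_A P$ by taking as a retraction the counit $\epsilon'_P$ of the \emph{opposite} adjunction $(X^\vee\ltens_A-)\dashv\Phi_P$, and you assert that $\epsilon'_P\circ\eta_P$ is an automorphism of $P$ ``by unwinding.'' But the unit of one adjunction and the counit of its reverse are not connected by any triangle identity; the composite $\epsilon'\circ\eta\colon A\to X^\vee\ltens_A X\to A$ is just some central element (a ``trace'' of $X$), and there is no a priori reason it should act invertibly on $P$ before full faithfulness is already known. The comparison maps $\gamma,\delta$ in Proposition~\ref{prop:psionP} concern the adjunction $-\ltens_E P^\vee\dashv-\ltens_A P$, not the adjunction for $\Phi_P$, so citing them does not close this step. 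The more direct route, and I believe the one implicitly intended by the paper, is to observe that $\gamma$ and $\delta$ in the proof of Proposition~\ref{prop:psionP} are natural and hence right $E$-linear, so the isomorphism $\Phi_P(P)=X\ltens_A P\cong P\ltens_E E_{\sigma^{-1}}[n]$ holds in $\bbimder{A}{E}$. Under this identification, for $\phi\in\End_{D^b(A)}(P)\cong\opcat{E}$ given by right multiplication by $e\in E$, the morphism $\Phi_P(\phi)=X\ltens_A\phi$ becomes right multiplication by $e$ on $P_{\sigma^{-1}}[n]$, i.e.\ by $\sigma^{-1}(e)$ on $P[n]$. Thus the induced map on $\End_{D^b(A)}(P)$ is $e\mapsto\sigma^{-1}(e)$ (up to the inner automorphism coming from the choice of identification), which is a bijection because $\sigma$ is an automorphism of $E$. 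This disposes of the last case cleanly and avoids the unproven claim about $\epsilon'_P\circ\eta_P$.
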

	
	We can now show that $\Phi$ is an equivalence.
	
	\begin{prop}\label{prop:Phiequiv}
		The functor \begin{tikzcd}[cramped,sep=small]\Phi_P: D^b(A) \arrow[r] & D^b(A)\end{tikzcd} is an equivalence.
	\end{prop}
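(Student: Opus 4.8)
The strategy is to upgrade the full faithfulness of $\Phi_P$ (Corollary \ref{cor:Phifullyfaithful}) to an equivalence by a formal argument with its adjoints, using the computations of Propositions \ref{prop:psionP} and \ref{prop:psionperp}. Write $\Psi = X^\vee \ltens_A -: D^b(A) \to D^b(A)$. By Lemma \ref{lem:Xisperfect} and the remark following it, $X$ is perfect over $A$ on both sides, so $\Psi$ is simultaneously a left and a right adjoint of $\Phi_P$. Since $\Phi_P$ is fully faithful, the unit $\Id_{D^b(A)} \to \Psi\Phi_P$ of the adjunction $\Phi_P \dashv \Psi$ is a natural isomorphism; in particular $\Psi\Phi_P \cong \Id_{D^b(A)}$. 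It therefore suffices to prove that $\Phi_P$ is essentially surjective, and for this it is enough to show that $\Psi$ is \emph{conservative} in the sense that $\Psi(V) \cong 0$ implies $V \cong 0$. Indeed, granting this, for any $V \in D^b(A)$ one completes the counit of $\Phi_P \dashv \Psi$ to a distinguished triangle $\Phi_P\Psi(V) \xrightarrow{\varepsilon_V} V \to C_V \to \Phi_P\Psi(V)[1]$, applies the triangulated functor $\Psi$, and notes that the triangle identity $\Psi(\varepsilon_V)\circ\eta_{\Psi(V)} = \id_{\Psi(V)}$ together with the invertibility of the unit $\eta$ forces $\Psi(\varepsilon_V)$ to be an isomorphism; hence $\Psi(C_V) \cong 0$, so $C_V \cong 0$ by conservativity, so $\varepsilon_V$ is an isomorphism and $\Phi_P\Psi \cong \Id_{D^b(A)}$.

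To see that $\Psi$ is conservative, suppose $\Psi(V) \cong 0$. As $\Psi$ is triangulated we have $\Psi(V[i]) \cong \Psi(V)[i] \cong 0$ for all $i$, so the adjunction $\Phi_P \dashv \Psi$ gives, for every object $U$ of $D^b(A)$ and every $i \in \mathbb{Z}$,
\[
\Hom_{D^b(A)}(\Phi_P(U),\, V[i]) \;\cong\; \Hom_{D^b(A)}(U,\, \Psi(V)[i]) \;=\; 0 .
\]
Now let $U$ range over the spanning class $\mathcal{S} = \{P\} \cup P^\perp$ of Lemma \ref{lem:spanningclass}. By Propositions \ref{prop:psionP} and \ref{prop:psionperp} we have $\Phi_P(P) \cong P[n]$ and $\Phi_P(W) \cong W$ for every $W \in P^\perp$, so the family $\{\Phi_P(U) : U \in \mathcal{S}\}$ agrees, up to isomorphism, with $\{P[n]\} \cup P^\perp$. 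Replacing a member of a spanning class by a shift of it again yields a spanning class, so $\{P[n]\} \cup P^\perp$ is a spanning class for $D^b(A)$, and the display above shows $\Hom_{D^b(A)}(W',V[i]) = 0$ for every member $W'$ of it and every $i$. By the defining property of a spanning class, $V \cong 0$. Hence $\Psi$ is conservative, and by the previous paragraph $\Phi_P$ is an equivalence, with quasi-inverse $\Psi$.

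The one point worth flagging is that one should not try to shortcut this by observing that the essential image of $\Phi_P$ is a thick subcategory containing $P[n]$ and all of $P^\perp$ and concluding that it is all of $D^b(A)$: the thick subcategory $\langle P\rangle$ generated by $P$ together with $P^\perp$ need not exhaust $D^b(A)$ — already when $Q = 0$ it is merely $\Perf(A) \ne D^b(A)$ — so the adjoint-functor argument is genuinely needed rather than a generation argument. Everything else is formal given Lemma \ref{lem:Xisperfect}, Lemma \ref{lem:spanningclass}, and Propositions \ref{prop:psionperp}--\ref{prop:psionP}; the only routine verifications are that $\Psi$ is both adjoints of $\Phi_P$, that $\{P[n]\}\cup P^\perp$ is again a spanning class, and the triangle-identity bookkeeping.
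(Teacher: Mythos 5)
Your proof is correct, but it follows a genuinely different route from the paper's. The paper proceeds by restricting $\Phi_P$ to $\Perf(A)$ (this is possible by Lemma~\ref{lem:Xisperfect}), noting that the image $\Phi_P(\Perf(A))$ is a thick subcategory of $\Perf(A)$, and then running a generation argument \emph{inside} $\Perf(A)$: $P[n] \cong \Phi_P(P)$ puts $\langle P\rangle$ into the image, and applying $-\ltens_A Q$ to the defining triangle $\Delta$ produces a triangle $P\ltens_E Y\ltens_E M \to Q \to X\ltens_A Q$ whose outer terms lie in the image (here strong periodicity of $M$ is used once more, to place $Y\ltens_E M$ in $\Perf(E)$ and hence $P\ltens_E Y\ltens_E M$ in $\langle P\rangle$), so $Q$ and finally $A \cong P\oplus Q$ land in the image by thickness. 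This makes $\Phi_P|_{\Perf(A)}$ an equivalence, and Rickard's derived Morita theorem (Theorem~\ref{thm:derivedmorita}) then promotes $\Phi_P$ to an equivalence on all of $D^b(A)$. Your argument instead stays entirely formal: you use that $\Psi = X^\vee\ltens_A -$ is simultaneously a left and right adjoint (a consequence of perfectness on both sides plus symmetry of $A$), upgrade full faithfulness to $\Psi\Phi_P\cong\Id$, and then prove essential surjectivity via conservativity of $\Psi$, which you derive from the spanning-class property of $\{P\}\cup P^\perp$ and the computations $\Phi_P(P)\cong P[n]$, $\Phi_P|_{P^\perp}\cong\Id$. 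This avoids a second appeal to the strong periodicity hypothesis and avoids Rickard's theorem, at the price of the adjoint-functor bookkeeping. Your cautionary remark is apt and in fact highlights the real content of the paper's approach: the thick subcategory generated by $\{P[n]\}\cup P^\perp$ is generally a proper subcategory of $D^b(A)$, so one cannot conclude essential surjectivity by a naive generation argument in $D^b(A)$; the paper sidesteps this by generating within $\Perf(A)$ and then invoking Rickard, whereas you sidestep it via adjoints and the spanning class. Both are valid; yours is arguably the cleaner formal argument, the paper's is more self-contained and transparent about where the periodicity hypothesis enters.
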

	
	\begin{proof}
		First, we note that, since $X$ is perfect in $D^b(A)$ and in $D^b(\opcat{A})$ by Lemma \ref{lem:Xisperfect}, $\Phi_P$ restricts to a functor \begin{tikzcd}[cramped, sep=small]
			\hat{\Phi}_P: \Perf(A) \arrow[r] & \Perf(A).
		\end{tikzcd} Moreover, since $\Phi_P$ is fully faithful by Corollary \ref{cor:Phifullyfaithful}, this restriction is fully faithful, too. The image $\Phi_{P}(\Perf(A))$ is therefore a thick subcategory of $\Perf(A)$.
		
		By Proposition \ref{prop:psionP}, $\Phi_P(P) \cong P[n]$. Thus, $\langle P \rangle$ is contained in the image $\Phi_P(\Perf(A))$. Applying the functor $- \ltens_A Q$ to the triangle $\Delta$, we obtain a triangle
		\begin{center}
			\begin{tikzcd}[sep=small]
				P \ltens_E Y \ltens_E P^\vee \ltens_A Q \arrow[r] & A \ltens_A Q \arrow[r] & X \ltens_A Q \arrow[r, rightsquigarrow] & {}
			\end{tikzcd}
		\end{center}
		in $D^b(A)$. We have $P^\vee \ltens_A Q \cong M$, and since $Y \ltens_E M$ is perfect in $D^b(E)$, $P \ltens_E Y \ltens_E P^\vee \ltens_A Q \cong P \ltens_E Y \ltens_E M$ is isomorphic to an object in $\langle P \rangle$, and is therefore in $\Phi_P(\Perf(A))$. Since $Q$ is projective, $X \ltens_A Q \cong \Phi_P(Q) \in \Phi_P(\Perf(A))$. Since $\Phi_P(\Perf(A))$ is closed under triangles, $A \ltens_A Q \cong Q \in \Phi_P(\Perf(A))$. Thus, $A \cong P \oplus Q \in \Phi_P(\Perf(A))$, so $\Phi_P(\Perf(A))$ must contain all of $\Perf(A)$. In particular, the restriction of $\Phi_P$ to $\Perf(A)$ is essentially surjective, so is an equivalence \begin{tikzcd}[cramped, sep=small]
			\Phi_P: \Perf(A) \arrow[r,"\sim"] & \Perf(A).
		\end{tikzcd} By \cite[Theorem 6.4]{rickard_derivedmorita}, $\Phi_P$ is therefore an equivalence \begin{tikzcd}[cramped, sep=small]
			\Phi_P: D^b(A) \arrow[r,"\sim"] & D^b(A),
		\end{tikzcd} and we are done.
	\end{proof}
	
	The final step is to show that $\Phi_P$ is a perverse equivalence with the expected perversity. 
	
	\begin{prop}\label{prop:Phiperverse}
		The generalised periodic twist \begin{tikzcd}[cramped,sep=small]\Phi_P: D^b(A) \arrow[r,"\sim"] & D^b(A)\end{tikzcd} is a two-step self-perverse equivalence relative to the filtration $0 \subset_0 \mathcal{A}_1 \subset_n A\modcat$.
	\end{prop}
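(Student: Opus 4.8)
The plan is to use that, by Proposition~\ref{prop:Phiequiv}, $\Phi_P$ is already an equivalence, so only the two conditions of Definition~\ref{defn:perverse_gen} for the filtration $0 \subset_0 \mathcal{A}_1 \subset_n A\modcat$ remain (the width then being $n$). First I would invoke Proposition~\ref{prop:Aisbasic2} to reduce to the case $A$ basic with $A \cong P \oplus Q$, writing $P = \bigoplus_{i \in I \setminus J} P_i$, $Q = \bigoplus_{j \in J} P_j$ and $\mathcal{A}_1 = \mathcal{A}_J$; this is legitimate since $\Phi_P \cong \Phi_{P'}$ and the perversity data depend only on the isomorphism classes of the summands of $P$. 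I would then verify perversity through Proposition~\ref{prop:perverseproj}, i.e.\ on the associated filtration of projectives $\emptyset \subset \mathcal{P}_1 \subset \mathcal{P}_2$, where $\mathcal{P}_1$ consists of the indecomposable summands of $P$ and $\mathcal{P}_2$ of all indecomposable projectives, the attached function taking value $n$ on $\mathcal{P}_1$ and $0$ on $\mathcal{P}_2 \setminus \mathcal{P}_1$.

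The condition along $\mathcal{A}_1$ comes straight from earlier results. By Proposition~\ref{prop:psionperp}, $\Phi_P$ fixes every object of $P^\perp = D^b_{\mathcal{A}_1}(A)$ up to isomorphism, so restricts to an autoequivalence of $D^b_{\mathcal{A}_1}(A)$; and for a module $V$ in $\mathcal{A}_1$ the triangle $\Delta \ltens_A V$ degenerates (using $P^\vee \ltens_A V \cong 0$) to $0 \to V \to \Phi_P(V) \to {}$, so the map induced by $A \to X$ is the natural isomorphism $\Id_{\mathcal{A}_1} \xrightarrow{\sim} \Phi_P|_{\mathcal{A}_1}$. On the projective side: Proposition~\ref{prop:psionP} gives $\Phi_P(P) \cong P[n]$, and since $\Phi_P$ is an equivalence it permutes the indecomposable summands of $P$ up to the shift $[n]$, say $\Phi_P(P_i) \cong P_{\rho(i)}[n]$ for a permutation $\rho$ of $I \setminus J$; this is exactly the data required along the $\mathcal{P}_1$-stratum with function value $n$.

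The heart of the argument is the $\mathcal{P}_2 \setminus \mathcal{P}_1$-stratum. Fix $j \in J$ and apply $- \ltens_A P_j$ to the defining triangle $\Delta$. Since $P^\vee \ltens_A P_j \cong \Hom_A(P, P_j) =: M_j$ is a direct summand of $M = \Hom_A(P, Q) = \bigoplus_{j \in J} M_j$, this yields a triangle $P \ltens_E Y \ltens_E M_j \to P_j \to \Phi_P(P_j) \to {}$ in $D^b(A)$. As $M$ is strongly $\sigma$-periodic relative to $\alpha$, the isomorphism $\Omega^n_E(M) \cong {}_{\sigma\!}M$ induced by $\alpha \ltens_E M$ respects this decomposition, so each $M_j$ is strongly $\sigma$-periodic relative to $\alpha$ as well; by Lemma~\ref{lem:strongperiodic}, $Y \ltens_E M_j$ is perfect in $D^b(E)$, and since $\alpha \ltens_E M_j$ induces an isomorphism $\Omega^n_E(M_j) \xrightarrow{\sim} {}_{\sigma\!}M_j$ this extension is represented by a truncated projective resolution $0 \to {}_{\sigma\!}M_j \to Q_{n-1} \to \cdots \to Q_0 \to M_j \to 0$, whence $Y \ltens_E M_j$ is isomorphic in $D^b(E)$ to the complex $[\, Q_{n-1} \to \cdots \to Q_0 \,]$ of projective $E$-modules in degrees $n-1, \dots, 0$. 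Since $P \ltens_E -$ carries projective $E$-modules into $P\cadd$, the object $P \ltens_E Y \ltens_E M_j$ is a complex with all terms in $P\cadd$ placed in degrees $n-1, \dots, 0$, and so the cone of its map to $P_j$ (in degree $0$) gives
\[
\Phi_P(P_j) \;\cong\; [\, P \ltens_E Q_{n-1} \to \cdots \to P \ltens_E Q_0 \to P_j \,]
\]
concentrated in degrees $n, \dots, 1, 0$. Every term lies in $P\cadd$ except in degree $0$, which is the single indecomposable $P_j \in \mathcal{P}_2 \setminus \mathcal{P}_1$, and $P_j$ occurs in no other degree since $P_j \notin P\cadd$. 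This is precisely the shape demanded by Proposition~\ref{prop:perverseproj}(i) with function value $0$, while the bijection (ii) is the identity $P_j \mapsto P_j$ on $\{P_j\}_{j \in J}$. Hence $\Phi_P$ is perverse relative to $0 \subset_0 \mathcal{A}_1 \subset_n A\modcat$, of width $n$.

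The step I expect to require the most care is the identification of $Y \ltens_E M_j$ with the truncated projective resolution of $M_j$: this is where the full strength of strong periodicity, rather than mere perfectness of $Y \ltens_E M_j$, is genuinely used, since it is what forces the lone $\mathcal{P}_2 \setminus \mathcal{P}_1$-summand of $\Phi_P(P_j)$ into exactly the degree dictated by the perversity data, with no stray summands from $Q\cadd$ intruding. One could instead run the verification through Proposition~\ref{prop:perversesimp} on simple modules, computing $\Phi_P(S_i)$ for $i \notin J$ from the triangle $P \ltens_E Y \ltens_E \Hom_A(P, S_i) \to S_i \to \Phi_P(S_i) \to {}$, but that route is messier because $S_i$ is not projective. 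The reduction to the basic case and the handling of small $n$ are routine.
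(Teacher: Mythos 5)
Your proof is correct and takes essentially the same route as the paper's: reduce to the basic case, verify the projective criterion of Proposition~\ref{prop:perverseproj} with $\mathcal{P}_1$ the summands of $P$ and value $n$ there, use Proposition~\ref{prop:psionP} plus Krull--Schmidt for the $P$-stratum, and apply the defining triangle $\Delta \ltens_A P_j$ for the $Q$-stratum. One small remark: your explicit identification of $Y \ltens_E M_j$ with a truncated projective resolution in degrees $0,\ldots,n-1$ is a nice picture but is not actually required, and your closing comment that the ``full strength of strong periodicity'' is needed to keep $Q$-summands out of the wrong degrees is misleading --- mere perfectness of $Y \ltens_E M_j$ (already given by Lemma~\ref{lem:strongperiodic}) forces $P \ltens_E Y \ltens_E M_j \in \langle P\rangle$, so the cone has all terms in $P\cadd$ except the single $P_j$ in degree $0$, and Proposition~\ref{prop:perverseproj} imposes no constraint on which degrees the $\mathcal{P}_1$-terms occupy; this is exactly the terse ``similar argument'' the paper alludes to from Proposition~\ref{prop:Phiequiv}.
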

	
	\begin{proof}
		We appeal to Proposition \ref{prop:perverseproj}. Again, assume that $A$ is basic and let $J \subset I$ be such that $P \cong \bigoplus_{i \in I\setminus J}$ and $Q \cong \bigoplus_{j \in J}P_j$. 
		
		Recall that we have an equivalence \begin{tikzcd}[cramped,sep=small]
			\Hom_A(P,-): P\cadd \arrow[r,"\sim"] & E\proj
		\end{tikzcd} of additive categories Then there is a set of orthogonal idempotents $\{e_i\}_{i \in I \setminus J}$ such that $Pe_i \cong P_i$ as right $E$-modules. The automorphism $\sigma^{-1}$ of $E$ permutes this set; we write $P_{\sigma^{-1}(i)}$ for the summand of $P$ corresponding to $P\sigma^{-1}(e_i)$. From the proof of Proposition \ref{prop:psionP} we have that $\Phi_P(P) = X \ltens_A P \cong P \ltens_E E_{\sigma^{-1}}[n]$. For $i \in J \setminus I$, we have \[\Phi_P(P_i) \cong X \ltens_A Pe_i \cong P \ltens_E E_{\sigma^{-1}}e_i[n] \cong P \ltens_E E\sigma^{-1}(e_i)[n] \cong P_{\sigma^{-1}(i)}[n].\]		
		
		Next, for $P_j$ a direct summand of $Q$, as in the proof of Proposition \ref{prop:Phiequiv} we have a triangle
		\begin{center}
			\begin{tikzcd}[sep=small]
				P \ltens_E Y \ltens_E P^\vee \ltens_A P_j \arrow[r] & A \ltens_A P_j \arrow[r] & X \ltens_A P_j \arrow[r, rightsquigarrow] & {}
			\end{tikzcd}
		\end{center}
		and a similar argument informs us that $\Phi_P(P_j) \cong X \ltens_A P_j$ is isomorphic in $D^b(A)$ to a complex with $P_j$ in degree 0, and all other terms contained in $\langle P \rangle$. 
		
		With $\mathcal{P}$ the set of projective indecomposable $A$-modules and $\mathcal{P}_1$ the subset of direct summands of $P$, the result is clear from Proposition \ref{prop:perverseproj}.
	\end{proof}
	
	Combining Proposition \ref{prop:Phiequiv} with Proposition \ref{prop:Phiperverse} completes the proof of Theorem \ref{thm:periodictoperverse}, which together with Theorem \ref{thm:perversetoperiodic} completes the proof of Theorem \ref{thm:main}.

	\subsection{Cycle of equivalences}\label{subsec:cycle}
	
	Grant's Theorem \ref{thm:grantperiodic} produces a rather satisfying cycle of derived equivalences, \cite[Theorem 5.11]{grant_periodic}. This carries over to the generalised setting, with a slight adaptation. 
	
	Assume $A$ is basic. Again by Proposition \ref{prop:Aisbasic1} and \ref{prop:Aisbasic2}, this restriction incurs no loss of generality. Let $P$ and $Q$ be projective $A$-modules such that $A \cong P \oplus Q$ as $A$-modules. Let $J \subset I$ be the subset such that $P \cong \bigoplus_{i \in I \setminus J}P_i$ and $Q \cong \bigoplus_{j \in J}P_j$.
	
	Let \begin{tikzcd}[cramped, sep=small]
		F^{(0)}_J: D^b(A) \arrow[r,"\sim"] & D^b(A^{(1)})
	\end{tikzcd} be the elementary perverse equivalence for $A$ at $J$. This is induced by a combinatorial tilting complex $T=\bigoplus_{i \in I}T_i$. For each $i \in I$, let $P_i^{(1)} = F^{(0)}_J(T_i)$. Then the $P_i^{(1)}$ form a complete set of projective indecomposable $A^{(1)}$-modules up to isomorphism. If $P^{(1)} = \bigoplus_{i \in I \setminus J}P_i^{(1)}$ and $\mathcal{A}^{(1)}_1$ is the Serre subcategory of $A^{(1)}\modcat$ prime to $P^{(1)}$, then $F_J^{(0)}$ is perverse relative to the filtrations
	\[
	0 \subset_0 \mathcal{A}_1 \subset_{-1} A\modcat,
	\]
	\[
	0 \subset_0 \mathcal{A}^{(1)}_1 \subset_{-1} A^{(1)}\modcat.
	\]
	
	Iterating this construction, for each $i$, with $A^{(0)} = A$, let \begin{tikzcd}[cramped, sep=small]
		F^{(i)}_J: D^b(A^{(i)}) \arrow[r,"\sim"] & D^b(A^{(i+1)})
	\end{tikzcd} be the elementary perverse equivalence for $A^{(i)}$ at $J$. Set $F= F^{(n-1)}_J \circ \ldots \circ F^{(0)}_J$, so that \begin{tikzcd}[cramped, sep=small]
		F: D^b(A) \arrow[r,"\sim"] & D^b(A^{(n)})
	\end{tikzcd} is the $n$th iterated combinatorial tilt at $J$. Let $P^{(n)}_i$ be the projective indecomposable $A^{(n)}$-module obtained as the image of the $i$th summand of iterative combinatorial tilting complexes. Set $P^{(n)} = \bigoplus_{i \in I \setminus J} P_i^{(n)}$, and $\mathcal{A}^{(n)}_1$ the Serre subcategory of $A^{(n)}\modcat$ prime to $P^{(n)}$. Then the equivalence $F$ is perverse relative to the filtrations
	\[
	0 \subset_0 \mathcal{A}_1 \subset_{-n} A\modcat,
	\]
	\[
	0 \subset_0 \mathcal{A}^{(n)}_1 \subset_{-n} A^{(n)}\modcat.
	\]
	
	Now, set $E = \opcat{\End_A(P)}$ and $M=\Hom_A(P,Q)$. Suppose for some automorphism $\sigma$ of $E$ and some $\alpha \in \Ext^n_{E\otimes_k\opcat{E}}(E,{}_{\sigma\!}E)$ that $M$ is strongly $\sigma$-periodic and $M^\vee$ is strongly $\sigma^{-1}$-periodic of period $n$ relative to $\alpha$. Then by Theorem \ref{thm:periodictoperverse}, the generalised periodic twist \begin{tikzcd}[cramped, sep=small]
		\Phi_P: D^b(A) \arrow[r,"\sim"] & D^b(A) 
	\end{tikzcd} exists and is an equivalence. Moreover, $\Phi_P$ is self-perverse relative to $0 \subset_0 \mathcal{A}_1 \subset_n A\modcat$.
	
	By Lemma \ref{lem:perversecomp}, \begin{tikzcd}[cramped,sep=small]
		G=F \circ \Phi_P: D^b(A) \arrow[r,"\sim"] & D^b(A^{(n)}) 
	\end{tikzcd} is a perverse equivalence with perversity function identically zero. Thus, this induces a Morita equivalence \begin{tikzcd}[cramped, sep=small]
		G: A\modcat \arrow[r,"\sim"] & A^{(n)}\modcat.
	\end{tikzcd} By standard Morita theory, $G(A)$ is a progenerator of $A^{(n)}$, but since $A$ is basic by assumption, and $A^{(n)}$ is basic by construction, taking opposites of endomorphism rings produces an isomorphism, $A \cong A^{(n)}$. Identifying $A$ and $A^{(n)}$ via this isomorphism, we have a commutative diagram
	\begin{center}
		\begin{tikzcd}
			D^b(A) \arrow[r,"F^{-1}"] & D^b(A) \\
			D^b(A) \arrow[u,dashed,"G"] \arrow[ur,"\Phi_P"]
		\end{tikzcd}
	\end{center}
	in which all the arrows are equivalences, and the two functors $F^{-1}$ and $\Phi_P$ are naturally isomorphic. We have therefore shown the following.
	
	\begin{thm}\label{thm:generalisedcircleofequivs}
		The generalised periodic twist \begin{tikzcd}[cramped,sep=small]\Phi_P:D^b(A) \arrow[r,"\sim"] & D^b(A)\end{tikzcd} at $P$ coincides with the inverse $F^{-1}$ of the $n$th iterated combinatorial tilt $F$ at $J$. That is, for every $V \in D^b(A)$, $\Phi_P(V) = X \ltens_A V \cong F^{-1}(V)$.
	\end{thm}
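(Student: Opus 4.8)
The plan is to realise the composite $G = F \circ \Phi_P$ as a perverse equivalence whose perversity function vanishes identically, hence as (the derived extension of) a Morita equivalence, and then — using that $A$ and $A^{(n)}$ are both basic — to promote it to an algebra isomorphism $A \cong A^{(n)}$ under which $G$ becomes the identity functor. The defining relation $G = F \circ \Phi_P$ then rearranges to $\Phi_P \cong F^{-1}$, which is the assertion (and evaluating at an object $V$ gives $\Phi_P(V) = X \ltens_A V \cong F^{-1}(V)$).

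First I would assemble the perversity data of the two equivalences being composed. By Proposition~\ref{prop:Phiperverse} (the last step of Theorem~\ref{thm:periodictoperverse}), $\Phi_P$ is self-perverse relative to $0 \subset_0 \mathcal{A}_1 \subset_n A\modcat$, while the $n$th iterated combinatorial tilt $F$ is, by construction, perverse relative to $0 \subset_0 \mathcal{A}_1 \subset_{-n} A\modcat$ on the source and $0 \subset_0 \mathcal{A}^{(n)}_1 \subset_{-n} A^{(n)}\modcat$ on the target. The key observation is that the middle filtration is shared: as a filtration of $A\modcat$ by Serre subcategories, the target filtration of $\Phi_P$ and the source filtration of $F$ are literally the same chain $0 \subset \mathcal{A}_1 \subset A\modcat$, only the perversity labels differing. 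Hence Lemma~\ref{lem:perversecomp} applies with $\Phi_P$ first and $F$ second, and $G = F \circ \Phi_P$ is perverse relative to $(\mathcal{A}_\bullet, \mathcal{A}^{(n)}_\bullet, p)$, where $p$ is the sum of the two perversity functions. Since $0 + 0 = 0$ and $n + (-n) = 0$, we get $p \equiv 0$.

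Next I would invoke Lemma~\ref{lem:perversity0} to turn $G$ into an equivalence of abelian categories $A\modcat \to A^{(n)}\modcat$, i.e.\ a Morita equivalence; since a zero-perversity perverse equivalence is the derived extension of this Morita equivalence, it suffices to show the latter is isomorphic to the identity after a suitable identification of the two algebras. Standard Morita theory says $G(A)$ is a progenerator of $A^{(n)}$ with $\opcat{\End_{A^{(n)}}(G(A))} \cong A$; since $A^{(n)}$ is basic (iterated combinatorial tilts of a basic algebra are basic) and $A$ is basic (legitimate by Propositions~\ref{prop:Aisbasic1} and~\ref{prop:Aisbasic2}), this forces $G(A) \cong A^{(n)}$ as left $A^{(n)}$-modules. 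Taking opposite endomorphism rings of that isomorphism yields an algebra isomorphism $\theta \colon A \xrightarrow{\sim} A^{(n)}$, and by construction $\theta$ is exactly the map along which $G$ is transported; identifying $A$ with $A^{(n)}$ along $\theta$ thus makes $G$ naturally isomorphic to $\Id_{D^b(A)}$. The relation $\Phi_P = F^{-1} \circ G$ then gives $\Phi_P \cong F^{-1}$.

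I expect there to be no real obstacle here — the argument is an assembly of earlier results — so the step demanding the most care is the Morita-theoretic reduction at the end: one must make sure the identification $A \cong A^{(n)}$ is the one induced by $G$ itself, so that $G$ genuinely becomes the identity functor rather than merely some Morita autoequivalence of $D^b(A)$. The one other point worth stating explicitly is that the two middle filtrations coincide as filtrations by Serre subcategories (so that Lemma~\ref{lem:perversecomp} is applicable), which is immediate since both $\Phi_P$ and $F$ are set up using the Serre subcategory prime to $P$.
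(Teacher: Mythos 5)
Your argument follows the paper's proof essentially verbatim: compose $F$ with $\Phi_P$, use Lemma~\ref{lem:perversecomp} to see the perversity function of $G = F \circ \Phi_P$ vanishes, pass to a Morita equivalence, and exploit basicness of $A$ and $A^{(n)}$ to identify the two algebras and conclude $\Phi_P \cong F^{-1}$. The one point you flag as needing care — that the isomorphism $A \cong A^{(n)}$ is the one induced by $G$, so that $G$ becomes the identity functor — is indeed the crux, and the paper handles it exactly as you describe, by taking opposite endomorphism rings of the progenerator $G(A)$.
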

	
	Thus, as in Grant's case, we obtain a cycle of derived equivalences
	\begin{center}
		\begin{tikzcd}[column sep=small]
			& D^b(A) \arrow[dr,"F_J"] \\
			D^b(A^{(n-1)}) \arrow[ur,"F_J"] & & D^b(A^{(1)}) \arrow[d,"F_J"] \\
			D^b(A^{(n-2)}) \arrow[u,"F_J"] & & D^b(A^{(2)}) \arrow[dl] \\
			& \ldots \arrow[ul]
		\end{tikzcd}
	\end{center}
	such that the complete cycle, starting and ending at $D^b(A)$, agrees with the inverse of the generalised periodic twist $\Phi_P$. As in Grant's case, we obtain for free a two-step self-perverse equivalence \begin{tikzcd}[cramped, sep=small]
		D^b(A^{(i)}) \arrow[r,"\sim"] & D^b(A^{(i)})
	\end{tikzcd} for every $i$, agreeing with the inverse of the generalised periodic twist $\Phi_{P^{(i)}}$.

	\section{Application to the Symmetric Groups}\label{sec:symmgroups}
	
	A perhaps surprising application of the results of the previous section is in the setting of the symmetric groups. The following examples are possibly an inadvertent consequence of working in small characteristic, however they are suggestive that blocks of symmetric groups, or more generally of Hecke algebras, could prove fertile ground for generating interesting periodic autoequivalences.
	
	Standard background on symmetric group representation theory can be found in \cite{james_Sn}, \cite{james-kerber_Sn}. Recall \cite{nakayama_conjecture}, \cite{brauer_nakayama} that a block $B_{\rho,w}$ of the group algebra $k\mathfrak{S}_n$ in characteristic $p$ is determined by a $p$-core partition $\rho$ and a $p$-weight $w \in \mathbb{Z}_{\ge 0}$.

	\subsection{An autoequivalence of a block of $\mathfrak{S}_6$ in characteristic 3}\label{subsec:[2:0]}

	Let $k$ be an algebraically closed field of characteristic 3, and consider the group algebra $k\mathfrak{S}_6$.  Let $A$ be the basic algebra of the block $B_{\emptyset,2}$, corresponding to the 3-core partition $\emptyset$ and of 3-weight 2. Following \cite[Theorem 7.1]{erdmann-martin_quiver}, \cite[Example 4.4]{okuyama_method}, $A$ is isomorphic to the path $k$-algebra of the quiver
	\begin{center}
		\begin{tikzcd}[sep=large]
			1 \arrow[d,shift left=0.5ex, "\alpha'"] \arrow[drr,shift left=0.5ex, "\ve" pos=0.3]  & &  4 \arrow[d,shift left=0.5ex, "\alpha"] \arrow[dll,shift left=0.5ex, "\ve'" pos=0.7] \\
			2 \arrow[dr,shift left=0.5ex, "\gamma'"] \arrow[u,shift left=0.5ex, "\beta'"] \arrow[urr,shift left=0.5ex, "\eta'" pos=0.7] & & 5 \arrow[dl,shift left=0.5ex, "\gamma"] \arrow[u,shift left=0.5ex, "\beta"] \arrow[ull,shift left=0.5ex, "\eta" pos=0.3]  \\
			& 3 \arrow[ul,shift left=0.5ex, "\delta'"] \arrow[ur,shift left=0.5ex, "\delta"]
		\end{tikzcd}
	\end{center}
	modulo the admissible ideal generated by the following list of relations:
	\begin{itemize}
		\item $\alpha\eta' = \ve\beta' = \delta\gamma'$, $\alpha'\eta = \ve'\beta = \delta'\gamma$;
		\item $\beta\ve = \eta'\alpha'$, $\beta'\ve' = \eta\alpha$, $\gamma\delta + \gamma'\delta' = 0$;
		\item $\gamma'\alpha' = \gamma\ve$, $\gamma\alpha = \gamma'\ve'$, $\beta'\delta' = \eta\delta$, $\beta\delta = \eta'\delta'$;
		\item $\delta'\gamma' = \alpha'\beta' + \ve'\eta'$, $\delta\gamma = \alpha\beta + \ve\eta$;
		\item all paths of length four starting and ending at distinct vertices are 0.
	\end{itemize}
	
	Let $P=P_1 \oplus P_2 \oplus P_4 \oplus P_5$, $Q=P_3$ and $E = \opcat{\End_A(P)}$. Then $E$ is isomorphic to the path $k$-algebra of the quiver
	\begin{center}
		\begin{tikzcd}
			1 \arrow[r, bend left =20, "\ve"] \arrow[d, bend left =20,"\alpha'"] & 5 \arrow[l, bend left =20, "\eta"] \arrow[d, bend left =20, "\beta"] \\
			2 \arrow[u, bend left =20, "\beta'"] \arrow[r, bend left =20, "\eta'"] & 4 \arrow[l, bend left =20, "\ve'"] \arrow[u, bend left =20, "\alpha"]
		\end{tikzcd}
	\end{center}
	modulo the admissible ideal generated by the set of relations \[\{ \ve\eta\ve, \; \eta\ve\eta, \; \ve'\eta'\ve', \; \eta'\ve'\eta', \; \alpha\eta' - \ve\beta', \; \alpha'\eta - \ve'\beta, \; \beta\ve - \eta'\alpha', \; \beta'\ve' - \eta\alpha \}.\]
	
	Let $I=\{1,2,4,5\}$. The projective indecomposable $E$-modules, $\overbar{P}_i$ for $i \in I$, have coinciding Loewy and socle series
	\[
	\begin{matrix}
		1 \\
		2 \;\; 5 \\
		1 \;\; 4 \;\; 1 \\
		5 \;\; 2 \\
		1
	\end{matrix} \; ,
	\qquad
	\begin{matrix}
		2 \\
		1 \;\; 4 \\
		2 \;\; 5 \;\; 2 \\
		4 \;\; 1 \\
		2
	\end{matrix} \; ,
	\qquad
	\begin{matrix}
		4 \\
		2 \;\; 5 \\
		4 \;\; 1 \;\; 4 \\
		5 \;\; 2 \\
		4
	\end{matrix} \; ,
	\qquad
	\begin{matrix}
		5 \\
		1 \;\; 4 \\
		5 \;\; 2 \;\; 5 \\
		4 \;\; 1 \\
		5
	\end{matrix} \;.
	\]
	There is an automorphism $\sigma$ of $E$, induced by the graph automorphism of the quiver given by reflecting through the horizontal line of symmetry. The automorphism $\sigma$ acts on $I$ as the permutation $(1,2)(4,5)$.
	
	Let $M=\Hom_A(P,Q)$. Then as an $E$-module, $M$ has coinciding Loewy and socle series
	\[
	M = \; \begin{matrix}
		2 \;\; 5 \\
		1 \;\; 4 \\
		2 \;\; 5
	\end{matrix}
	\]
	and a truncated projective resolution
	\begin{center}
		\begin{tikzcd}[sep=small]
			\overbar{P}_1 \oplus \overbar{P}_4 \arrow[r] & \overbar{P}_2 \oplus \overbar{P}_5 \arrow[r]  & M.
		\end{tikzcd}
	\end{center}
	One can calculate that $\Omega_E^2(M) \cong {}_{\sigma\!}M$, so that $M$ is $\sigma$-periodic of period 2. We claim that $M$ is strongly $\sigma$-periodic of period 2, and, noting that $\sigma^{-1} = \sigma$, so is $M^\vee$.
	
	Let $B$ be the subalgebra of $E$ generated by the horizontal arrows: the path $k$-algebra of the quiver
	\begin{center}
		\begin{tikzcd}
			1 \arrow[r, bend left =20, "\ve"] & 5 \arrow[l, bend left =20, "\eta"]  \\
			2 \arrow[r, bend left =20, "\eta'"] & 4 \arrow[l, bend left =20, "\ve'"] 
		\end{tikzcd}
	\end{center}
	subject to the relations $\{\ve\eta\ve, \eta\ve\eta, \ve'\eta'\ve', \eta'\ve'\eta'\}$. Then $B \cong A_{2,1} \times A_{2,1}$, with $A_{2,1}$ the Brauer tree algebra of a star on 2 edges with exceptional multiplicity $m=1$. The automorphism $\sigma$ restricted to $B$ swaps the two direct factors. The projective indecomposable $B$-modules, say $Q_i$ for $i \in I$, have Loewy and socle series
	\[
	\begin{matrix}
		1 \\
		5 \\
		1
	\end{matrix} \; ,
	\qquad
	\begin{matrix}
		2 \\
		4 \\
		2 
	\end{matrix} \; ,
	\qquad
	\begin{matrix}
		4 \\
		2 \\
		4
	\end{matrix} \; ,
	\qquad
	\begin{matrix}
		5 \\
		1 \\
		5
	\end{matrix} \;.
	\]
	It is not too difficult to see that $E$ and $M$ are projective as $B$-modules. There are relatively $B$-projective $E$-modules $U_1$, $U_2$, $U_4$ and $U_4$, with Loewy and socle series
	\[
	U_1 = \; \begin{matrix}
		1 \\
		2 \\
		1
	\end{matrix} \; , \; \; U_2 = \; \begin{matrix}
		2 \\
		1 \\
		2
	\end{matrix} \; , \; \; U_4 = \; \begin{matrix}
		4 \\
		5 \\
		4
	\end{matrix} \; , \; \; U_5 = \; \begin{matrix}
		5 \\
		4 \\
		5
	\end{matrix} \; .
	\]
	
	We claim that there is an exact sequence of $E$-$E$-bimodules of the form
	\begin{center}
		\begin{tikzcd}[sep=small]
			0 \arrow[r] & {}_{\sigma\!}E \arrow[r,"d_2"] & {}_{\sigma\!}E \otimes_B E \arrow[r,"d_1"] & E \otimes_B E \arrow[r,"d_0"] & E \arrow[r] & 0.
		\end{tikzcd}
	\end{center}

	Applying the functors $- \ltens_E S_i$, where $S_i$ are the simple $E$-modules, gives complexes of the form
	\begin{center}
		\begin{tikzcd}[sep=small]
			0 \arrow[r] & S_2 \arrow[r] & U_2 \arrow[r] & U_1 \arrow[r] & S_1 \arrow[r] & 0,
		\end{tikzcd}
	\end{center}
	\begin{center}
		\begin{tikzcd}[sep=small]
			0 \arrow[r] & S_1 \arrow[r] & U_1 \arrow[r] & U_2 \arrow[r] & S_2 \arrow[r] & 0,
		\end{tikzcd}
	\end{center}
	\begin{center}
		\begin{tikzcd}[sep=small]
			0 \arrow[r] & S_5 \arrow[r] & U_5 \arrow[r] & U_4 \arrow[r] & S_4 \arrow[r] & 0,
		\end{tikzcd}
	\end{center}
	\begin{center}
		\begin{tikzcd}[sep=small]
			0 \arrow[r] & S_4  \arrow[r] & U_4 \arrow[r] & U_5 \arrow[r] & S_5 \arrow[r] & 0.
		\end{tikzcd}
	\end{center}
	
	Thus, the algebra $E$ is $\sigma$-periodic of period 2, relative to the subalgebra $B$.	We thus have a triangle
	\begin{center}
		\begin{tikzcd}[sep=small]
			Y \arrow[r] & E \arrow[r,"\alpha"] & {}_{\sigma\!}E[2] \arrow[r,rightsquigarrow] & {}
		\end{tikzcd}
	\end{center}
	in $\bbimder{E}{E}$, defining $\alpha \in \Ext^2_{E \otimes_k \opcat{E}}(E,{}_{\sigma\!}E)$. By Proposition \ref{prop:grantperiodicmodule}, both $M$ and $M^\vee$ are strongly $\sigma$-periodic of period 2, relative to $\alpha$. The resulting generalised periodic twist, given by Theorem \ref{thm:periodictoperverse}, is the Grantian periodic twist at $P$ relative to $B$, \begin{tikzcd}[cramped, sep=small]
		\Phi_P:D^b(A) \arrow[r,"\sim"] & D^b(A),
	\end{tikzcd} perverse relative to the filtration $\emptyset \subset_0 \{3\} \subset_2 I$.
	
	We comment that a perverse autoequivalence of $A$ of this form is already known to exist. The block $B_{\emptyset,2}$ admits two autoequivalences arising from Scopes $[2:0]$ pairs \cite[Definition 2.1]{scopes_weight2}, say $\Phi_1$ and $\Phi_2$. By \cite[Theorem 7.2]{chuang-rouquier_sl2cat}, \cite[Proposition 8.4]{chuang-rouquier_perverse17}, both of these functors are self-perverse equivalences of width 1. By \cite[Theorem 2.10]{cautis-kamnitzer_braid}, the braid relation $\Phi_1\Phi_2\Phi_1 \cong \Phi_2\Phi_1\Phi_2$ holds for these two equivalences. Further, by a result of Halacheva, Licata, Losev and Yacobi \cite[Theorem 6.8, Remark 6.9]{halacheva-etal_braid}, the braid
	\begin{center}
		\begin{tikzcd}[sep=small]
			\Phi_1\Phi_2\Phi_1: D^b(B_{\emptyset,2}) \arrow[r,"\sim"] & D^b(B_{\emptyset,2})
		\end{tikzcd}
	\end{center}
	is a self-perverse equivalence relative to the \emph{isotypic filtration} $
	\emptyset \subset_0 \{3\} \subset_2 I$. The uniqueness of perverse equivalences tells us that $\Phi_P \cong \Phi_1\Phi_2\Phi_1$. 
	
	It seems likely that the restriction to the subalgebra $B \cong A_{2,1} \times A_{2,1}$ in $\Phi_P$ is in some way masking the restriction to weight 1 blocks of $k\mathfrak{S}_5$ involved in the combinatorial description of the $[2:0]$ pairs $\Phi_1$, $\Phi_2$. Regrettably, we are unable to say anything more precise about the relationship between these two formulations.

	\subsection{An autoequivalence of a block of $\mathfrak{S}_8$ in characteristic 3}\label{subsec:exotic}

	Let $k$ be an algebraically closed field of characteristic 3. Let $A$ be the basic algebra of the block $B_{(2),2}$ of $k\mathfrak{S}_8$ with 3-core $(2)$ and of 3-weight 2. Following \cite[Example 4.3]{okuyama_method}, $A$ is isomorphic to the path $k$-algebra of the quiver
	
	\begin{center}
		\begin{tikzcd}[sep = large]
			1 \arrow[ddr, shift left=0.5ex, "\gamma_1"] \arrow[drr, shift left=0.5ex, "\ve"]  & &  4 \arrow[ddl, shift left=0.5ex, "\gamma_4" pos=.75] \arrow[d, shift left=0.5ex, "\alpha"] \\
			2 \arrow[dr, shift left=0.5ex, "\gamma_2" pos=.025] & & 5 \arrow[u, shift left=0.5ex, "\beta", pos=0.35] \arrow[ull, shift left=0.5ex, "\eta"] \\
			& 3 \arrow[ul, shift left=0.5ex, "\delta_2"] \arrow[uul, shift left=0.5ex, "\delta_1" pos=.9] \arrow[uur, shift left=0.5ex, "\delta_4" pos=.25]
		\end{tikzcd}
	\end{center}
	
	modulo the admissible ideal generated by the following list of relations:
	\begin{itemize}
		\item $\beta\ve = \delta_4\gamma_1$, $\eta\alpha = \delta_1\gamma_4$, $\ve\delta_1 = \alpha\delta_4$, $\gamma_1\eta = \gamma_4\beta$;
		\item $\alpha\beta = \ve\eta$, $\gamma_1\delta_1 + \gamma_4\delta_4 = \gamma_2\delta_2$;
		\item $\alpha\delta_4\gamma_2 = 0$, $\delta_2\gamma_4\beta = 0$;
		\item $\gamma_1\delta_1\gamma_1 = 0$, $\delta_1\gamma_1\delta_1 = 0$, $\gamma_4\delta_4\gamma_4 = 0$, $\delta_4\gamma_4\delta_4 = 0$;
		\item $\delta_2\gamma_1\delta_1 = \delta_2\gamma_4\delta_4$, $\gamma_1\delta_1\gamma_2 = \gamma_4\delta_4\gamma_2$;
		\item all paths of length four starting and ending at distinct vertices are 0.
	\end{itemize}
	
	Let $P=P_2 \oplus P_3 \oplus P_4 \oplus P_5$, $Q=P_1$, $E=\opcat{\End_A(P)}$ and $M=\Hom_A(P,Q)$. Then $E$ is isomorphic to the path algebra of the quiver
	\begin{center}
		\begin{tikzcd}
			2 \arrow[r, bend left=20, "\gamma_2"] & 3 \arrow[l, bend left=20, "\delta_2"] \arrow[r, bend left=20, "\delta_4"] & 4 \arrow[l, bend left=20, "\gamma_4"] \arrow[r, bend left=20, "\alpha"] & 5 \arrow[l, bend left=20, "\beta"]
		\end{tikzcd}
	\end{center}
	modulo the admissible ideal generated by the following list of relations:
	\begin{itemize}
		\item $\alpha\delta_4\gamma_2 = 0 = \delta_2\gamma_4\beta$;
		\item $\delta_4\gamma_4\delta_4 = 0 = \gamma_4\delta_4\gamma_4$;
		\item $\gamma_2\delta_2\gamma_2 + \gamma_4\delta_4\gamma_2 = 0 = \delta_2\gamma_2\delta_2 + \delta_2\gamma_4\delta_4$;
		\item $\delta_4\gamma_2\delta_2 + \beta\alpha\delta_4 = 0 = \gamma_2\delta_2\gamma_4 + \gamma_4\beta\alpha$;
		\item $\alpha\delta_4\gamma_4 + \alpha\beta\alpha = 0 = \delta_4\gamma_4\beta + \beta\alpha\beta$;
		\item all paths of length four between distinct vertices are zero.
	\end{itemize}
	
	Let $I = \{2,3,4,5\}$. The projective indecomposable $E$-modules, say $\overbar{P}_i$ for $i \in I$, have coinciding Loewy and socle series
	\[
	\begin{matrix}
		2 \\
		3 \\
		2 \;\; 4 \\
		3 \\
		2
	\end{matrix}
	\qquad 
	\begin{matrix}
		3 \\
		2 \;\; 4 \\
		3 \;\; 5 \;\; 3 \\
		4 \;\; 2 \\
		3
	\end{matrix}
	\qquad 
	\begin{matrix}
		4 \\
		3 \;\; 5 \\
		4 \;\; 2 \;\; 4 \\
		5 \;\; 3 \\
		4
	\end{matrix}
	\qquad 
	\begin{matrix}
		5 \\
		4 \\
		3 \;\; 5 \\
		4 \\
		5
	\end{matrix} \; .
	\]
	
	There is an automorphism $\sigma$ of $E$, induced by the graph automorphism of the above quiver given by rotating the quiver $180^\circ$ about the centre. Then $\sigma$ acts on $I$ as the permutation $(2,5)(3,4)$.
	
	The $E$-module $M$ has Loewy and socle series given by
	\[
	M \; = \; \begin{matrix}
		3 \;\; 5 \\
		2 \;\; 4 \\
		3 \;\; 5
	\end{matrix} \; 
	\]
	and a truncated projective resolution
	\begin{center}
		\begin{tikzcd}[sep=small]
			\overbar{P}_2 \oplus \overbar{P}_4 \arrow[r] & \overbar{P}_4 \oplus \overbar{P}_3 \arrow[r] & \overbar{P}_3 \oplus \overbar{P}_5 \arrow[r] & M.
		\end{tikzcd}
	\end{center}
	
	One can then calculate that $\Omega_E^3(M) \cong {}_{\sigma\!}M$, so that $M$ is a $\sigma$-periodic $E$-module of period 3. We claim that there is an $\alpha \in \Ext_{E\otimes_k\opcat{E}}^3(E,{}_{\sigma\!}E)$ such that $M$ is strongly $\sigma$-periodic relative to $\alpha$. 
	
	We describe a construction, Grantian in nature, with a complex of $E$-$E$-bimodules constructed from terms projective relative to some subalgebras of $E$. We first identify these subalgebras.
	
	For $i \in I$, let $e_i$ be the primitive idempotent of $E$ such that $\overbar{P}_i = Ee_i$. Let $B$ be the subalgebra of $E$ generated by the idempotents $e=e_2+e_4$ and $f=e_3 + e_5$ and the arrows $\zeta = \gamma_2 + \gamma_4 + \alpha$ and $\xi = \delta_2 + \delta_4 + \beta$. Then $B$ is the path $k$-algebra of the quiver
	\begin{center}
		\begin{tikzcd}
			e \arrow[r,bend left=20,"\zeta"] & f \arrow[l,bend left=20,"\xi"] 
		\end{tikzcd}
	\end{center}
	modulo the admissible ideal generated by the set of relations $\{\zeta\xi\zeta, \xi\zeta\xi\}$. We comment that $B \cong A_{2,1}$ as $k$-algebras, where $A_{2,1}$ is the Brauer tree algebra of a star on two edges with exceptional multiplicity $m=1$.
	
	Next, let $C$ be the subalgebra of $E$ generated by the idempotents $e_2, e_3, e_4, e_5$ and the arrows $\gamma_4, \delta_4$. Then $C$ is the path algebra of the quiver
	\begin{center}
		\begin{tikzcd}
			2 & 3  \arrow[r, bend left=20, "\delta_4"] & 4 \arrow[l, bend left=20, "\gamma_4"]  & 5
		\end{tikzcd}
	\end{center}
	modulo the admissible ideal generated by the set of relations $\{\gamma_4\delta_4\gamma_4, \delta_4\gamma_4\delta_4\}$. Then $C \cong k \times A_{2,1} \times k$ as $k$-algebras. Note that $C$ is not an indecomposable algebra.
	
	Finally, let $D$ be the subalgebra generated by the four idempotents $e_2,e_3,e_4,e_5$. Then $D \cong k \times k \times k \times k$ as $k$-algebras. Again, $D$ is not an indecomposable algebra.
	
	Consider the sequence of $E$-$E$-bimodules
	\begin{center}
		\begin{tikzcd}[sep=small]
			0 \arrow[r] & {}_{\sigma\!}E \arrow[r,"d_3"] &  {}_{\sigma\!}E \otimes_C E \arrow[r,"d_2"] & E \otimes_{{}_{\tau\!}D} E \arrow[r,"d_1"] & E \otimes_B E \arrow[r,"d_0"] & E \arrow[r] & 0
		\end{tikzcd}
	\end{center}
	with differentials defined below, and where $E \otimes_{{}_{\tau\!}D} E$ denotes the $E$-$E$-bimodule $E \otimes_D {}_{\tau\!}D \otimes_D E$, where $\tau$ is the automorphism of $D$ acting as the permutation $(2,4)(3,5)$ on labels of simple $D$-modules. We comment that $E \otimes_{{}_{\tau\!}D} E = \bigoplus_{i \in I} \overbar{P}_{i\tau(i)}$, where $\overbar{P}_{i\tau(i)} = \overbar{P}_i \otimes_k \overbar{P}_{\tau(i)}^\vee$, so this term is projective as an $E$-$E$-bimodule.
	
	The map $d_0:E\otimes_B E \to E$ is the multiplication map, $d_0(x \otimes y) = xy$. The map $d_1: E \otimes_{{}_{\tau\!}D} E \to E \otimes_B E$ is given by $d_1(e_i \otimes e_{\tau(i)}) = e_i \otimes e_{\tau(i)}$. These elements generate $E \otimes_{{}_{\tau\!}D} E$, so this completely defines $d_1$. Further, $d_1$ is well-defined, as we need only consider the action of idempotents in $E$ on either side. Then $d_0 \circ d_1 = 0$, since
	\[
	d_0(d_1(e_i \otimes e_{\tau(i)})) = d_0(e_i \otimes e_{\tau(i)}) = e_ie_{\tau(i)} = 0.
	\]
	
	Next, we define $d_2: {}_{\sigma\!}E \otimes_C E \to E \otimes_{{}_{\tau\!}D} E$ as follows. We set
	\begin{align*}
		d_2(e_2 \otimes e_2) = &\,\alpha \otimes e_2 - e_5 \otimes \gamma_2, \\
		d_2(e_3 \otimes e_3) = &\,\delta_4\gamma_4 \otimes \delta_2 + \beta \otimes \gamma_4\delta_4 + \delta_4 \otimes \alpha\delta_4 + \delta_4\gamma_2 \otimes \delta_4  \\
		&\,- \beta\alpha\beta \otimes e_3 - e_4 \otimes \delta_2\gamma_2\delta_2, \\
		d_2(e_4 \otimes e_4) = &\,\gamma_4\delta_4 \otimes \alpha + \gamma_2 \otimes \delta_4\gamma_4 + \gamma_4 \otimes \delta_2\gamma_4 + \gamma_4\beta \otimes \gamma_4  \\
		&\,- \gamma_2\delta_2\gamma_2 \otimes e_4 - e_3 \otimes \alpha\beta\alpha., \\
		d_2(e_5 \otimes e_5) = &\,\delta_2 \otimes e_5 - e_2 \otimes \beta.
	\end{align*}
	Then one can show that $d_2(\delta_4 \otimes e_3) = d_2(e_4 \otimes \delta_4)$ and $d_2(e_3 \otimes \gamma_4) = d_2(\gamma_4 \otimes e_4)$, so that $d_2$ is well-defined. We have 
	\begin{align*}
		d_1(d_2(e_2 \otimes e_2)) &= d_1(e_5\alpha e_4 \otimes e_2 - e_5 \otimes e_3 \gamma_2 e_2) \\
		&= d_1(e_5\zeta e_4 \otimes e_2 - e_5 \otimes e_3\zeta e_2) \\
		&= d_1(e_5(\zeta \otimes 1_E - 1_E \otimes \zeta)e_2) \\
		&= e_5(\zeta \otimes 1_E - 1_E \otimes \zeta)e_2 \\
		&= 0, 
	\end{align*}
	while
	\begin{align*}
		d_1(d_2(e_5 \otimes e_5)) &= d_1(e_2\delta_2 e_3 \otimes e_5 - e_2 \otimes e_4 \beta e_5) \\
		&= d_1(e_2\xi e_3 \otimes e_5 - e_2 \otimes e_4 \xi e_5) \\
		&= d_1(e_2(\xi \otimes 1_E - 1_E \otimes \xi)e_5) \\
		&= e_2(\xi \otimes 1_E - 1_E \otimes \xi)e_5 \\
		&= 0.
	\end{align*}
	Next, we have that
	\[
	d_1(d_2(e_3 \otimes e_3)) = e_4(\xi\zeta \otimes \xi + \xi \otimes \zeta\xi - 1_E \otimes \xi\zeta\xi - \xi\zeta\xi \otimes 1_E)e_3 = 0, 
	\]
	while
	\[
	d_1(d_2(e_4 \otimes e_4)) = e_3(\zeta\xi \otimes \zeta + \zeta \otimes \xi\zeta - 1_E \otimes \zeta\xi\zeta - \zeta\xi\zeta \otimes 1_E)e_4 = 0,
	\]
	since $\zeta\xi\zeta = 0 = \xi\zeta\xi$ in $B$. Thus, $d_2 \circ d_1 = 0$.
	
	Finally, we define $d_3: {}_{\sigma\!}E \to {}_{\sigma\!}E \otimes_C E$ by
	\[
	d_3(1_E) = y_2 + y_{34} + y_5,
	\]
	where 
	\begin{align*}
		y_2 = & \; e_2 \otimes \delta_2\gamma_2\delta_2\gamma_2 + \delta_2\gamma_2 \otimes \delta_2\gamma_2 + \delta_2\gamma_2\delta_2\gamma_2 \otimes e_2 \\
		&+\gamma_2\delta_2\gamma_2 \otimes \delta_2 + \gamma_2 \otimes \delta_2\gamma_2\delta_2 - \delta_4\gamma_2 \otimes \delta_2\gamma_4 \; \in Ee_2 \otimes_C e_2E, \\
		y_{34} = & \; \delta_2 \otimes \gamma_2 - \alpha \otimes \beta + \gamma_2\delta_2 \otimes e_3 - \beta\alpha \otimes e_4 + e_3 \otimes \gamma_2\delta_2  \\
		&- e_4 \otimes\beta\alpha - \gamma_4 \otimes \delta_4 + \delta_4 \otimes \gamma_4 \; \in E(e_3 + e_4) \otimes_C (e_3 + e_4)E, \\
		y_5 = & \; e_5 \otimes \alpha\beta\alpha\beta + \alpha\beta \otimes \alpha\beta + \alpha\beta\alpha\beta \otimes e_5 \\
		&+ \beta\alpha\beta \otimes \alpha + \beta \otimes \alpha\beta\alpha - \gamma_4\beta \otimes \alpha\delta_4 \; \in Ee_5 \otimes_C e_5E.
	\end{align*}
	
	Observe that $y_5 = \sigma(y_2)$. The element $d_3(1_E)$ is central in $E \otimes_C E$, so $d_3$ is a well-defined homomorphism of bimodules. A rather painstaking calculation will then show that $d_2(d_3(1_E)) = 0$ in $E \otimes_{{}_{\tau\!}D} E$, and our sequence is a complex of $E$-$E$-bimodules.
	
	To show that this complex is an exact sequence, one considers the complexes obtained by applying the functors $- \otimes_E S_i$, for $S_i$ the simple $E$-modules. There are relatively $B$-projective $E$-modules $U_{24}$ and $U_{35}$, with Loewy series
	\[
	U_{24} \; = \; \begin{matrix}
		2 \;\;\; 4 \\
		3 \\
		2 \;\;\; 4 
	\end{matrix} \quad \text{ and } \quad U_{35} \; = \; \begin{matrix}
		3 \;\;\; 5 \\
		4 \\
		3 \;\;\; 5
	\end{matrix} \; .  
	\]
	The projective $E$-modules $\overbar{P}_2$ and $\overbar{P}_5$ are relatively $C$-projective, as are the modules $V_3$ and $V_4$, with Loewy series 
	\[
	V_3 \; = \; \begin{matrix}
		3 \\
		2 \\
		3 
	\end{matrix} \quad \text{ and } \quad V_4 \; = \; \begin{matrix}
		4 \\
		5 \\
		4
	\end{matrix} \; .  
	\]
	
	Applying the functors $- \otimes_E S_i$, we obtain exact sequences
	\begin{center}
		\begin{tikzcd}[column sep=small]
			0 \arrow[r] & S_5 \arrow[r] & \overbar{P}_5 \arrow[r] & \overbar{P}_4 \arrow[r] & U_{24} \arrow[r] & S_2 \arrow[r] & 0,
		\end{tikzcd}
	\end{center}
	\begin{center}
		\begin{tikzcd}[sep=small]
			0 \arrow[r] & S_4 \arrow[r] & V_4 \arrow[r] & \overbar{P}_5 \arrow[r] & U_{35} \arrow[r] & S_3 \arrow[r] & 0,
		\end{tikzcd}
	\end{center}
	\begin{center}
		\begin{tikzcd}[sep=small]
			0 \arrow[r] & S_3 \arrow[r] & V_3 \arrow[r] & \overbar{P}_2 \arrow[r] & U_{24} \arrow[r] & S_4 \arrow[r] & 0, 
		\end{tikzcd}
	\end{center}
	\begin{center}
		\begin{tikzcd}[sep=small]
			0 \arrow[r] & S_2 \arrow[r] & \overbar{P}_2 \arrow[r] & \overbar{P}_3 \arrow[r] & U_{35} \arrow[r] & S_5 \arrow[r] & 0.
		\end{tikzcd}
	\end{center}
	Thus, the above sequence of $E$-$E$-bimodules is exact. This gives rise to a triangle
	\begin{center}
		\begin{tikzcd}[sep=small]
			Y \arrow[r] & E \arrow[r,"\alpha"] & {}_{\sigma\!}E[3] \arrow[r,rightsquigarrow] & {}
		\end{tikzcd}
	\end{center}
	in $\bbimder{E}{E}$, with $Y$ perfect in $D^b(E)$ and $D^b(\opcat{E})$ by definition. This defines an element $\alpha \in \Ext^3_{E \otimes_k \opcat{E}}(E,{}_{\sigma\!}E)$. Since $E$ is projective as a left and a right $B$-module, $C$-module and $D$-module, we have that $Y \ltens_E M$ is perfect in $D^b(E)$ and $M^\vee \ltens_E Y$ is perfect in $D^b(\opcat{E})$. Noting that $\sigma^{-1} = \sigma$, both $M$ and $M^\vee$ are thus strongly $\sigma$-periodic of period 3, relative to $\alpha$. Thus, by Theorem \ref{thm:periodictoperverse} we have a generalised periodic twist \begin{tikzcd}[cramped, sep=small]
		\Phi_P: D^b(A) \arrow[r,"\sim"] & D^b(A),
	\end{tikzcd} self-perverse relative to the filtration $\emptyset \subset_0 \{1\} \subset_3 I$.
	
	We comment that clever manipulation of a number of previously known equivalences will produce an autoequivalence of $B_{(2),2)}$ with the same perversity data as $\Phi_P$.
	
	Let \begin{tikzcd}[cramped, sep=small]
		\Phi_1:D^b(B_{(2),2}) \arrow[r,"\sim"] & D^b(B_{(3,1^2),2})
	\end{tikzcd} be the equivalence of Craven and Rouquier realising Brou\'{e}'s abelian defect group conjecture in \cite[Section 5.5.3]{craven-rouquier_perverse}, where $B_{(3,1^2),2}$ is the weight 2 Rouquier block, Morita equivalent in this case to the Brauer correspondent $A_{2,1} \wr \mathfrak{S}_2$ by \cite[Theorem 2]{chuang-kessar_badgc} and the exceptional coincidence that the Brauer tree algebras of a star and a line on two edges with exceptional multiplicity 1.
	
	Next, let \begin{tikzcd}[cramped, sep=small]
		\Phi_2:D^b(B_{(3,1^2),2}) \arrow[r,"\sim"] & D^b(B_{(3,1^2),2})
	\end{tikzcd} be the equivalence of \cite[Example 5.7]{marcus_equiv} lifting the line-to-star derived equivalence for Brauer tree algebras of \cite[Theorem 4.2]{rickard_derivedstable}.
	
	Finally, let \begin{tikzcd}[cramped, sep=small]
		\Phi_3: D^b(B_{(3,1^2),2}) \arrow[r,"\sim"] & D^b(B_{(2),2})
	\end{tikzcd} be the equivalence of \cite[Theorem 7.2]{chuang-rouquier_sl2cat} arising from the $[2:1]$ pair between $B_{(3,1^2),2}$ and $B_{(2),2}$. The equivalence 
	\begin{center}
		\begin{tikzcd}[sep=small]
			\Phi_3 \circ \Phi_2^{-1} \circ \Phi_1: D^b(B_{(2)}) \arrow[r,"\sim"] & D^b(B_{(2)})
		\end{tikzcd}
	\end{center}
	is a self-perverse equivalence, relative to the filtration $\emptyset \subset_0 \{1\} \subset_3 I$.
	
	Again, the appearance of the subalgebra $B$ and the constituent of $C$  isomorphic to the Brauer tree algebra $A_{2,1}$ is highly suggestive of the functor $\Phi_{P}$ masking some restriction and induction functors to some weight one blocks. We propose that this is related to the self-stable equivalences of the Brauer correspondent block described by Craven and Rouquier in \cite[Section 5.5]{craven-rouquier_perverse}, though it is not entirely clear precisely how.

	\emergencystretch=2em

	\printbibliography

\end{document}

\typeout{get arXiv to do 4 passes: Label(s) may have changed. Rerun}